\newcommand{\Q}{\mathbb{Q}}
\newcommand{\T}{\mathbb{T}}
\newcommand{\Y}{\mathbb{Y}}
\newcommand{\Tr}{\mathrm{Tr}}
\newcommand{\sign}{\mathrm{sign}}
\newcommand{\GL}{\mathrm{GL}}
\newcommand{\cok}{\mathfrak{cok}}
\newcommand{\Ker}{\operatorname{Ker}}
\renewcommand{\Im}{\operatorname{Im}}
\newcommand{\ES}{{\sf ES}}
\renewcommand{\H}{\mathcal{H}}
\newcommand{\Sp}{\mathrm{Sp}}
\newcommand{\h}{\mathfrak{h}}
\newcommand{\ord}{\textrm{ord}}
\newcommand{\br}{\textrm{br}}
\newcommand{\m}{\mathfrak{m}}
\newtheorem{theorem}{Theorem}[section]
\newtheorem{lemma}[theorem]{Lemma}
\newtheorem{proposition}[theorem]{Proposition}
\theoremstyle{definition}
\newtheorem{definition}[theorem]{Definition}
\theoremstyle{remark}
\newtheorem{remark}[theorem]{Remark}
\numberwithin{equation}{section}
\newcommand{\thetagraph}[5]{%
\begin{tikzpicture}[scale=0.5, baseline={(0,0.3)},show background rectangle,
background rectangle/.style={fill=none},
        inner frame sep=1mm]
\draw (2.35,0) to[bend right=30] (2.5,0.15)--(2.5,1.35) to[bend right=30] (2.35,1.5) -- (0.5,1.5);
\draw (-2.35,0) to[bend left=30] (-2.5,0.15) -- (-2.5,1.35) to[bend left=30] (-2.35,1.5)-- (-0.5,1.5);
\draw (-2.35,0)--(2.35,0);
\draw (1.5,0) -- (1.5,0.85) to[bend right=20] (1.35,1) -- (0.5,1);
\draw (-1.5,0)-- (-1.5,0.85) to[bend left=20] (-1.35,1) --(-0.5,1);
\draw [red, very thick, dotted,   ->-={.5}{red}] (0.5,1.5)--(-0.5,1.5);
\draw [red, very thick, dotted,  ->-={.5}{red}] (0.5,1)--(-0.5,1);
\ifnum#1>0
\foreach \x in {1,...,#1}
\draw (-2+#1*0.075-\x*0.15+0.075,0) -- (-2+#1*0.075-\x*0.15+0.075,-0.5);
\fi
\ifnum#2>0
\foreach \x in {1,...,#2}
\draw (-1+#2*0.075-\x*0.15+0.075,1) -- (-1+#2*0.075-\x*0.15+0.075,1-0.5);
\fi
\ifnum#3>0
\foreach \x in {1,...,#3}
\draw (0-#3*0.075+\x*0.15-0.075,0) -- (0-#3*0.075+\x*0.15-0.075,-0.5);
\fi
\ifnum#4>0
\foreach \x in {1,...,#4}
\draw (1-#4*0.075+\x*0.15-0.075,1) -- (1-#4*0.075+\x*0.15-0.075,1-0.5);
\fi
\ifnum#5>0
\foreach \x in {1,...,#5}
\draw (2-#5*0.075+\x*0.15-0.075,0) -- (2-#5*0.075+\x*0.15-0.075,-0.5);
\fi
\end{tikzpicture}
}
\newcommand{\kushii}[1]
{\hspace*{-0.9cm}
\begin{tikzpicture}[domain=-3:4,scale=0.45, baseline={(0,0.4)}]
\draw (-3,0)--(4,0);
\draw (-2,0)--(-2,1) coordinate (P2);
\draw (-1,0)--(-1,1) coordinate(P3);
\draw (0,0)--(0,1) coordinate(P4);
\draw (1,0)--(1,1) coordinate (P5);
\draw (2,0)--(2,1)coordinate(P6);
\draw (3,0)--(3,1)coordinate(P7);
\ifnum#1=1
\draw[red, very thick, dotted,   ->-={.5}{red}] (-3,0) to [out=200,in=-20] (4,0);
\else
\draw[blue, very thick, dashed, ->-={.5}{blue}] (-3,0) to [out=200,in=-20] (4,0);
\fi
\ifnum#1=2
\draw[red, very thick, dotted, -<-={.5}{red}] (P2) to [out=80,in=100] (P7);
\else
\draw[blue, very thick, dashed, -<-={.5}{blue}] (P2) to [out=80,in=100] (P7);
\fi
\ifnum#1=3
\draw[red, very thick, dotted, -<-={.5}{red}] (P3) to [out=80,in=100] (P6);
\else
\draw[blue, very thick, dashed, -<-={.5}{blue}] (P3) to [out=80,in=100] (P6);
\fi
\ifnum#1=4
\draw[red, very thick, dotted, -<-={.5}{red}] (P4) to [out=80,in=100] (P5);
\else
\draw[blue, very thick, dashed, -<-={.5}{blue}] (P4) to [out=80,in=100] (P5);
\fi
\end{tikzpicture}
\hspace*{-0.9cm}}
\newcommand{\kushiitwo}[2]
{\hspace*{-0.9cm}
\begin{tikzpicture}[domain=-3:4,scale=0.45, baseline={(0,0.4)}]
\draw (-3,0)--(4,0);
\draw (-2,0)--(-2,1) coordinate (P2);
\draw (-1,0)--(-1,1) coordinate(P3);
\draw (0,0)--(0,1) coordinate(P4);
\draw (1,0)--(1,1) coordinate (P5);
\draw (2,0)--(2,1)coordinate(P6);
\draw (3,0)--(3,1)coordinate(P7);
\ifnum#1=1
\draw[red, very thick, dotted, ->-={.5}{red}] (-3,0) to [out=200,in=-20] (4,0);
\else 
\draw[blue, very thick, dashed, ->-={.5}{blue}] (-3,0) to [out=200,in=-20] (4,0);
\fi
\ifnum#1=2
\draw[red, very thick, dotted, -<-={.5}{red}] (P2) to [out=80,in=100] (P7);
\else \ifnum#2=2
\draw[red, very thick, dotted, -<-={.5}{red}] (P2) to [out=80,in=100] (P7);
\else
\draw[blue, very thick, dashed, -<-={.5}{blue}] (P2) to [out=80,in=100] (P7);
\fi\fi
\ifnum#1=3
\draw[red, very thick, dotted, -<-={.5}{red}] (P3) to [out=80,in=100] (P6);
\else \ifnum#2=3
\draw[red, very thick, dotted, -<-={.5}{red}] (P3) to [out=80,in=100] (P6);
\else
\draw[blue, very thick, dashed, -<-={.5}{blue}] (P3) to [out=80,in=100] (P6);
\fi\fi
\ifnum#1=4
\draw[red, very thick, dotted, -<-={.5}{red}] (P4) to [out=80,in=100] (P5);
\else \ifnum#2=4
\draw[red, very thick, dotted, -<-={.5}{red}] (P4) to [out=80,in=100] (P5);
\else
\draw[blue, very thick, dashed, -<-={.5}{blue}] (P4) to [out=80,in=100] (P5);
\fi\fi
\end{tikzpicture}
\hspace*{-0.9cm}}
\newcommand{\kushiii}
{\hspace*{-0.9cm}
\begin{tikzpicture}[domain=-3:4,scale=0.5, baseline={(0,0.2)}]
\draw (-3,0)--(4,0);
\draw (-2,0)-- (-2,1) coordinate (P2);
\draw (-1,0)--(-1,1) coordinate(P3);
\draw (0,0)--(0,1) coordinate(P4);
\draw (1,0)--(1,1) coordinate (P5);
\draw (2,0)--(2,1)coordinate(P6);
\draw (3,0)--(3,1)coordinate(P7);
\draw[blue, very thick, dashed, ->-={.5}{blue}] (-3,0) to [out=200,in=-20] (4,0);
\draw[blue, very thick, dashed, -<-={.15}{blue}] (P2) to [out=40,in=140] (P5);
\draw[blue, very thick, dashed, -<-={.5}{blue}] (P3) to [out=90,in=90] (P6);
\draw[blue, very thick, dashed, -<-={.85}{blue}] (P4) to [out=40,in=140] (P7);
\end{tikzpicture}
\hspace*{-0.9cm}}
\newcommand{\kushiiitwo}[2]
{\hspace*{-0.9cm}
\begin{tikzpicture}[domain=-3:4,scale=0.45, baseline={(0,0.4)}]
\draw (-3,0)--(4,0);
\draw (-2,0)--(-2,1) coordinate (P2);
\draw (-1,0)--(-1,1) coordinate(P3);
\draw (0,0)--(0,1) coordinate(P4);
\draw (1,0)--(1,1) coordinate (P5);
\draw (2,0)--(2,1)coordinate(P6);
\draw (3,0)--(3,1)coordinate(P7);
\ifnum#1=1
\draw[red, very thick, dotted, ->-={.5}{red}] (-3,0) to [out=200,in=-20] (4,0);
\else 
\draw[blue, very thick, dashed, ->-={.5}{blue}] (-3,0) to [out=200,in=-20] (4,0);
\fi
\ifnum#1=2
\draw[red, very thick, dotted, -<-={.15}{red}] (P2) to [out=40,in=140] (P5);
\else \ifnum#2=2
\draw[red, very thick, dotted, -<-={.15}{red}] (P2) to [out=40,in=140] (P5);
\else
\draw[blue, very thick, dashed, -<-={.15}{blue}] (P2) to [out=40,in=140] (P5);
\fi\fi
\ifnum#1=3
\draw[red, very thick, dotted, -<-={.5}{red}] (P3) to [out=90,in=90] (P6);
\else \ifnum#2=3
\draw[red, very thick, dotted, -<-={.5}{red}] (P3) to [out=90,in=90] (P6);
\else
\draw[blue, very thick, dashed, -<-={.5}{blue}] (P3) to [out=90,in=90] (P6);
\fi\fi
\ifnum#1=4
\draw[red, very thick, dotted, -<-={.85}{red}] (P4) to [out=40,in=140] (P7);
\else \ifnum#2=4
\draw[red, very thick, dotted, -<-={.85}{red}] (P4) to [out=40,in=140] (P7);
\else
\draw[blue, very thick, dashed, -<-={.85}{blue}] (P4) to [out=40,in=140] (P7);
\fi\fi
\end{tikzpicture}
\hspace*{-0.9cm}}
\newcommand{\newbigtheta}[1]{
\ifnum#1=1
\draw (0,6) -- (0,2);
\draw (0,-6) -- (0,-2);
\draw[red, very thick, dotted, ->-]  (0,-2) -- (0,2);
\draw[red, very thick, dotted, ->-] (9,-2) -- (9,2); 
\draw (-9,2) -- (-9,-2); 
\draw (9,2) to (9,5) to [bend right=30] (8,6) to (-8,6) to[bend right=30] (-9,5) to (-9,2);
\draw (9,-2) to (9,-5) to [bend left=30] (8,-6) to (-8,-6) to[bend left=30] (-9,-5) to (-9,-2);
\fi 
\ifnum#1=2
\draw (0,6) -- (0,-6);
\draw[red, very thick, dotted, ->-] (9,-2) -- (9,2); 
\draw[red, very thick, dotted, ->-]  (-9,-2) -- (-9,2);
\draw (9,2) to (9,5) to [bend right=30] (8,6) to (-8,6) to[bend right=30] (-9,5) to (-9,2);
\draw (9,-2) to (9,-5) to [bend left=30] (8,-6) to (-8,-6) to[bend left=30] (-9,-5) to (-9,-2);
\fi
\ifnum#1=3
\draw (0,6) -- (0,2);
\draw (0,-6) -- (0,-2);
\draw[red, very thick, dotted, ->-]  (0,-2) -- (0,2);
\draw[red, very thick, dotted, ->-] (-9,-2) -- (-9,2); 
\draw (9,2) -- (9,-2); 
\draw (9,2) to (9,5) to [bend right=30] (8,6) to (-8,6) to[bend right=30] (-9,5) to (-9,2);
\draw (9,-2) to (9,-5) to [bend left=30] (8,-6) to (-8,-6) to[bend left=30] (-9,-5) to (-9,-2);
\fi 
}
\newcommand{\newsmalltheta}[1]{
\ifnum#1=1
\draw (0,4) -- (0,2);
\draw (0,-4) -- (0,-2);
\draw[red, very thick, dotted, ->-]  (0,-2) -- (0,2);
\draw[red, very thick, dotted, ->-] (9,-2) -- (9,2); 
\draw (-9,2) -- (-9,-2); 
\draw (9,2) to (9,3) to [bend right=30] (8,4) to (-8,4) to[bend right=30] (-9,3) to (-9,2);
\draw (9,-2) to (9,-3) to [bend left=30] (8,-4) to (-8,-4) to[bend left=30] (-9,-3) to (-9,-2);
\fi 
\ifnum#1=2
\draw (0,4) -- (0,-4);
\draw[red, very thick, dotted, ->-] (9,-2) -- (9,2); 
\draw[red, very thick, dotted, ->-]  (-9,-2) -- (-9,2);
\draw (9,2) to (9,3) to [bend right=30] (8,4) to (-8,4) to[bend right=30] (-9,3) to (-9,2);
\draw (9,-2) to (9,-3) to [bend left=30] (8,-4) to (-8,-4) to[bend left=30] (-9,-3) to (-9,-2);
\fi
\ifnum#1=3
\draw (0,4) -- (0,2);
\draw (0,-4) -- (0,-2);
\draw[red, very thick, dotted, ->-]  (0,-2) -- (0,2);
\draw[red, very thick, dotted, ->-] (-9,-2) -- (-9,2); 
\draw (9,2) -- (9,-2); 
\draw (9,2) to (9,3) to [bend right=30] (8,4) to (-8,4) to[bend right=30] (-9,3) to (-9,2);
\draw (9,-2) to (9,-3) to [bend left=30] (8,-4) to (-8,-4) to[bend left=30] (-9,-3) to (-9,-2);
\fi 
}
\newcommand{\newdumbbell}{
\draw (-3,0) -- (3,0);
\draw[red, very thick, dotted, ->-] (10,-2) -- (10,2); 
\draw[red, very thick, dotted, ->-]  (-10,-2) -- (-10,2);
\draw (10,2) to (10,3) to [bend right=30] (9,4) to (4,4) to[bend right=30] (3,3) to (3,-3) to[bend right=30] (4,-4) to (9,-4) to[bend right=30] (10,-3) to (10,-2);
\draw (-10,2) to (-10,3) to [bend left=30] (-9,4) to (-4,4) to[bend left=30] (-3,3) to (-3,-3) to[bend left=30] (-4,-4) to (-9,-4) to[bend left=30] (-10,-3) to (-10,-2);
}
\tikzset{every path/.style={thick}}
\tikzset{ut/.style={line width=2.5pt}}
\tikzset{->-/.style 2 args={
    postaction={decorate},
    decoration={markings, mark=at position #1 with {\arrow[thick, #2]{>}}}
    },
    ->-/.default={0.5}{}
}
\tikzset{-<-/.style 2 args={
    postaction={decorate},
    decoration={markings, mark=at position #1 with {\arrow[thick, #2]{<}}}
    },
    -<-/.default={0.5}{}
}
\begin{document}

\title{On the $2$-loop part of the Johnson cokernel}
\author{Yusuke Kuno}
\address{
Department of Mathematics, 
Tsuda University, 
2-1-1 Tsuda-machi, Kodaira-shi, Tokyo 187-8577, 
Japan
}
\email{kunotti@tsuda.ac.jp}
\date{}

\author{Masatoshi Sato}
\address{
Department of Mathematics and Data Science,
Tokyo Denki University,
5 Senjuasahi-cho, Adachi-ku, Tokyo 120-8551,
Japan}
\email{msato@mail.dendai.ac.jp}

\makeatletter
\@namedef{subjclassname@2020}{\textup{2020} Mathematics Subject Classification}
\makeatother

\subjclass[2020]{Primary 57K20, Secondary 17B40}
\keywords{mapping class group, Johnson homomorphism, hairy Lie graph, symplectic representation}

\begin{abstract}
We study stable $\Sp$-decompositions of the cokernel of the Johnson homomorphism. 
Continuing the work of Conant in 2016, which identified the $1$-loop part of the Johnson cokernel as the Enomoto-Satoh obstruction, we study the $2$-loop part.
Using the corresponding $2$-loop trace map, we capture all the components of the Johnson cokernels in degree 6 that cannot be detected by the Enomoto-Satoh trace. 
\end{abstract}

\maketitle

\section{Introduction}

\subsection{The Johnson cokernel and its loop decomposition}

We will work over the rationals $\Q$.
Let $H$ be a symplectic vector space of dimension $2g$, and let $\mathcal{L} = \bigoplus_{n = 1}^{\infty} \mathcal{L}_n$ be the free Lie algebra generated by $H$.
The Lie algebra $\h = \bigoplus_{n=1}^{\infty} \h(n)$ of (positive) symplectic derivations on $\mathcal{L}$, also known as the Lie algebra of $H$-labeled tree-shaped Jacobi diagrams, appears in low-dimensional topology in various interesting ways.
Our viewpoint in this paper comes from one of its origins in the work of Morita~\cite{Mor93}, namely the natural target space of the Johnson homomorphism of the mapping class group of a surface (of genus $g$ with one boundary component).

Let $\mathcal{M}$ be the mapping class group of a once bordered surface of genus $g$, and let $\pi$ be the fundamental group of the surface, which is a free group of rank $2g$.
Roughly speaking, the Johnson homomorphism is a linearization of the Dehn-Nielsen-Baer isomorphism
\[
\mathcal{M} \overset{\cong}{\to} {\rm Aut}_{\partial}(\pi),
\]
where the right hand side is the group of automorphisms of $\pi$ that preserve the boundary loop. 
The linearization process first replaces the free group $\pi$ with the free Lie algebra $\mathcal{L}$, and the boundary loop in $\pi$ with the symplectic element $\omega$ in $\mathcal{L}_2$.
Then the group ${\rm Aut}_{\partial}(\pi)$ is replaced with the graded Lie algebra ${\rm Der}_{\omega}^+(\mathcal{L}) = \h$, and the mapping class group $\mathcal{M}$ is replaced with a graded Lie algebra ${\rm Lie}_{\rm gr}(\mathcal{I})$ over $\Q$ produced from a certain filtration on the Torelli subgroup $\mathcal{I}$ of $\mathcal{M}$.
The Johnson homomorphism is an injective graded Lie homomorphism
\[
\tau \colon {\rm Lie}_{\rm gr}(\mathcal{I}) \hookrightarrow \h.
\] 
Let $\m = \bigoplus_{n=1}^{\infty} \m(n)$ be the Lie subalgebra of $\h$ generated by the degree one part $\h(1)$.
By Hain's seminal result~\cite{Hai97}, this coincides with the image of the Johnson homomorphism.
Given this fact, we refer to the quotient $\h/\m$ as the Johnson cokernel.
For the precise definition of the Johnson homomorphism, see \cite{Joh83I,Joh83sur}.
See also \cite{Mor99,Mat13,KK16,Sat16,Ha20}
for developments in the subject. 

There are several reasons for studying the Johnson cokernel. 
\begin{enumerate}
\item
The basic fact is that the Johnson cokernel is nontrivial, as was first observed by Morita~\cite{Mor93}.
This suggests that the linearization process in the construction of the Johnson homomorphism may be too naive, and we might be overlooking some hidden structure on the surface interacting compatibly with the mapping class group. 
Computing the Johnson cokernel is closely related to finding such a structure and would lead to a deeper understanding of self-homeomorphisms of the surface.

\item 
Garoufalidis and Levine \cite{GL05} introduced a 3-dimensional enlargement of the mapping class group, called the homology cobordism group $\mathcal{H}$ of homology cylinders over the surface.
The mapping class group $\mathcal{M}$ is a subgroup of $\mathcal{H}$, and the filtration on the Torelli group $\mathcal{I}$ extends naturally to a filtration on the Torelli part $\mathcal{IH}$ of $\mathcal{H}$. 
This yields the associated Johnson homomorphism $\tau^{\mathcal{H}}\colon {\rm Lie}_{\rm gr}(\mathcal{IH})\to \h$. In contrast to $\tau$, the map $\tau^{\mathcal{H}}$ is an isomorphism. 
Thus, the Johnson cokernel detects the difference between the group arising from 3-dimensional topology and the mapping class group. 

\item It is known that there are components in the Johnson cokernel coming from the absolute Galois group ${\rm Gal}(\overline{\Q}/\Q)$ (see \cite{Mat13} for details).
Hence, the Johnson cokernel is a site of fruitful connection between number theory and low-dimensional topology.

\item
The Johnson cokernel is also related to a certain twisted cohomology of the outer automorphism group of a free group.
Conant and Kassabov \cite{CK16} constructed a highly non-trivial homomorphism from the Johnson cokernel $\h/\m$ to the twisted cohomology group, which is related to the graph homology of a certain cyclic operad. 
Conant \cite{Con17} also studied the abelianization $\h/[\h,\h]$ and extracted information about $\h/\m$ using the natural surjective homomorphism $\h/\m\to \h/[\h,\h]$.
\end{enumerate}

It has been a long-standing problem to characterize $\h/\m$.
In particular, for each $n$, the degree $n$ part $\cok(n):= \h(n)/\m(n)$ is naturally a polynomial representation space of the symplectic group $\Sp = \Sp(2g;\Q)$, and people have been working on its $\Sp$-irreducible decomposition.
In low degrees, the decomposition was given in \cite{Joh83I, Mor89, AN95, Hai97, Mor99, MSS15AM}. 
It is known that $\cok(n)$ decomposes as
\begin{equation} \label{eq:cok_decomp}
\cok(n) = \cok_{n, \{ n \} } \oplus \cok_{n, \{ n-2 \} } \oplus \cok_{n, \{ n-4 \} } \oplus \cdots,
\end{equation}
where $\cok_{n, \{ k \} }$ is a direct sum of irreducible $\Sp$-submodules of $\cok(n)$ corresponding to Young diagrams of size $k$.
In a stable range, a more explicit description was given by Conant~\cite{Con16}.
Using the hairy Lie graph complex~\cite{CKV13, CKV15}, he introduced the spaces
\[
\widetilde{\Omega}_{r, \langle n+2-2r \rangle} = \H_{1,n,r} \langle V \rangle/ \beta^{n-1}(\H_{n,n,r}^{\ord} \langle V \rangle). 
\]
Here, we follow Conant's notation on the right hand side.
Let $H^{\langle m \rangle}$ be the intersection of the kernels of the pairwise contractions $H^{\otimes m} \to H^{\otimes (m-2)}$ by the symplectic form. 
Briefly, the space $\widetilde{\Omega}_{r, \langle n+2-2r \rangle}$ is generated by tree-shaped Jacobi diagrams of degree $n$ whose $2r$ univalent vertices are paired by $r$ dotted edges, and the remaining $n+2-2r$ univalent vertices are colored with tensors in $H^{\langle n+2-2r \rangle}$.
Below is an example for the case $(r,n) = (2,8)$:  
\[
\begin{tikzpicture}[baseline=-3pt, x=2mm, y=2mm]
\draw (0,4) -- (0,-4);  
\draw[red, very thick, dotted, -<-] (10,2) -- (10,-2);   
\draw[red, very thick, dotted, ->-] (-4,2) -- (-4,-2); 
\draw (0,4) to (-3,4) to[bend right=30] (-4,3) to (-4,2);
\draw (0,-4) to (-3,-4) to[bend left=30] (-4,-3) to (-4,-2);
\draw (0,4) to (9,4) to[bend left=30] (10,3) to (10,2);
\draw (0,-4) to (9,-4) to[bend right=30] (10,-3) to (10,-2);
\draw (-2,4) -- (-2,2.5) node[below=-3pt]{$a_1$};
\draw (0,-1) -- (7,-1) node[right=-3pt]{$b_4$};
\draw (2,-1) -- (2,0.5) node[above=-3pt]{$a_2$};
\draw (5,-1) -- (5,0.5) node[above=-3pt]{$a_3$};
\draw (3,-4) -- (3,-5.5) node[below=-3pt]{$a_5$};
\draw (7,-4) -- (7,-5.5) node[below=-5pt]{$b_6$};
\end{tikzpicture} \ . 
\]
Here, we fix a symplectic basis $\{ a_i, b_i \}_{i=1}^g \subset H$ and the $H$-colorings are chosen from it.
Thus the tensor $a_1 \otimes a_2 \otimes a_3 \otimes b_4 \otimes a_5 \otimes b_6$ made from the $H$-colorings lies in $H^{\langle 6 \rangle}$. 
In \cite[Theorem~2.1]{Con16},
Conant showed that, for $g \gg n$, the trace map of Conant, Kassabov and Vogtmann~\cite{CKV13} induces the $\Sp$-decomposition
\[
\cok(n)\cong \bigoplus_{r=1}^{\lfloor n/2 \rfloor + 1} 
\widetilde{\Omega}_{r, \langle n+2-2r \rangle}
= \widetilde{\Omega}_{1, \langle n \rangle} \oplus
\widetilde{\Omega}_{2, \langle n-2 \rangle} \oplus 
\widetilde{\Omega}_{3, \langle n-4 \rangle} \oplus \cdots
\]
which realizes the decomposition \eqref{eq:cok_decomp}.
By the graphical nature of its description, we call $\widetilde{\Omega}_{r, \langle n+2-2r \rangle}$ the $r$-loop part of the Johnson cokernel in degree $n$.
Moreover, he identified the $1$-loop part $\cok_{n, \{ n \} } = \widetilde{\Omega}_{1, \langle n \rangle}$ as $H^{\langle n \rangle}/D_{2n}$,
the space of top level partitions in the $\Sp$-decomposition of the dihedral coinvariants of $H^{\otimes n}$. 
Note also that in his earlier work \cite{Con15}, Conant showed that the projection onto the $1$-loop part decomposes as $\cok(n) \xrightarrow{\ES} H^{\otimes n}/D_{2n} \to H^{\langle n \rangle}/D_{2n}$, where $\ES$ is the Enomoto-Satoh trace~\cite{ES14} and the second map is the natural projection.
 
The purpose of this paper is to study the 2-loop part of the Johnson cokernel and examine it in detail in degree $6$.
In fact, we consider not only the spaces $\widetilde{\Omega}_{r, \langle n+2-2r \rangle}$ but also bigger spaces $\widetilde{\Omega}_{r, n+2-2r}$, which are defined similarly to $\widetilde{\Omega}_{r, \langle n + 2 - 2r \rangle}$ but without any restriction on $H$-coloring: the univalent vertices can be colored with any tensors in $H^{\otimes(n+2-2r)}$.
Our basic framework is the study of the following map constructed by Conant~\cite{Con16}:
\[
\cok(n)\xrightarrow{\widetilde{\Tr}}\bigoplus_{r=1}^{\lfloor n/2 \rfloor + 1} \widetilde{\Omega}_{r, n + 2 -2r}
\xrightarrow{\pi}
\bigoplus_{r=1}^{\lfloor n/2 \rfloor + 1} \widetilde{\Omega}_{r, \langle n + 2 -2r \rangle}.
\]
Here, $\widetilde{\Tr}$ is induced from the Conant-Kassabov-Vogtmann trace mentioned above.
Its $1$-loop part $\widetilde{\Tr}_1\colon\cok(n)\to \widetilde{\Omega}_{1,n}$ coincides with $\ES$.

\subsection{Main results}
\subsubsection{Description of the $2$-loop space}

Our first result is an explicit presentation of the $2$-loop space $\widetilde{\Omega}_2 = \bigoplus_{m=0}^{\infty} \widetilde{\Omega}_{2,m}$.
Let $\mathcal{T} = \bigoplus_{m=0}^{\infty} H^{\otimes m}$ be the tensor algebra generated by $H$.
We omit the symbol $\otimes$ for the multiplication of $\mathcal{T}$.
Any homogeneous element of degree $m$ can be written as a $\Q$-linear combination of pure tensors, i.e., elements of the form $u_1 u_2 \cdots u_m$, where $u_i \in H$.
The algebra $\mathcal{T}$ is equipped with the structure of a Hopf algebra, whose coproduct and antipode are given by the formula
\begin{align*}
\Delta(u_1\cdots u_m) &= \sum_{I \sqcup J = \{ 1,\ldots, m\}} u_I \otimes u_J, \\
\overline{u_1 \cdots u_m} &= (-1)^m\, u_m \cdots u_1,
\end{align*}
where we write $u_I = u_{i_1} \cdots u_{i_a}$ for $I = \{ i_1, \ldots, i_a\}$ with $i_1 < \cdots < i_a$.
We also use the Sweedler notation for the coproduct: for $u\in \mathcal{T}$, 
\begin{equation} \label{eq:Sweedler}
\Delta(u) = u' \otimes u''.
\end{equation} 
For pure tensors $t, u, v, w \in \mathcal{T}$,
let $\Theta(t,u,v,w)$ be the element in $\widetilde{\Omega}_2$ represented by the following hairy Lie graph:
\[
\Theta(t,u,v,w) = \, 
\begin{tikzpicture}[baseline=-3pt, x=2mm, y=2mm]
\draw (0,4) -- (0,-4);  
\draw[red, very thick, dotted, -<-] (10,2) -- (10,-2);   
\draw[red, very thick, dotted, -<-] (-10,2) -- (-10,-2); 
\draw (0,4) to (-9,4) to[bend right=30] (-10,3) to (-10,2);
\draw (0,-4) to (-9,-4) to[bend left=30] (-10,-3) to (-10,-2);
\draw (0,4) to (9,4) to[bend left=30] (10,3) to (10,2);
\draw (0,-4) to (9,-4) to[bend right=30] (10,-3) to (10,-2);
\draw (-8,4) -- (-8,5.5) node[above=-3pt]{$v_k$};
\draw (-2,4) -- (-2,5.5) node[above=-3pt]{$v_1$};
\draw (8,4) -- (8,5.5) node[above=-3pt]{$w_1$};
\draw (2,4) -- (2,5.5) node[above=-3pt]{$w_l$};
\draw (-8,-4) -- (-8,-5.5) node[below=-3pt]{$t_1$};
\draw (-2,-4) -- (-2,-5.5) node[below=-3pt]{$t_i$};
\draw (8,-4) -- (8,-5.5) node[below=-3pt]{$u_j$};
\draw (2,-4) -- (2,-5.5) node[below=-3pt]{$u_1$};
\draw (-5,5) node{$\cdots$}; 
\draw (5,5) node{$\cdots$}; 
\draw (-5,-5) node{$\cdots$}; 
\draw (5,-5) node{$\cdots$}; 
\end{tikzpicture} \ .
\]
Here, $t = t_1 \cdots t_i$, etc. 
Extending multi-linearly, we define $\Theta(t,u,v,w)$ for all $t,u,v,w \in \mathcal{T}$.

\begin{theorem} \label{thm:omega2_presentation}
The space $\widetilde{\Omega}_2$ is generated by elements $\Theta(t,u,v,w)$ for all $t,u,v,w \in \mathcal{T}$ subject to the following relations:
\begin{enumerate}
\item[$(i)$]
the multi-linearity about four slots of $\Theta$;

\item[$(ii)$] 
for any $t,u,v,w \in \mathcal{T}$, one has
\[
\Theta(t,u,v,w) = \Theta(\overline{v},\overline{w},\overline{t},\overline{u}) = \Theta(\overline{u},\overline{t},\overline{w},\overline{v}).
\]

\item[$(iii)$]
for any $t,u,v,w \in \mathcal{T}$ and $a \in H$,
\[
\Theta(t,u,v,wa) - \Theta(t,u,av,w)
= \Theta(t,au,v,w) - \Theta(ta,u,v,w);
\]

\item[$(iv)$]
for any $t,u,v,w \in \mathcal{T}$, 
\[
\Theta(t,u,v,w) + \Theta(\overline{t'},t''u,\overline{v'},wv'') + \Theta(tu'',\overline{u'},w''v,\overline{w'}) = 0. 
\]
\end{enumerate}
\end{theorem}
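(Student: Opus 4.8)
The plan is to identify $\widetilde{\Omega}_2$ with the abstract $\Q$-vector space $A$ presented by the symbols $\Theta(t,u,v,w)$ modulo the relations $(i)$--$(iv)$, through the tautological map $\phi\colon A\to\widetilde{\Omega}_2$ that sends each symbol to the hairy theta graph it names. Three things must be established: that the $\Theta(t,u,v,w)$ span $\widetilde{\Omega}_2$, so that $\phi$ is surjective; that $(i)$--$(iv)$ genuinely hold in $\widetilde{\Omega}_2$, so that $\phi$ is well defined; and that they generate all relations, so that $\phi$ is injective. The last point is the heart of the matter, and I would arrange the argument so that it reduces to the well-definedness of a single ``read-off'' map $\psi\colon\widetilde{\Omega}_2\to A$.

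For the first two points, recall that $\widetilde{\Omega}_2$ is the $2$-loop summand of the hairy Lie graph space: it is spanned by connected hairy graphs whose core is the theta graph, modulo antisymmetry (AS) and IHX together with the relation imposing the dotted-edge pairings. Such a graph has two trivalent core vertices joined by three edges; selecting one of them as the central edge and reading the colored hairs, in their linear order and chosen orientations, along the four arcs into which the two dotted edges divide the remaining two loops exhibits the graph as some $\Theta(t,u,v,w)$ --- after using IHX at the core vertices to clear the central edge of hairs. This gives surjectivity. Relation $(i)$ is the multilinearity of the coloring. Relation $(ii)$ is the image of the two commuting involutions generating the $\Z/2\times\Z/2$ symmetry of the theta graph that fixes the central edge (swap of the two loops, swap of the two core vertices); the antipode $\overline{(\cdot)}$ records that these symmetries reverse the orientations of the arcs they move, absorbing the AS signs. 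Relation $(iv)$ is the IHX relation at a core vertex: the three summands are the I-, H-, and X-graphs, the unshuffle coproduct $\Delta$ records that the hairs between the core vertex and a dotted edge distribute in all order-preserving ways onto the two arcs produced by the move, and $\overline{(\cdot)}$ again records the orientation reversals. Relation $(iii)$ is the rule for moving a single colored hair between the four arcs, which I expect to arise from a one-hair instance of IHX combined with the pairing relation on the dotted edges.

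The substance of the proof is injectivity. I would prove it by constructing the inverse explicitly: define $\psi$ on a hairy theta graph by choosing a central edge and reading off $\Theta(t,u,v,w)$ as above, and show that $\psi$ descends to $\widetilde{\Omega}_2$, i.e. is independent of the chosen central edge and invariant under every AS and IHX move, using only $(i)$--$(iv)$. Granting this, $\psi\circ\phi=\mathrm{id}_A$ and $\phi\circ\psi=\mathrm{id}_{\widetilde{\Omega}_2}$ are immediate. Independence of the central edge is relation $(ii)$ together with $(iv)$, which lets one exchange the central edge for a loop, and AS-invariance follows from $(ii)$ and $(iii)$.

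The main obstacle is invariance under an arbitrary IHX move, since IHX may be applied at any internal edge whereas $(iv)$ only encodes it at the two core vertices and $(iii)$ only a single-hair slide. I would dispose of this by locating the edge at which the move is applied. An IHX move in the interior of one arc, between two consecutive hairs, I expect to reduce to $(iii)$ and $(i)$ by an induction that commutes one hair at a time toward the nearest core vertex; an IHX move adjacent to a core vertex is $(iv)$ verbatim once the intervening hairs are repackaged by the coproduct. The genuinely two-loop cases --- moves that straddle a dotted edge --- are where I anticipate the real difficulty: one must verify that opening and re-closing a loop produces no relation beyond those already packaged in $(iii)$ and $(iv)$. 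I would settle this by tracking the Hopf-algebra operations ($\Delta$ and the antipode) appearing in $(iii)$ and $(iv)$ through a filtration of $\mathcal{T}^{\otimes 4}$ by total hair number, with the single-hair relation $(iii)$ as the base case, thereby reducing the verification to a finite, if intricate, bookkeeping.
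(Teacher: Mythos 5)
Your proposal goes wrong at the definition of $\widetilde{\Omega}_2$, and this undermines the whole injectivity scheme. The space spanned by hairy graphs with theta (or dumbbell) core modulo multi-linearity, AS, IHX and the dotted-edge sign rule is $C_{1,2}\H$, not $\widetilde{\Omega}_2$: by construction $\widetilde{\Omega}_2$ is the further quotient of $C_{1,2}\H$ by the images of the gluing maps $\beta^{n-1}\colon C_{n,n+1}\H^{\ord}(n)\to C_{1,2}\H(n)$, and these images are relations over and above AS and IHX. In particular your identification of relation $(iv)$ as ``the IHX relation at a core vertex'' is incorrect: the three terms of $(iv)$ are theta graphs whose two dotted edges occupy the three distinct pairs of strands of the core, and their sum is a \emph{nonzero} element of $C_{1,2}\H$; IHX at a core vertex instead produces, after the intervening hairs are repackaged, the handle-balance relation $(iii)$ (there your guess is essentially right). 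The real content of the theorem is the paper's Theorem~\ref{thm:omega2}, the identification $\Im \beta^{n-1} = R$ where $R$ is spanned by the generalized Conant's third relation: one inclusion requires a case analysis of self-loop and backward dotted edges on the ordered tripods (cases (I), (II-i)--(II-iii), supported by Lemmas~\ref{lem:elementsR} and~\ref{lem:elementsR2}), and the reverse inclusion --- which is needed even to know that $(iv)$ holds in $\widetilde{\Omega}_2$ at all --- requires realizing each $\Theta_R(t,u,v,w)$ explicitly as a $\beta^{n-1}$-image of a chain of tripods, by induction on $\deg v + \deg w$. Your plan contains neither direction.

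Concretely, the strategy as written is self-defeating: if your read-off map $\psi$ were shown invariant under all AS and IHX moves using only $(i)$--$(iv)$, you would in effect have proved $A\cong C_{1,2}\H$, which is false --- in Lie degree $6$ the multiplicity of $[1^4]_{\GL}$ is $6$ in $C_{1,2}\H(6)$ but only $2$ in $\widetilde{\Omega}_{2,4}$ (Proposition~\ref{prop:GL_table}), so $(iv)$ is a genuine extra relation that cannot emerge from AS/IHX bookkeeping; conversely, for $\psi$ to descend to $\widetilde{\Omega}_2$ you must additionally show that it annihilates $\Im\beta^{n-1}$, a step your outline never addresses because that quotient is missing from your description of the space. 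Two smaller points: the strand carrying no dotted edge is determined by the graph, so there is no ``choice of central edge'' for $\psi$ to be independent of; and the dumbbell-core graphs also span part of $C_{1,2}\H$ and must be dealt with --- the paper eliminates them via the core change relation $\mathcal{D}(t,u,v,w)=\Theta(t,u,v,w)+\Theta(\overline{v},u,\overline{t},w)$ (an IHX applied to the handle) and then verifies that the dumbbell symmetry and handle-balance relations are consequences of the theta ones. Your surjectivity argument and the verifications of $(i)$--$(iii)$ do match the paper's normal-form reduction in Section~\ref{subsec:C12H}, but the heart of the proof is absent.
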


\subsubsection{The Johnson cokernel in degree $6$}

Next, we study the degree $6$ part of the Johnson cokernel.
As explained below, this is the smallest degree such that $\widetilde{\Tr}_1 = \ES$ is not injective.
Since the composition
\[
\cok(6) \xrightarrow{\widetilde{\Tr}}
\bigoplus_{r=1}^4 \widetilde{\Omega}_{r,8-2r}
\to 
\bigoplus_{r=1}^4 \widetilde{\Omega}_{r,\langle 8-2r \rangle}
\]
is an isomorphism, $\widetilde{\Tr} = \bigoplus_{r=1}^4 \widetilde{\Tr}_r$ is injective. 
We will show the following: 

\begin{theorem}\label{thm:main2}
When $g$ is sufficiently large, the $\Sp$-homomorphism
\[
\widetilde{\Tr}_1 \oplus \widetilde{\Tr}_2 \colon \cok(6) \to \widetilde{\Omega}_{1,6}\oplus \widetilde{\Omega}_{2,4}
\]
is injective.
\end{theorem}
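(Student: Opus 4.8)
The plan is to make the harmonic (top-level) parts transparent via the global isomorphism $\pi\circ\widetilde{\Tr}$, reduce to the higher-loop summands, and then detect those summands through the symplectic-trace (non-harmonic) components of $\widetilde{\Tr}_2$. First I would use $\pi\circ\widetilde{\Tr}\colon\cok(6)\xrightarrow{\sim}\bigoplus_{r=1}^4\widetilde{\Omega}_{r,\langle 8-2r\rangle}$ to identify $\cok(6)$ with $A_1\oplus A_2\oplus A_3\oplus A_4$, where $A_r=\widetilde{\Omega}_{r,\langle 8-2r\rangle}$ and, under this identification, each $\pi\circ\widetilde{\Tr}_r$ is the projection onto $A_r$. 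Since the harmonic projection $\widetilde{\Omega}_{r,8-2r}\to\widetilde{\Omega}_{r,\langle 8-2r\rangle}$ carries $\widetilde{\Tr}_r$ to $\pi\circ\widetilde{\Tr}_r$, the vanishing of $\widetilde{\Tr}_1(x)$ forces the $A_1$-component of $x$ to vanish and the vanishing of $\widetilde{\Tr}_2(x)$ forces the $A_2$-component to vanish. Hence $\ker(\widetilde{\Tr}_1\oplus\widetilde{\Tr}_2)\subseteq A_3\oplus A_4$, and it remains to show that $\widetilde{\Tr}_2$ is injective on $A_3\oplus A_4$.

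The key point is that the trace components of the full $2$-loop space see higher loop order. Decomposing the coloring space $H^{\otimes 4}$ of $\widetilde{\Omega}_{2,4}$ under $\Sp$ produces, besides the top piece $H^{\langle 4\rangle}$, single- and double-contraction pieces; graphically, contracting a pair of colored legs by the symplectic form joins them by a dotted edge and raises the loop number by one. This yields $\Sp$-equivariant contraction maps $c_1\colon\widetilde{\Omega}_{2,4}\to\widetilde{\Omega}_{3,\langle 2\rangle}$ and $c_2\colon\widetilde{\Omega}_{2,4}\to\widetilde{\Omega}_{4,\langle 0\rangle}$. The heart of the argument is to verify the compatibility $c_1\circ\widetilde{\Tr}_2=\lambda_1\,\pi\circ\widetilde{\Tr}_3$ and $c_2\circ\widetilde{\Tr}_2=\lambda_2\,\pi\circ\widetilde{\Tr}_4$ for nonzero scalars $\lambda_1,\lambda_2$, i.e. that contracting the full $2$-loop trace reproduces the restricted higher traces. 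Granting this, for $x=x_3+x_4\in A_3\oplus A_4$ one gets $c_1\widetilde{\Tr}_2(x)=\lambda_1 x_3$ and $c_2\widetilde{\Tr}_2(x)=\lambda_2 x_4$ because $\pi\circ\widetilde{\Tr}_3$ and $\pi\circ\widetilde{\Tr}_4$ are the projections onto $A_3$ and $A_4$; thus $\widetilde{\Tr}_2(x)=0$ implies $x_3=x_4=0$. The loop-grading separates the two summands automatically, so no interference occurs. This also explains conceptually why $\widetilde{\Tr}_1=\ES$ cannot be dropped: contraction only \emph{raises} loop order, so the $1$-loop part $A_1$ is invisible to $\widetilde{\Tr}_2$ and must be captured by $\widetilde{\Tr}_1$.

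The compatibility in the previous paragraph is the main obstacle: it requires unwinding the combinatorial definition of the Conant--Kassabov--Vogtmann trace inducing $\widetilde{\Tr}$, checking that the symplectic contraction of $\widetilde{\Tr}_2$ matches the higher trace, and controlling (and showing the vanishing or irrelevance of) any lower-order correction terms so that $\lambda_1,\lambda_2\neq 0$. Concretely, I would carry this out in degree $6$ using the explicit $\Theta$-presentation of Theorem~\ref{thm:omega2_presentation}: I would decompose $A_3=\widetilde{\Omega}_{3,\langle 2\rangle}$ and $A_4=\widetilde{\Omega}_{4,\langle 0\rangle}$ into $\Sp$-irreducibles, choose a highest-weight vector for each occurring type, and compute its image under $\widetilde{\Tr}_2$ at the level of $\Theta(t,u,v,w)$, verifying that the relevant contraction $c_1$ or $c_2$ is nonzero on it; by Schur's lemma non-vanishing on each irreducible upgrades to injectivity. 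The closed $4$-loop summand $A_4$, which consists of $\Sp$-invariants with no free legs, requires the most care, since it is reached only after the full double contraction to scalars; there I would check directly that the double-trace component of $\widetilde{\Tr}_2$ separates the invariant classes contributing to $A_4$.
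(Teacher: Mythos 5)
Your first paragraph is fine, and your guiding intuition (the lower-partition components of $\widetilde{\Omega}_{2,4}$ play the role of the $3$- and $4$-loop top pieces) is sound; but the proof has a genuine gap exactly where you locate it, and it is not a routine verification. The maps $c_1$ and $c_2$ are not known to exist on $\widetilde{\Omega}_{2,4}$. The naive contraction (join two colored legs by a dotted edge, weighted by the symplectic pairing) does satisfy $\mathrm{contr}\circ\Tr_2=3\,\Tr_3$ at the level of $C_{1,2}\H(6)\to C_{1,3}\H(6)$, but it does not obviously descend to the quotient $\widetilde{\Omega}_{2,4}=\T_{2,4}/\beta^{5}(\Y^{\ord}_{7,4})$, because contraction fails to commute with $\beta$: adding a dotted edge $e$ to $X\in\Y^{\ord}_{7,4}$ and applying $\beta^{5}$ produces extra terms in which $e$ itself has been turned into a solid edge, and these are not visibly in $\beta^{5}(\Y^{\ord}_{8,2})$ nor killed by the top projection. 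Equivalently, in terms of Theorem~\ref{thm:omega2_presentation} you would have to check that $c_1,c_2$ annihilate relation (iv) (the (GC3) relation), which your outline never does. The paper itself signals that this is delicate: the analogous well-defined map $\Phi\colon\Omega_1\to\Omega_2$ of Proposition~\ref{prop:factorthrough} is \emph{not} the naive ``add an edge'' map --- Remark~\ref{rem:aboutPhi} shows it requires correction terms obtained by sliding the old dotted edge across the new one, with additional terms for dumbbell cores --- and the authors explicitly state they do not know whether $\Tr^{\rm C}_r$ for $r\ge3$ factors through $\Tr^{\rm C}_1$, a statement of precisely the same kind as your compatibility identities. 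So ``granting this'' conceals the whole difficulty; note also that your route, if completed, would prove the strictly stronger statement that $\pi\widetilde{\Tr}_1\oplus\widetilde{\Tr}_2$ is injective (you only ever use the top part of $\widetilde{\Tr}_1$), which the paper does not claim.

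The paper's actual proof sidesteps all of this by importing Theorem~\ref{thm:cok(6)} (Morita--Sakasai--Suzuki): $(\h(6)\cap\Ker\widetilde{\Tr}_1)/\m(6)\cong[1^4]_\Sp+[1^2]_\Sp+[0]_\Sp$, each irreducible occurring with multiplicity one. Injectivity of $\widetilde{\Tr}_1\oplus\widetilde{\Tr}_2$ then reduces to exhibiting these three components in $\Im\bigl(\widetilde{\Tr}_2\colon\h(6)\cap\Ker\widetilde{\Tr}_1\to\widetilde{\Omega}_2(6)\bigr)$ (Proposition~\ref{prop:14120}), since multiplicity one turns nonvanishing into injectivity on the kernel. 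The detection is carried out not with maps to higher-loop spaces but with a $\GL$-homomorphism to $\Lambda^4H$: Proposition~\ref{prop:dim6} determines the $7$-parameter family of maps $C_{1,2}\H(6)\to\Lambda^4H$, Lemma~\ref{lem:3r=2t} pins down which of them factor through $\widetilde{\Omega}_2(6)$ (namely $p=-q=u=v$ and $3r=2t$) --- this is the paper's substitute for your unproven well-definedness --- and explicit elements $X_S,X_{S'},X_{S''}$, built as sums over symplectic pairs, are shown to lie in $\Ker\widetilde{\Tr}_1$ and to have nonzero images under $\pi_{1,-1,0,-1,0,1}\circ\widetilde{\Tr}_2$ followed by contractions to $\Q$ and $H^{\otimes2}$. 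To salvage your approach you would need either to construct $c_1,c_2$ honestly (presumably with correction terms as in Remark~\ref{rem:aboutPhi}) or to replace them, as the paper does, by explicitly verified detectors on $\widetilde{\Omega}_2(6)$ together with the known degree-$6$ computation of $\Ker\ES$ limiting what must be detected.
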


The $\Sp$-module $\cok(n)$ is described in  Morita, Sakasai and Suzuki \cite[Theorem~1.7]{MSS15AM} for $n\le6$,
where $\m$ coincides with the direct sum of $\Im\tau_g$ and $\mathcal{L}_g$ in Table~1 therein. 
See also Asada~\cite[Lemma~6(i)]{Asa96}.
For the $\Sp$-irreducible decomposition of $\mathcal{L}_g$, see \cite[Theorem~4.3(ii)]{MSS15AM}.
Up to trivial $\Sp$-representations, $\Im\tau_g$ is calculated as follows.
Let $\mathfrak{t}_g$ be the associated graded of the lower central series of the Torelli group for a closed surface, tensored with $\Q$.
Under the assumption that $\mathfrak{t}_g$ is Koszul in a stable range,
Garoufalidis and Getzler \cite[Theorem~1.3]{GaGe17} have computed the stable character of $\mathfrak{t}_g$ as an $\Sp$-representation.
Kupers and Randal-Williams~\cite[Theorem~3.3]{KuRW20} and Felder, Naef and Willwacher~\cite[Corollary~7]{FNW23} showed that $\mathfrak{t}_g$ is Koszul in a
stable range.
In \cite[Theorem~B]{KuRW20}, it was shown that $\mathfrak{t}_g$ is isomorphic to $\Im\tau_g$ modulo trivial $\Sp$-representations.

As in \cite[Section~7.5]{ES14} and \cite{Sak17},
$\widetilde{\Tr}_1 = \ES\colon \cok(n)\to\widetilde{\Omega}_{1,n}$ is injective when $n\le 5$.
On the other hand, $\widetilde{\Tr}_1$ does not capture $\cok(6)$: namely $\m(6) \subsetneq \Ker \widetilde{\Tr}_1 \subset \h(6)$.
The following result is obtained by Morita, Sakasai, and Suzuki. It is partially written in \cite[Theorem~7.5(iii)]{MSS15AM}.
See also \cite{Sak17}.
\begin{theorem}[Morita, Sakasai and Suzuki] \label{thm:cok(6)}
When $g$ is sufficiently large,
\[
(\h(6) \cap \Ker\widetilde{\Tr}_1) / \m(6)\cong [1^4]_\Sp+[1^2]_\Sp+[0]_\Sp
\]
as $\Sp$-modules. 
\end{theorem}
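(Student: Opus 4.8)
The plan is first to rephrase the statement. Writing $K=\h(6)\cap\Ker\widetilde{\Tr}_1$ and using that $\widetilde{\Tr}_1$ factors through $\cok(6)$ (so $\m(6)\subseteq K$), we have $K/\m(6)=\Ker\bigl(\widetilde{\Tr}_1\colon\cok(6)\to\widetilde{\Omega}_{1,6}\bigr)$, and the task is to compute this kernel. The key structural observation is that $\widetilde{\Tr}_1=\ES$ respects the $\Sp$-isotypic (Young-diagram-size) grading. Indeed $\ES$ is $\Sp$-equivariant, and its target $\widetilde{\Omega}_{1,6}=H^{\otimes6}/D_{12}$ inherits from $H^{\otimes6}$ a size grading $\bigoplus_k T_k$ (the dihedral action commutes with the diagonal $\Sp$-action, hence preserves isotypic components). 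Since irreducibles attached to diagrams of different sizes are mutually non-isomorphic, Schur's lemma gives $\ES(\cok_{6,\{k\}})\subseteq T_k$, so that $\Ker\ES=\bigoplus_k\Ker\bigl(\ES|_{\cok_{6,\{k\}}}\bigr)$ and it suffices to treat the blocks $k=6,4,2,0$ one at a time.

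For the top block I would invoke Conant directly. By the results recalled above \cite{Con15,Con16}, the composite $\mathrm{pr}\circ\ES\colon\cok(6)\to H^{\langle6\rangle}/D_{12}$ is the projection onto the $1$-loop summand $\cok_{6,\{6\}}$, hence an isomorphism on that summand. As $T_6=H^{\langle6\rangle}/D_{12}$ is already traceless, $\mathrm{pr}$ is the identity on $T_6$, so $\ES|_{\cok_{6,\{6\}}}=\mathrm{pr}\circ\ES|_{\cok_{6,\{6\}}}$ is injective; the top block contributes nothing to the kernel.

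The content is in the three lower blocks $k\in\{4,2,0\}$. I would begin from the explicit $\Sp$-decomposition of $\cok(6)=\h(6)/\m(6)$, available from \cite[Theorem~1.7]{MSS15AM} together with the determination of $\m(6)=\Im\tau_g\oplus\mathcal{L}_g$ recalled above (Koszulness of $\mathfrak{t}_g$ via \cite{KuRW20,FNW23} and the character formula of \cite{GaGe17}). For each irreducible constituent $V\subseteq\cok_{6,\{k\}}$ with $k<6$, equivariance and Schur's lemma reduce $\ES|_V$ to its value on a single highest-weight vector: one lifts such a vector to an explicit representative in $\h(6)\subseteq H\otimes\mathcal{L}_7$, applies the single symplectic contraction defining $\ES$, and reads off the resulting class in the dihedral coinvariants $T_k\subseteq H^{\otimes6}/D_{12}$. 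Carrying this out in the sizes $k=4,2,0$ shows that the kernel within the lower blocks is exactly the constituents $[1^4]$, $[1^2]$ and $[0]$, which assembled with the top block gives $\Ker\widetilde{\Tr}_1/\m(6)\cong[1^4]_\Sp+[1^2]_\Sp+[0]_\Sp$.

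I expect this last step to be the main obstacle, for two reasons. First, it is a genuine representation-theoretic calculation with no evident conceptual shortcut singling out these three diagrams; one must exhibit highest-weight vectors for the low-size constituents and evaluate the contraction on each. Second, the computation lives in two quotients at once---modulo $\m(6)$ in the source and modulo the dihedral action $D_{12}$ in the target---so a representative that seems to survive may be a class of $\m(6)$, and each output tensor must be dihedrally symmetrized before its class is determined; mishandling either quotient corrupts the scalars. This careful bookkeeping is precisely the computation of Morita, Sakasai and Suzuki underlying \cite[Theorem~7.5(iii)]{MSS15AM}.
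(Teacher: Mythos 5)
The first thing to note is that the paper contains no proof of this statement to compare against: Theorem~\ref{thm:cok(6)} is imported, attributed to Morita, Sakasai and Suzuki, with pointers to \cite[Theorem~7.5(iii)]{MSS15AM} and \cite{Sak17}. Your structural reductions are all correct and would be a reasonable preamble to such a proof: $\m(6)\subseteq\Ker\widetilde{\Tr}_1$ so the quotient is $\Ker(\widetilde{\Tr}_1\colon\cok(6)\to\widetilde{\Omega}_{1,6})$; the dihedral action commutes with $\Sp$, so the size grading descends to $H^{\otimes 6}/D_{12}$ and $\ES$ preserves it; and injectivity on the top block $\cok_{6,\{6\}}$ follows from Conant's factorization $\mathrm{pr}\circ\ES$ being an isomorphism onto $H^{\langle 6\rangle}/D_{12}$. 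But your step for the blocks $k=4,2,0$ is where the entire content of the theorem sits, and it is not carried out: ``Carrying this out \dots shows that the kernel \dots is exactly $[1^4]$, $[1^2]$ and $[0]$'' simply restates the conclusion. In effect you have reduced the theorem to the very computation of \cite{MSS15AM} that the paper cites, which is a legitimate reduction but not a proof.

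There is also a technical flaw in how you propose to do that computation. Reducing ``$\ES|_V$ to its value on a single highest-weight vector'' for each irreducible constituent $V\subseteq\cok_{6,\{k\}}$ is not well-posed when a partition $\lambda$ occurs with multiplicity greater than one, which is the generic situation here (compare the multiplicities in Proposition~\ref{prop:GL_table}, e.g.\ the $2$-loop space in degree $6$ already carries $3[31]$, $3[2^2]$, etc.). On a $\lambda$-isotypic block of multiplicity $m$ in the source and $m'$ in the target, Schur's lemma gives an $m'\times m$ matrix of scalars, and the kernel of $\ES$ on that block is the kernel of this matrix; it can be a ``diagonal'' copy of $\lambda_\Sp$ invisible in any preferred direct-sum decomposition, so checking injectivity constituent-by-constituent can both miss kernel classes and falsely report them. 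The correct procedure is to evaluate $\ES$ on a basis of the multiplicity space of highest-weight vectors (lifted to $\h(6)$, with the caveats about the two quotients that you rightly flag) and compute the rank of the resulting matrix for each $\lambda$ of size $4$, $2$, $0$. Until those ranks are actually computed, the identification of the kernel with $[1^4]_\Sp+[1^2]_\Sp+[0]_\Sp$ remains an appeal to \cite{MSS15AM}, exactly as in the paper.
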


Theorem~\ref{thm:cok(6)} plays an important role in the proof of Theorem~\ref{thm:main2}.
\begin{remark}
The Sp-invariant part $[0]_\Sp$ on the right hand side of Theorem 1.3 essentially comes from 
the absolute Galois group $\operatorname{Gal}(\overline{\Q}/\Q)$.
See \cite{Mat96, Nak96} and a survey article \cite{Mat13} for more details.
\end{remark}

\subsubsection{Relation between two $2$-loop traces}

In \cite{Con15}, Conant introduced the graded spaces $\Omega_r = \bigoplus_{m = 0}^{\infty} \Omega_{r,m}$  by similar ideas used in the definition of $\widetilde{\Omega}_r$. 
For applications of the space $\Omega_r$, see \cite{CK16,Con17}.
Moreover, he showed that the Conant-Kassabov-Vogtmann trace induces maps $\Tr_r^{\rm C} \colon \h \to \Omega_r$ which vanish on $\m$.
In fact, the construction in \cite{Con16} can be seen as a refinement of this.
We will show in Proposition~\ref{prop:refine} that there is a natural map $\widetilde{\Omega}_r \to \Omega_r$ such that the following diagram commutes: 
\[
\xymatrix@R=0.5em@C=4em{
 & \widetilde{\Omega}_r \ar[dd] \\
\h \ar[ur]^{\widetilde{\Tr}_r} \ar[dr]_{\Tr_r^{\rm C}} & \\
 & \Omega_r.
}
\]
The $1$-loop parts of the two constructions are the same:
$\Omega_1 = \widetilde{\Omega}_1$ and 
$\Tr_1^{\rm C} = \widetilde{\Tr}_1 =\ES$.
In the 2-loop parts, there is a difference.
While $\widetilde{\Tr}_2$ does not factor through $\widetilde{\Tr}_1$, we have the following result.

\begin{proposition}[{$=$ Proposition~\ref{prop:factorthrough}}] \label{prop:facthr_short}
There is a map $\Phi\colon \Omega_1 \to \Omega_2$ such that
\[
\Phi \circ \Tr_1^{\rm C} = 3\, \Tr_2^{\rm C}.
\]
\end{proposition}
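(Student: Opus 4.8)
The plan is to construct $\Phi$ explicitly as an edge-insertion on cyclic words and then to verify the identity on a generating set by unwinding both trace maps as sums over gluings of legs; the factor $3$ will reflect the three edges of the theta graph.

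First I would write down $\Phi$. Since $\Omega_1=\bigoplus_m H^{\otimes m}/D_{2m}$ is spanned by cyclic words and the Conant--Kassabov--Vogtmann trace $\Tr_r^{\rm C}$ is computed by gluing $r$ disjoint pairs of univalent vertices of a tree-shaped Jacobi diagram via the symplectic form, the natural lift of a cyclic word to a theta graph contracts one pair of its entries and joins their positions by a new hairless edge, which becomes the central edge. On a cyclic word this reads
\[
\Phi([y_1\cdots y_m]) = \sum_{1\le a<b\le m}\omega(y_a,y_b)\,\Theta_{a,b},
\]
where $\Theta_{a,b}$ is obtained by deleting $y_a,y_b$, joining their positions by a hairless central edge, and keeping the two complementary arcs as the side edges. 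I would then check that the right-hand side is invariant under the dihedral action and satisfies relations $(i)$--$(iv)$ of Theorem~\ref{thm:omega2_presentation}, so that $\Phi$ descends to a well-defined map $\Omega_1\to\Omega_2$.

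Next I would evaluate both composites on a single tree $T$. Each of $\Tr_1^{\rm C}(T)$ and $\Tr_2^{\rm C}(T)$ is a signed sum over gluings, and a pair whose two legs meet at a common internal vertex produces a hair labelled by a Lie bracket, which vanishes after cyclic symmetrization; only the bridge-crossing gluings survive. Thus $\Tr_1^{\rm C}(T)$ ranges over bridge-crossing pairs, each yielding a cyclic word, and applying $\Phi$ closes a second pair, so $\Phi\circ\Tr_1^{\rm C}(T)$ becomes a sum over configurations of a first closed pair together with a second pair closed by $\Phi$. On the other side, $\Tr_2^{\rm C}(T)$ closes two disjoint pairs at once, producing a theta whose central edge is the bridge of $T$. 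Both sides range over the same hairy theta graphs, so the remaining task is to compare multiplicities.

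The factor $3$ is the heart of the matter and reflects the three edges of the theta. In $\Tr_2^{\rm C}(T)$ the central edge is pinned to be the bridge of $T$, whereas in $\Phi\circ\Tr_1^{\rm C}$ the space $\Omega_1$ has forgotten which arc of the loop was the closing edge of the first gluing; consequently each hairy theta is reconstituted with the inserted central edge running over all three of its edges. After applying the antipode symmetries $(ii)$ and the coproduct relation $(iv)$, these three reconstitutions are shown to coincide in $\Omega_2$, and summing them yields $\Phi\circ\Tr_1^{\rm C}=3\,\Tr_2^{\rm C}$.

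I expect the main obstacle to be precisely this multiplicity computation. One must track the signs coming from the antipode (the $(-1)^m$ in $\overline{u_1\cdots u_m}$) and from the loop orientation, and---most delicately---show that the three edge-insertions coincide in $\Omega_2$ and not merely up to sign, which is where relation $(iv)$ does the essential work. It also remains to confirm that dumbbell-type gluings and the vanishing same-vertex terms contribute nothing to the theta component, so that the multiplicity is exactly $3$.
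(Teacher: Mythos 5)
Your construction of $\Phi$ has a genuine gap, and it propagates into the multiplicity count. The single-term formula $\Phi([y_1\cdots y_m])=\sum_{a<b}\omega(y_a,y_b)\,\Theta_{a,b}$ never says where the loop's \emph{original} dotted edge sits after you cut at positions $a,b$: it lands on one of the two side arcs of the theta, and the two choices are \emph{not} equal in $\Omega_2$. More generally, the three placements of the dotted pair on the three theta edges do not coincide in $\Omega_2$: relation (C3) gives a linear dependence in which one configuration equals the \emph{sum} of the other two, not an equality of all three. (Your appeal to relations $(ii)$ and $(iv)$ of Theorem~\ref{thm:omega2_presentation} is also misplaced: that theorem presents $\widetilde{\Omega}_2$, whereas $\Omega_2$ is the quotient of $C_1\H$ by (C1)--(C3); the relevant relation here is (C3), and even its generalization (GC3) yields a three-term linear relation, never an identification of the three placements.) Consequently your $\Phi$ is not well defined on $\Omega_1$, and your mechanism for the factor $3$ --- ``the three reconstitutions coincide, so summing them gives $3$'' --- fails. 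Note moreover that $\Phi\circ\Tr_1^{\rm C}$ only ranges over the \emph{two} orders in which a given unordered pair of gluings can be performed, so even granting your (false) identification you would obtain multiplicity $2$, not $3$.

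The paper repairs exactly this point: its $\Phi$ attaches, for each contracted pair, not one diagram but the sum of the diagram with the newly inserted dotted edge \emph{together with} the diagrams obtained by sliding the original dotted edge across the new one (two terms when the core is a theta, more when it is a dumbbell; see Remark~\ref{rem:aboutPhi}). This symmetrization over slid positions is what makes $\Phi$ independent of the forgotten location of the dotted edge in $\Omega_1$. With that definition, $(\Phi\circ\Tr_1^{\rm C})(X)$ produces, for each unordered pair of disjoint gluing pairs, \emph{four} terms (two orders times two slid positions), and (C3) converts the sum of the two slid terms into one more copy of the standard theta term, giving exactly $3\,\Tr_2^{\rm C}$. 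Two further points. First, your vanishing claim (``same-vertex pairs give a Lie-bracket hair that dies after cyclic symmetrization; only bridge-crossing gluings survive'') is neither correct as stated nor needed: same-vertex gluings die by (C1), while nested gluings produce dumbbell cores, which are in general nonzero in $\Omega_2$, so they cannot simply be discarded from the comparison. Second, comparing multiplicities directly on $H$-colored diagrams runs into coincidences among colorings and degenerate pairings; the paper sidesteps this by first proving the identity over formal colors $w_p$, with coefficients in $\Lambda^2 W$ and its second symmetric power, and then evaluating (Steps 1--3 of its proof). Your outline would need both the corrected multi-term $\Phi$ and some substitute for this formalization to go through.
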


Thus, any component in the Johnson cokernel captured by $\Tr^{\rm C}_2$ is already captured by the Enomoto-Satoh trace. 

\begin{remark}
The Enomoto-Satoh trace $\ES$ admits several topological interpretations \cite{AKKN_hg, KK15AIF, KK16, MaSa20, NSS23TAMS}.
It would be interesting to give a similar topological meaning for the $2$-loop trace $\widetilde{\Tr}_2$ (and the higher traces $\widetilde{\Tr}_r$). 
\end{remark}

\subsection{Organization} \label{subsec:organization}

In Section~\ref{sec:hlg}, we review the definition of the hairy Lie graph complex and Conant's work~\cite{Con16} on the Johnson cokernel in the stable range.  
In Section~\ref{sec:2-loop}, we investigate the $2$-loop space $\widetilde{\Omega}_2$ and prove Theorem~\ref{thm:omega2_presentation}.
In Section~\ref{sec:deg6}, we focus on the degree $6$ part of the Johnson cokernel and prove Theorem~\ref{thm:main2}.
In Section~\ref{sec:Omega1_Omega_2}, we compare the spaces $\widetilde{\Omega}_r$ and $\Omega_r$ and prove Proposition~\ref{prop:facthr_short}.

\subsection{Notation} \label{subsec:notation}

\begin{itemize}
    \item 
We denote by $(\ \cdot \ )\colon H \times H \to \Q$ the symplectic form on $H$.
It is skew-symmetric and nondegenerate.
A {\em symplectic basis} of $H$ is a subset $\{ a_i, b_i \}_{i=1}^g$ of $H$ such that $(a_i\cdot b_j) = \delta_{ij}$ and $(a_i\cdot a_j) = (b_i\cdot b_j) = 0$.
    \item
We use the standard terminology for Young diagrams \cite{FH91}.
For a Young diagram $\lambda$, we denote by $\lambda_\Sp$ and $\lambda_\GL$ the corresponding $\Sp$-irreducible and $\GL$-irreducible representations, respectively, where $\Sp = \Sp(2g;\Q)$ and $\GL = \GL(2g;\Q)$.
Note that $\lambda_{\Sp}$ appears as the unique $\Sp$-irreducible component of maximal size in the $\Sp$-irreducible decomposition of $\lambda_{\GL}$.

\item 
Let $V$ be a $\GL$-representation with $\GL$-irreducible decomposition $V=\bigoplus_{i=1}^n(\lambda_i)_\GL$. 
By restriction, we can view $V$ as an $\Sp$-representation.
We call the $\Sp$-submodule $\bigoplus_{i=1}^n(\lambda_i)_\Sp\subset V$ the {\em space of top level partitions} of $V$. 

\item
For any $n \ge 0$, there is a natural $\Sp$-module decomposition
\begin{equation} \label{eq:Hn_Sp_decomp}
H^{\otimes n}=
H^{\braket{n}}\oplus H^{\braket{n}}_{\{n-2\}}\oplus\cdots \oplus H^{\braket{n}}_{\{n-2[n/2]\}}
\end{equation}
induced by the contraction maps $(\ \cdot \ )$,
where $H^{\braket{n}}_{\{l\}}$ is a direct sum of irreducible $\Sp$-submodules of $H^{\otimes n}$ corresponding to Young diagrams of size $l$.
See \cite[Lemma~17.15]{FH91}.
In particular, $H^{\langle n \rangle}$ is nothing but the space of top level partitions of $H^{\otimes n}$.
\end{itemize}

\subsection*{Acknowledgments}
The authors are grateful to Nariya Kawazumi for useful discussions in the early stage of this work.
They also thank Shigeyuki Morita, Takuya Sakasai, Masaaki Suzuki and Yuta Nozaki for their helpful suggestions and warm encouragement. 
This study is supported by JSPS KAKENHI Grant Numbers JP23K03121, JP24K00520, and JP22K03298.

\section{Hairy Lie graph complex and the Johnson cokernel} \label{sec:hlg}

\subsection{Hairy Lie graph complex} \label{subsec:hlg}

We recall the definition of the hairy Lie graph complex following \cite{CKV13, CKV15, Con15, Con16}.
\begin{definition}
A {\em hairy Lie graph} with $k$ trees is a union of $k$ unitrivalent trees equipped with the following data.
\begin{itemize}
\item The trees are numbered from $1$ to $k$.
\item Every trivalent vertex in the trees is cyclically oriented, i.e., the three half-edges incident to the vertex are endowed with a cyclic ordering.
\item There are several pairs of univalent vertices, each of which is connected by a directed edge.
We call such a directed edge a {\em dotted edge}.
\item The univalent vertices in the trees not paired by dotted edges are colored with elements of $H$.
We call such a univalent vertex a {\em leaf}.
The edge adjacent to a leaf is called a {\em hair}.
\end{itemize}
\end{definition}
We define the {\em Lie degree} of a hairy Lie graph to be the total number of trivalent vertices of the unitrivalent trees and the {\em homological degree} to be the number of leaves. 
For example, 
\[
\begin{tikzpicture}[x=2mm, y=2mm]
\draw (0,0) -- (3,0);
\draw (0,-2) -- (0,2);
\draw (3,-2) -- (3,2);
\draw (7,0) -- (9,0);
\draw (9,-2) -- (9,2);
\draw (13,0) -- (24,0);
\draw (13,-2) -- (13,2); 
\draw (16,0) -- (16,2); 
\draw (19,-2) -- (19,0); 
\draw (22,0) -- (22,2);
\draw[red, very thick, dotted, ->-] (3,-2) to (3,-2.5) to[bend right=30] (4.5,-3) to (8.5,-3) to[bend right=30] (9,-2.5) to (9,-2);
\draw[red, very thick, dotted, ->-] (0,-2) to (0,-4) to[bend right=30] (1,-5) to (18,-5) to[bend right=30] (19,-4) to (19,-2);
\draw[red, very thick, dotted, -<-] (0,2) to (0,4) to[bend left=30] (1,5) to (15,5) to[bend left=30] (16,4) to (16,2);
\draw[red, very thick, dotted, ->-] (9,2) to (9,2.5) to[bend left=30] (9.5,3) to (12.5,3) to (13,2.5) to[bend left=30] (13,2);
\draw (3,3) node{$a$};
\draw (6,0) node{$b$};
\draw (13,-3) node{$c$};
\draw (22,3) node{$d$};
\draw (25,0) node{$e$};
\draw (1.5,-7.5) node{\footnotesize ${\sf \sharp 1}$};
\draw (8,-7.5) node{\footnotesize ${\sf \sharp 2}$};
\draw (17.5,-7.5) node{\footnotesize ${\sf \sharp 3}$};
\end{tikzpicture}
\]
is a hairy Lie graph with $3$ trees.
The numbering of the trees is indicated by the labels ${\sf \sharp 1}$, etc.
The cyclic ordering at each trivalent vertex is given by the planar structure of the trees in the figure. 
There are $4$ dotted edges.
The Lie degree is $2+1+4 = 7$, and the homological degree is $5$.

\begin{remark} 
In a more general context, the unitrivalent trees in a hairy Lie graph are called internal vertices (see \cite{CKV13}).
In \cite{Con15}, dotted edges are called external edges.
\end{remark}

Let $C_k \H$ be the $\Q$-vector space spanned by hairy Lie graphs with $k$ trees modulo the following relations:
\begin{enumerate}
\item[(i)] the IHX relation within unitrivalent trees 
\[
\begin{tikzpicture}[baseline=-0.5em, x=2em, y=2em]
\draw (0,-1/2) -- (0,1/2);
\draw (0,1/2)--({sqrt(3)/2},1);
\draw (0,1/2)--({-sqrt(3)/2},1);
\draw (0,-1/2)--({sqrt(3)/2},-1);
\draw (0,-1/2)--({-sqrt(3)/2},-1);
\end{tikzpicture}
\quad - \quad 
\begin{tikzpicture}[baseline=-0.5em, x=2em, y=2em]
\draw (-1/2,0) -- (1/2,0);
\draw (1/2,0)--({sqrt(3)/2},1);
\draw (-1/2,0)--({-sqrt(3)/2},1);
\draw (1/2,0)--({sqrt(3)/2},-1);
\draw (-1/2,0)--({-sqrt(3)/2},-1);
\end{tikzpicture}
\quad + \quad 
\begin{tikzpicture}[baseline=-0.5em, x=2em, y=2em]
\draw (-1/2,0) -- (1/2,0);
\draw (-1/2,0)--({sqrt(3)/2},1);
\draw (1/2,0)--({-sqrt(3)/2},1);
\draw (1/2,0)--({sqrt(3)/2},-1);
\draw (-1/2,0)--({-sqrt(3)/2},-1);
\end{tikzpicture}
= 0;
\]

\item[(ii)] the AS relation within unitrivalent trees
\[
\begin{tikzpicture}[baseline=-1em, x=2em, y=2em]
\draw (0,0)--(0,-1);
\draw (0,0) to[out=150, out looseness=2](0.866,1/2);
\draw (0,0) to[out=30, out looseness=2](-0.866,1/2);
\end{tikzpicture}
= -\begin{tikzpicture}[baseline=-1em, x=2em, y=2em]
\draw (0,0)--(0,-1);
\draw (0,0)--({sqrt(3)/2},1/2);
\draw (0,0)--({-sqrt(3)/2},1/2);
\end{tikzpicture};
\]

\item[(iii)] the multi-linearity relation on $H$-colorings;
\item[(iv)] switching direction of a dotted edge gives a minus sign;
\item[(v)] renumbering the trees by a transposition gives a minus sign.  
\end{enumerate}
We also use a variant introduced by Conant~\cite{Con16}: let $C_k \H^{\ord}$ be the $\Q$-vector space spanned by hairy Lie graphs with $k$ trees modulo the relations (i) to (iv) above.
In $C_k \H^{\ord}$, the $k$ trees in a hairy Lie graph have a well-defined linear ordering, explaining the notation. 

The Lie degree and the number of dotted edges are preserved by the relations (i) to (v).
Thus, the space $C_k \H$ admits the direct sum decomposition
\[
C_k \H = \bigoplus_{n \ge 1} C_k \H(n) = 
\bigoplus_{n \ge 1} \bigoplus_{r \ge 0} C_{k,r} \H(n),
\]
where $n$ stands for the Lie degree and $r$ the number of dotted edges.
Similarly, 
\[
C_k \H^{\ord} = \bigoplus_{n \ge 1} C_k \H^{\ord}(n) = 
\bigoplus_{n \ge 1} \bigoplus_{r \ge 0} C_{k,r} \H^{\ord}(n).
\]
The homological degree $m$ of a hairy Lie graph in $C_{k,r} \H(n)$ and $C_{k,r} \H^{\ord}(n)$ is given by the following formula:
\[
n + 2k = m + 2r.
\]

\begin{remark}
With the above notation, $\h(n) = C_{1,0} \H(n)$.
\end{remark}

When $k=1$, we introduce the notation $\T_{r,m} := C_{1,r} \H(m + 2r - 2)$ to emphasize the homological degree $m$.
An element of $\T_{r,m}$ is a tree-shaped Jacobi diagram with $r$ dotted edges and $m$ univalent $H$-colored vertices.
Thus we have
\[
C_1 \H (n) = \bigoplus_{r=0}^{\lfloor n/2 \rfloor + 1} C_{1,r} \H(n)
= \bigoplus_{r=0}^{\lfloor n/2 \rfloor + 1} \T_{r, n + 2 -2r}.
\]
Here, note that a tree-shaped Jacobi diagram of degree $n$ has $n+2$ univalent vertices; if $r$ dotted edges are attached to such a diagram, there remain $n+2-2r$ univalent vertices.
In the same manner, we set $\Y_{r,m}^{\ord}:= C_{(m + 2r)/3, r} \H^{\ord}((m + 2r)/3)$.
An element of $\Y_{r,m}^{\ord}$ is a union of ordered $(m+2r)/3$ tripods with $r$ dotted edges and $m$ univalent $H$-colored vertices. 
We have
\[
C_n \H^{\ord} (n) = \bigoplus_{r=0}^{\lfloor 3n/2 \rfloor} C_{n,r} \H^{\ord}(n)
= \bigoplus_{r=0}^{\lfloor 3n/2 \rfloor} \Y_{r, 3n - 2r}^{\ord}.
\]
Here, note that a union of $n$ tripods has $3n$ univalent vertices; if $r$ dotted edges are attached to such a diagram, there remain $3n - 2r$ univalent vertices.

\subsection{Conant's description of the Johnson cokernel}\label{subsec:tracemap}

We review a graphical description of the Johnson cokernel in the stable range given by Conant~\cite{Con16}, which is based on an important result by Hain~\cite{Hai97} that the image of the Johnson homomorphism is generated by the degree one part $\h(1) = {\rm Im}\,\tau_1^\Q$.

First we recall the trace map introduced by Conant, Kassabov and Vogtmann~\cite{CKV13}.
In fact, we use the degree one part only, which is of the form 
\[
\Tr: \h(n) \to C_1 \H(n). 
\]
In more detail, $\Tr$ decomposes into the components
\[
\Tr = \bigoplus_{r=0}^{\lfloor n/2 \rfloor + 1} \Tr_r: \h(n) \to 
\bigoplus_{r=0}^{\lfloor n/2 \rfloor + 1} C_{1,r} \H(n),
\]
and $\Tr_r: \h(n) \to C_{1,r} \H(n)$ is defined by adding $r$ dotted edges to an $H$-colored tree-shaped Jacobi diagram in all possible unordered ways.
Adding a dotted edge means to pick up a pair $(l_1,l_2)$ of leaves of an $H$-colored tree-shaped Jacobi diagram, add a dotted edge directed from $l_1$ to $l_2$, and multiply by $(x_1 \cdot x_2)$, where $x_i \in H$ is the coloring of $l_i$.
The relation (iv) for $C_1 \H$ implies that the result depends only on the unordered pair $\{ l_1, l_2\}$.
For example, if 
\[
X = 
\begin{tikzpicture}[baseline=-3pt, x=2mm, y=2mm]
\draw (0,0) -- (24,0);
\draw (0,-2) -- (0,2);
\draw (3,0) -- (3,-2);
\draw (6,0) -- (6,2);
\draw (9,0) -- (9,-2);
\draw (12,0) -- (12,2);
\draw (15,0) -- (15,-2);
\draw (18,0) -- (18,2);
\draw (21,0) -- (21,-2);
\draw (0,3) node{$a_1$};
\draw (0,-3) node{$a_4$};
\draw (3,-3) node{$a_3$};
\draw (6,3) node{$b_5$};
\draw (9,-3) node{$b_2$};
\draw (12,3) node{$b_1$};
\draw (15,-3) node{$a_2$};
\draw (18,3) node{$a_6$};
\draw (21,-3) node{$b_3$};
\draw (25.5,0) node{$a_7$};
\end{tikzpicture},
\] 
where $a_i$ and $b_i$ stand for members of a symplectic basis of $H$, then 
\[
\Tr_2(X) = 
\begin{tikzpicture}[baseline=-2pt, x=1mm, y=1mm]
\draw (0,0) -- (24,0);
\draw (0,-2) -- (0,2);
\draw (3,0) -- (3,-2);
\draw (6,0) -- (6,2);
\draw (9,0) -- (9,-2);
\draw (12,0) -- (12,2);
\draw (15,0) -- (15,-2);
\draw (18,0) -- (18,2);
\draw (21,0) -- (21,-2);
\draw (-0.25,-3.5) node{\small $a_4$};
\draw (3.75,-3.5) node{\small $a_3$};
\draw (6,3.6) node{\small $b_5$};
\draw (18,3.5) node{\small $a_6$};
\draw (21,-3.5) node{\small $b_3$};
\draw (26,0) node{\small $a_7$};
\draw[red, very thick, dotted, ->-] (0,2) -- (0,6) to[bend left=30] (1,7) -- (11,7) to[bend left=30] (12,6) -- (12,2);
\draw[red, very thick, dotted, -<-] (9,-2) -- (9,-3) to[bend right=30] (10,-4) -- (14,-4) to[bend right=30] (15,-3) -- (15,-2);
\end{tikzpicture}
+ 
\begin{tikzpicture}[baseline=-2pt, x=1mm, y=1mm]
\draw (0,0) -- (24,0);
\draw (0,-2) -- (0,2);
\draw (3,0) -- (3,-2);
\draw (6,0) -- (6,2);
\draw (9,0) -- (9,-2);
\draw (12,0) -- (12,2);
\draw (15,0) -- (15,-2);
\draw (18,0) -- (18,2);
\draw (21,0) -- (21,-2);
\draw (0,-3.5) node{\small $a_4$};
\draw (6,3.6) node{\small $b_5$};
\draw (9,-3.5) node{\small $b_2$};
\draw (15,-3.5) node{\small $a_2$};
\draw (18,3.5) node{\small $a_6$};
\draw (26,0) node{\small $a_7$};
\draw[red, very thick, dotted, ->-] (0,2) -- (0,6) to[bend left=30] (1,7) -- (11,7) to[bend left=30] (12,6) -- (12,2);
\draw[red, very thick, dotted, ->-] (3,-2) -- (3,-6) to[bend right=30] (4,-7) -- (20,-7) to[bend right=30] (21,-6) -- (21,-2);
\end{tikzpicture}
+
\begin{tikzpicture}[baseline=-2pt, x=1mm, y=1mm]
\draw (0,0) -- (24,0);
\draw (0,-2) -- (0,2);
\draw (3,0) -- (3,-2);
\draw (6,0) -- (6,2);
\draw (9,0) -- (9,-2);
\draw (12,0) -- (12,2);
\draw (15,0) -- (15,-2);
\draw (18,0) -- (18,2);
\draw (21,0) -- (21,-2);
\draw (0,3.5) node{\small $a_1$};
\draw (0,-3.5) node{\small $a_4$};
\draw (6,3.6) node{\small $b_5$};
\draw (12,3.6) node{\small $b_1$};
\draw (18,3.5) node{\small $a_6$};
\draw (26,0) node{\small $a_7$};
\draw[red, very thick, dotted, -<-] (9,-2) -- (9,-3) to[bend right=30] (10,-4) -- (14,-4) to[bend right=30] (15,-3) -- (15,-2);
\draw[red, very thick, dotted, ->-] (3,-2) -- (3,-6) to[bend right=30] (4,-7) -- (20,-7) to[bend right=30] (21,-6) -- (21,-2);
\end{tikzpicture}.
\]
Let
\[
\Tr^{\ord}: \h(1)^{\otimes n} \to C_n \H^{\ord}(n)
\]
be the map defined in the same manner: it adds several dotted edges to an ordered $n$-tuple of $H$-colored tripods, which represents an element in $\h(1)^{\otimes n}$, in all possible unordered ways.

Next we recall the map 
\[
\beta: C_k \H^{\ord} \to C_{k-1} \H^{\ord}.
\]
Let $X$ be a hairy Lie graph with $k$ trees, and we arrange that any dotted edge between the first and second trees is directed from the first tree.
Then, $\beta(X)$ is defined by changing a dotted edge joining the first and second trees to a solid edge in all possible ways.
We number the resulting $(k-1)$ trees so that the tree obtained from the first and second trees comes first followed by the remaining $(k-2)$ trees keeping their linear ordering.
For example, if
\[
X = 
\begin{tikzpicture}[baseline=-5pt, x=2mm, y=2mm]
\draw (0,0) -- (6,0);
\draw (0,-2) -- (0,2);
\draw (3,0) -- (3,2);
\draw (6,-2) -- (6,2);
\draw (10,0) -- (16,0);
\draw (10,-2) -- (10,2);
\draw (13,0) -- (13,2);
\draw (20,0) -- (26,0);
\draw (26,6) -- (26,-2);
\draw (23,0) -- (23,2);
\draw (26,2) -- (28,2);
\draw (26,4) -- (28,4);
\draw (0,-3) node{$a$};
\draw (6,3) node{$b$};
\draw (10,3) node{$c$};
\draw (26,7) node{$d$};
\draw (29,4) node{$e$};
\draw (29,2) node{$f$};
\draw (26,-3) node{$g$};
\draw[red, very thick, dotted, ->-] (0,2) -- (0,6) to[bend left=30] (1,7) -- (22,7) to[bend left=30] (23,6) -- (23,2);
\draw[red, very thick, dotted, ->-] (3,2) -- (3,4) to[bend left=30] (4,5) -- (12,5) to[bend left=30] (13,4) -- (13,2);
\draw[red, very thick, dotted, ->-] (6,-2) -- (6,-2.5) to[bend right=30] (6.5,-3) -- (9.5,-3) to[bend right=30] (10,-2.5) -- (10,-2);
\draw[red, very thick, dotted, ->-] (16,0) -- (20,0);
\draw (4,-4.5) node{\footnotesize ${\sf \sharp 1}$};
\draw (13,-4.5) node{\footnotesize ${\sf \sharp 2}$};
\draw (22,-4.5) node{\footnotesize ${\sf \sharp 3}$};
\end{tikzpicture},
\]
then 
\[
\beta(X) = 
\begin{tikzpicture}[baseline=0pt, x=1.6mm, y=1.6mm]
\draw (0,0) -- (6,0);
\draw (0,-2) -- (0,2);
\draw (3,0) -- (3,2);
\draw (6,-2) -- (6,2);
\draw (10,0) -- (16,0);
\draw (10,-2) -- (10,2);
\draw (13,0) -- (13,2);
\draw (20,0) -- (26,0);
\draw (26,6) -- (26,-2);
\draw (23,0) -- (23,2);
\draw (26,2) -- (28,2);
\draw (26,4) -- (28,4);
\draw (0,-3) node{$a$};
\draw (6,3) node{$b$};
\draw (10,3) node{$c$};
\draw (26,7.5) node{$d$};
\draw (29,4.5) node{$e$};
\draw (29,1.5) node{$f$};
\draw (26,-3.25) node{$g$};
\draw (3,2) -- (3,4) to[bend left=30] (4,5) -- (12,5) to[bend left=30] (13,4) -- (13,2);
\draw[red, very thick, dotted, ->-] (0,2) -- (0,6) to[bend left=30] (1,7) -- (22,7) to[bend left=30] (23,6) -- (23,2);
\draw[red, very thick, dotted, ->-] (6,-2) -- (6,-2.5) to[bend right=30] (6.5,-3) -- (9.5,-3) to[bend right=30] (10,-2.5) -- (10,-2);
\draw[red, very thick, dotted, ->-] (16,0) -- (20,0);
\draw (8,-5) node{\footnotesize ${\sf \sharp 1}$};
\draw (22,-5) node{\footnotesize ${\sf \sharp 2}$};
\end{tikzpicture}
+ 
\begin{tikzpicture}[baseline=0pt, x=1.6mm, y=1.6mm]
\draw (0,0) -- (6,0);
\draw (0,-2) -- (0,2);
\draw (3,0) -- (3,2);
\draw (6,-2) -- (6,2);
\draw (10,0) -- (16,0);
\draw (10,-2) -- (10,2);
\draw (13,0) -- (13,2);
\draw (20,0) -- (26,0);
\draw (26,6) -- (26,-2);
\draw (23,0) -- (23,2);
\draw (26,2) -- (28,2);
\draw (26,4) -- (28,4);
\draw (0,-3) node{$a$};
\draw (6,3) node{$b$};
\draw (10,3) node{$c$};
\draw (26,7.5) node{$d$};
\draw (29,4.5) node{$e$};
\draw (29,1.5) node{$f$};
\draw (26,-3.25) node{$g$};
\draw (6,-2) -- (6,-2.5) to[bend right=30] (6.5,-3) -- (9.5,-3) to[bend right=30] (10,-2.5) -- (10,-2);
\draw[red, very thick, dotted, ->-] (0,2) -- (0,6) to[bend left=30] (1,7) -- (22,7) to[bend left=30] (23,6) -- (23,2);
\draw[red, very thick, dotted, ->-] (3,2) -- (3,4) to[bend left=30] (4,5) -- (12,5) to[bend left=30] (13,4) -- (13,2);
\draw[red, very thick, dotted, ->-] (16,0) -- (20,0);
\draw (8,-5) node{\footnotesize ${\sf \sharp 1}$};
\draw (22,-5) node{\footnotesize ${\sf \sharp 2}$};
\end{tikzpicture}.
\]

Successive application of $\beta$ yields the map
\[
\beta^{n-1} : C_n \H^{\ord}(n) \to C_1 \H^{\ord}(n) = C_1 \H(n)
\]
which keeps the homological degree.
Note that $\beta^{n-1}$ maps $C_{n,r} \H^{\ord}(n)$ to $C_{1,r-n+1} \H (n)$, and $\beta^{n-1}$ vanishes if $r <  n-1$.

Now, for each $r$, the space $C_{1,r} \H(n) = \T_{r,n+2-2r}$ is the image of an $\Sp$-homomorphism from a direct sum of finitely many copies of the tensor space $H^{\otimes(n+2-2r)}$.
Thus the decomposition \eqref{eq:Hn_Sp_decomp} yields an $\Sp$-decomposition of $\T_{r, n+2-2r}$. 
Let $\T_{r, \langle n+2-2r \rangle}$ be the space of top level partitions in $\T_{r, n+2-2r}$, namely the sum of irreducible $\Sp$-submodules corresponding to Young diagrams of size $n+2-2r$.
Set
\[
C_1 \H_{\rm top} (n) := \bigoplus_{r=0}^{\lfloor (n/2) \rfloor + 1} \T_{r, \langle n+2-2r \rangle}.
\] 
Assembling the projections $\T_{r, n+2-2r} \to \T_{r, \langle n+2-2r \rangle}$,
we obtain the surjective map 
\[
\pi : C_1 \H(n) \to C_1 \H_{\rm top} (n).
\]
Similarly, we introduce the $\Sp$-submodule $\Y^{\ord}_{r, \langle 3n-2r \rangle} \subset \Y^{\ord}_{r, 3n-2r}$ and the surjective map 
$
\pi : C_n \H^{\ord}(n) \to C_n \H^{\ord}_{\rm top} (n) := \bigoplus_{r=0}^{\lfloor 3n/2 \rfloor} \Y^{\ord}_{r, \langle 3n-2r \rangle}
$.
The map $\beta^{n-1}$ restricts to the map $ C_n \H^{\ord}_{\rm top}(n) \to C_1 \H_{\rm top} (n)$,
which we denote by the same letter.

Conant \cite{Con16} considered the following commutative diagram:
\begin{equation} \label{eq:ConantAddendum}
\xymatrix{
\h(1)^{\otimes n} \ar[r]^{\hspace{-1em} \Tr^{\ord}} \ar[d]_{\br} & C_n \H^{\ord}(n) \ar[d]^{\beta^{n-1}} \ar[r]^{\pi} & C_n \H^{\ord}_{\rm top}(n) \ar[d]^{\beta^{n-1}} \\
\h(n) \ar[r]^{\hspace{-1em} \Tr} & C_1 \H(n) \ar[r]^{\pi} & C_1 \H_{\rm top} (n)
}
\end{equation}
Here, the left vertical map is the left-nested iterated Lie bracket:
\[
\br(X_1 \otimes X_2 \otimes \cdots \otimes X_n) = [[\cdots [X_1, X_2],X_3],\cdots,X_{n-1}],X_n].
\]
Hain's result~\cite{Hai97} is rephrased as ${\rm Im}\, \tau_n^\Q = \m(n) = \br(\h(1)^{\otimes n})$, where $\tau_n^\Q$ is the $n$-th Johnson homomorphism.
Conant observed that the two horizontal maps $\pi \circ \Tr^{\ord}$ and $\pi \circ \Tr$ are both isomorphisms if $g \gg n$. From this, he deduced the following.

\begin{theorem}[Conant \cite{Con16}, Theorem 2.1] \label{thm:conant_detail}
If $g \gg n$, the composition $\pi \circ \Tr$ induces an $\Sp$-module isomorphism
\begin{equation} \label{eq:graphical_h/m}
\h(n)/\m(n) \cong C_1 \H_{\rm top}(n) / \beta^{n-1}(C_n \H^{\rm ord}_{\rm top}(n)).
\end{equation}
\end{theorem}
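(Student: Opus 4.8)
The plan is to leverage the commutative diagram \eqref{eq:ConantAddendum} together with the two inputs recorded just above the statement: Hain's identity $\m(n) = \br(\h(1)^{\otimes n})$, and the fact that for $g \gg n$ both horizontal composites $\pi \circ \Tr^{\ord}$ and $\pi \circ \Tr$ are isomorphisms. Granting the latter, the map $\pi \circ \Tr \colon \h(n) \to C_1 \H_{\rm top}(n)$ is an $\Sp$-isomorphism, so it suffices to show that it carries the submodule $\m(n)$ exactly onto $\beta^{n-1}(C_n \H^{\ord}_{\rm top}(n))$; passing to quotients then yields the asserted isomorphism.

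First I would identify the image $(\pi \circ \Tr)(\m(n))$. Writing $\m(n) = \br(\h(1)^{\otimes n})$ and chasing the outer rectangle of \eqref{eq:ConantAddendum} — that is, composing the commutativity $\Tr \circ \br = \beta^{n-1} \circ \Tr^{\ord}$ of the left square with the compatibility of $\pi$ and $\beta^{n-1}$ recorded when the latter was seen to restrict to the top-level subspaces — one obtains the equality of maps $\h(1)^{\otimes n} \to C_1 \H_{\rm top}(n)$
\[
(\pi \circ \Tr) \circ \br = \beta^{n-1} \circ (\pi \circ \Tr^{\ord}).
\]

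Next, since $\pi \circ \Tr^{\ord}$ is in particular surjective, its image is all of $C_n \H^{\ord}_{\rm top}(n)$, whence
\[
(\pi \circ \Tr)(\m(n)) = \beta^{n-1}\bigl( (\pi \circ \Tr^{\ord})(\h(1)^{\otimes n}) \bigr) = \beta^{n-1}\bigl( C_n \H^{\ord}_{\rm top}(n) \bigr).
\]
Because $\pi \circ \Tr$ is an $\Sp$-isomorphism sending $\m(n)$ onto this subspace, it descends to the desired $\Sp$-module isomorphism on quotients.

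Once the two composites are known to be isomorphisms, the remaining argument is a purely formal diagram chase; the only point needing attention is the use of surjectivity of $\pi \circ \Tr^{\ord}$ to pin the image down as the \emph{full} space $C_n \H^{\ord}_{\rm top}(n)$ rather than a proper subspace. The genuine difficulty therefore does not lie in this deduction but in establishing that $\pi \circ \Tr^{\ord}$ and $\pi \circ \Tr$ are isomorphisms in the stable range — the input I am assuming from Conant's work, which rests on a stable-range analysis of the trace maps and the decomposition \eqref{eq:Hn_Sp_decomp} into top-level and lower partitions.
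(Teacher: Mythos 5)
Your proposal is correct and takes essentially the same route as the paper, which likewise invokes Conant's stable-range observation that $\pi \circ \Tr^{\ord}$ and $\pi \circ \Tr$ are isomorphisms together with Hain's identity $\m(n) = \br(\h(1)^{\otimes n})$, and then deduces the theorem by exactly the diagram chase on \eqref{eq:ConantAddendum} that you carry out. The paper leaves this formal step implicit (``From this, he deduced the following''), whereas you spell it out, correctly pinning down $(\pi \circ \Tr)(\m(n)) = \beta^{n-1}\bigl(C_n \H^{\ord}_{\rm top}(n)\bigr)$ via the surjectivity of $\pi \circ \Tr^{\ord}$ before passing to quotients.
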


For each $n \ge 1$, let
\begin{align*} 
\widetilde{\Omega}(n) &:= C_1 \H(n)/ \beta^{n-1}(C_n \H^{\ord}(n)),\\
\widetilde{\Omega}_{\rm top}(n) &:= C_1 \H_{\rm top}(n) / \beta^{n-1}(C_n \H^{\ord}_{\rm top}(n)), 
\end{align*} 
and set $\widetilde{\Omega} := \bigoplus_{n=1}^{\infty} \widetilde{\Omega}(n)$.
The number of dotted edges defines a direct sum decomposition of $\widetilde{\Omega}(n)$.
Set
\begin{align*}
\widetilde{\Omega}_{r,m} &:=  
\T_{r,m}/\beta^{m+2r-3}(\Y_{m+3r-3, m}^{\ord}), \\
\widetilde{\Omega}_{r,\langle m \rangle} &:=
\T_{r,\langle m \rangle}/\beta^{m+2r-3}(\Y_{m+3r-3, \langle m \rangle}^{\ord}).
\end{align*}
Here, recall that $\T_{r,m}$ is the $\Q$-vector space spanned by tree-shaped Jacobi diagrams with $r$ dotted edges and $m$ univalent $H$-colored vertices.
Then, we have 
\[
\widetilde{\Omega}(n) = \bigoplus_{r=0}^{\lfloor n/2 \rfloor + 1} \widetilde{\Omega}_{r,n+2-2r}, 
\qquad 
\widetilde{\Omega}_{\rm top}(n) = 
\bigoplus_{r=0}^{\lfloor n/2 \rfloor + 1} \widetilde{\Omega}_{r,\langle n+2-2r \rangle}.
\]
Note that the space $\widetilde{\Omega}_{0,m}$ is trivial. 
Let $\widetilde{\Tr}$ be the composition
\[
\widetilde{\Tr}: \h(n) \xrightarrow{\Tr} C_1 \H(n) \to \widetilde{\Omega}(n),
\]
where the second map is the natural projection.
It induces the map $\widetilde{\Tr}: \cok(n) = \h(n)/\m(n) \to \widetilde{\Omega}(n)$,
which we denote by the same letter.
Theorem~\ref{thm:conant_detail} says that for $g \gg n$ the composition 
\begin{equation} \label{eq:r-loop_part}
\cok(n) \xrightarrow{\widetilde{\Tr}}
\widetilde{\Omega}(n)
\xrightarrow{\pi}
\widetilde{\Omega}_{\rm top}(n)
\end{equation}
is an $\Sp$-isomorphism, and in particular $\widetilde{\Tr} = \bigoplus_{r=1}^{\lfloor n/2 \rfloor +1} \widetilde{\Tr}_r$ is injective.
We call $\widetilde{\Omega}_{r, \langle n+2-2r \rangle}$ the {\em $r$-loop part} of the Johnson cokernel in degree $n$.

\section{Description of the $2$-loop space} \label{sec:2-loop}

We have $\widetilde{\Omega} = \bigoplus_{r \ge 1} \widetilde{\Omega}_r$, where $\widetilde{\Omega}_r=\bigoplus_{m=0}^{\infty} \widetilde{\Omega}_{r,m}$.
We call $\widetilde{\Omega}_r$ the $r$-loop part of $\widetilde{\Omega}$. 
In this section, we give a presentation of the $2$-loop space $\widetilde{\Omega}_2$. 

We use the following shorthand notation: for any pure tensor $x= x_1\cdots x_m \in \mathcal{T}$ with $x_i \in H$, we define
\[
\begin{tikzpicture}[baseline=-7pt, x=1.6mm, y=1.6mm]
\draw (-3,0) -- (3,0);
\draw[ut] (0,0) -- (0,-1.5) node[below=-3pt]{$x$};
\end{tikzpicture}
:=
\begin{tikzpicture}[baseline=-7pt, x=1.6mm, y=1.6mm]
\draw (-5,0) -- (5,0);
\draw (-3,0) -- (-3,-1.5);
\draw (3,0) -- (3,-1.5);
\draw (-2.75,-2.5) node{$x_1$};
\draw (0,-1.5) node{$\dots$};
\draw (3.5,-2.5) node{$x_m$};
\end{tikzpicture}
\]
and extend it multi-linearly for any $x\in \mathcal{T}$. 
We have
$
\begin{tikzpicture}[baseline=-7pt, x=1.6mm, y=1.6mm]
\draw (-3,0) -- (3,0);
\draw[ut] (0,0) -- (0,-1.5) node[below=-3pt]{$x$};
\end{tikzpicture}
= 
\begin{tikzpicture}[baseline=0pt, x=1.6mm, y=1.6mm]
\draw (-3,0) -- (3,0);
\draw[ut] (0,0) -- (0,1.5) node[above=-3pt]{$\overline{x}$};
\end{tikzpicture}
$, where $\overline{x} = (-1)^m x_m \cdots x_1$.
In a similar fashion, we abbreviate some part of a hairy Lie graph using letters, e.g., we draw   
$
\begin{tikzpicture}[baseline=2pt, x=1.6mm, y=1.6mm]
\draw (-3,0) -- (3,0);
\draw (0,0) -- (0,1.5) node[above=-3pt]{$T$};
\end{tikzpicture}
$
for a rooted $H$-colored tree-shaped Jacobi diagram $T$.

\subsection{Description of $C_{1,2}\H$} \label{subsec:C12H}

We begin with a description of the space $C_{1,2}\H = \bigoplus_{n\ge 1} C_{1,2}\H(n)$.
Up to isomorphism and changing direction of dotted edges, any hairy Lie graph with one tree and two dotted edges of homological degree zero is one of the following two hairy Lie graphs, the theta and dumbbell graphs:
\[
\begin{tikzpicture}[baseline=-3pt, x=1.2mm, y=1.4mm]
\newsmalltheta{2}
\end{tikzpicture}
\quad \text{and} \quad 
\begin{tikzpicture}[baseline=-3pt, x=1.4mm, y=1.4mm]
\newdumbbell
\end{tikzpicture} \ .
\]
Any hairy Lie graph in $C_{1,2}\H$ is obtained from one of these core graphs by attaching several rooted $H$-colored tree-shaped Jacobi diagrams.
Repeated use of the IHX relation at edges adjacent to the core graph and of the AS relation shows that $C_{1,2}\H$ is generated by elements of the following form: 
\begin{equation} \label{eq:C12generators}
\begin{tikzpicture}[baseline=-3pt, x=1.2mm, y=1.44mm]
\newsmalltheta{2}
\draw[ut] (0,0) -- (-1.5,0) node[left=-3pt]{$x$};
\draw[ut] (-4.5,4) -- (-4.5,5.5) node[above=-3pt]{$v$};
\draw[ut] (4.5,4) -- (4.5,5.5) node[above=-3pt]{$w$};
\draw[ut] (-4.5,-4) -- (-4.5,-5.5) node[below=-3pt]{$t$};
\draw[ut] (4.5,-4) -- (4.5,-5.5) node[below=-3pt]{$u$};
\end{tikzpicture}
\quad \text{and} \quad 
\begin{tikzpicture}[baseline=-3pt, x=1.4mm, y=1.4mm]
\newdumbbell 
\draw[ut] (0,0) -- (0,1.5) node[above=-3pt]{$x$};
\draw[ut] (-6.5,-4) -- (-6.5,-5.5) node[below=-3pt]{$t$};
\draw[ut] (6.5,-4) -- (6.5,-5.5) node[below=-3pt]{$u$};
\draw[ut] (-6.5,4) -- (-6.5,5.5) node[above=-3pt]{$v$};
\draw[ut] (6.5,4) -- (6.5,5.5) node[above=-3pt]{$w$};
\end{tikzpicture} \ .
\end{equation}
These generators are subject to the following four types of relations:
\begin{description} 
\item[\rm (ML)] The multi-linearity relation
on $H$-colorings.

\item[\rm (CS)] {\em The core symmetry,} namely, the relation coming from symmetry of the core graphs and the AS relation.
For instance, turning the left cycle of the dumbbell graph upside down gives
\[
\begin{tikzpicture}[baseline=-3pt, x=1.4mm, y=1.4mm]
\newdumbbell 
\draw[ut] (0,0) -- (0,1.5) node[above=-3pt]{$x$};
\draw[ut] (-6.5,-4) -- (-6.5,-5.5) node[below=-3pt]{$t$};
\draw[ut] (6.5,-4) -- (6.5,-5.5) node[below=-3pt]{$u$};
\draw[ut] (-6.5,4) -- (-6.5,5.5) node[above=-3pt]{$v$};
\draw[ut] (6.5,4) -- (6.5,5.5) node[above=-3pt]{$w$};
\end{tikzpicture}
\ = \    
\begin{tikzpicture}[baseline=-3pt, x=1.4mm, y=1.4mm]
\newdumbbell 
\draw[ut] (0,0) -- (0,1.5) node[above=-3pt]{$x$};
\draw[ut] (-6.5,-4) -- (-6.5,-5.5) node[below=-3pt]{$\overline{v}$};
\draw[ut] (6.5,-4) -- (6.5,-5.5) node[below=-3pt]{$u$};
\draw[ut] (-6.5,4) -- (-6.5,5.5) node[above=-3pt]{$\overline{t}$};
\draw[ut] (6.5,4) -- (6.5,5.5) node[above=-3pt]{$w$};
\end{tikzpicture} \ .
\]

\item[\rm (CIHX)] {\em The core IHX relation}, namely, the IHX relation around the trivalent vertices of the core graphs. 
For instance, around the upper trivalent vertex of the core theta graph, 
\[ 
\begin{tikzpicture}[baseline=-3pt, x=1.2mm, y=1.44mm]
\newsmalltheta{2}
\draw (0,2) -- (-1.5,2) node[left=-3pt]{$a$};
\draw[ut] (0,-1) -- (-1.5,-1) node[left=-3pt]{$x$};
\draw[ut] (-6,4) -- (-6,5.5) node[above=-3pt]{$v$};
\draw[ut] (6,4) -- (6,5.5) node[above=-3pt]{$w$};
\draw[ut] (-4.5,-4) -- (-4.5,-5.5) node[below=-3pt]{$t$};
\draw[ut] (4.5,-4) -- (4.5,-5.5) node[below=-3pt]{$u$};
\end{tikzpicture}
\ = \ 
\begin{tikzpicture}[baseline=-3pt, x=1.2mm, y=1.44mm]
\newsmalltheta{2}
\draw[ut] (0,0) -- (-1.5,0) node[left=-3pt]{$x$};
\draw[ut] (-6,4) -- (-6,5.5) node[above=-3pt]{$v$};
\draw (3,4) -- (3,5.5) node[above=-3pt]{$a$};
\draw[ut] (6,4) -- (6,5.5) node[above=-3pt]{$w$};
\draw[ut] (-4.5,-4) -- (-4.5,-5.5) node[below=-3pt]{$t$};
\draw[ut] (4.5,-4) -- (4.5,-5.5) node[below=-3pt]{$u$};
\end{tikzpicture}
\ - \ 
\begin{tikzpicture}[baseline=-3pt, x=1.2mm, y=1.44mm]
\newsmalltheta{2}
\draw[ut] (0,0) -- (-1.5,0) node[left=-3pt]{$x$};
\draw (-3,4) -- (-3,5.5) node[above=-3pt]{$a$};
\draw[ut] (-6,4) -- (-6,5.5) node[above=-3pt]{$v$};
\draw[ut] (6,4) -- (6,5.5) node[above=-3pt]{$w$};
\draw[ut] (-4.5,-4) -- (-4.5,-5.5) node[below=-3pt]{$t$};
\draw[ut] (4.5,-4) -- (4.5,-5.5) node[below=-3pt]{$u$};
\end{tikzpicture} \ .
\]
Here, $a\in H$ while the other labels are arbitrary elements in $\mathcal{T}$.
\item[\rm (CC)] {\em The core change relation}. In $C_{1,2}\H$, there is a relation between dumbbell and theta graphs. 
Applying the IHX relation to the handle of the dumbbell graph with $x=1$, we have
\begin{align*}
\begin{tikzpicture}[baseline=-3pt, x=1.2mm, y=1.2mm]
\draw (-3,0) -- (3,0);  
\draw[red, very thick, dotted, ->-] (10,-2) -- (10,2);   
\draw[red, very thick, dotted, ->-] (-10,-2) -- (-10,2); 
\draw (-3,0) to (-3,3) to[bend right=30] (-4,4) to (-9,4) to[bend right=30] (-10,3) to (-10,2);
\draw (-3,0) to (-3,-3) to[bend left=30] (-4,-4) to (-9,-4) to[bend left=30] (-10,-3) to (-10,-2);
\draw (3,0) to (3,3) to[bend left=30] (4,4) to (9,4) to[bend left=30] (10,3) to (10,2);
\draw (3,0) to (3,-3) to[bend right=30] (4,-4) to (9,-4) to[bend right=30] (10,-3) to (10,-2);
\draw[ut] (-6.5,4) -- (-6.5,5.5) node[above=-3pt]{$v$};
\draw[ut] (6.5,4) -- (6.5,5.5) node[above=-3pt]{$w$};
\draw[ut] (-6.5,-4) -- (-6.5,-5.5) node[below=-3pt]{$t$};
\draw[ut] (6.5,-4) -- (6.5,-5.5) node[below=-3pt]{$u$};
\end{tikzpicture}
\ = \ &  
\begin{tikzpicture}[baseline=-3pt, x=1.2mm, y=1.2mm]
\draw[red, very thick, dotted, ->-] (10,-2) -- (10,2);   
\draw[red, very thick, dotted, ->-] (-10,-2) -- (-10,2); 
\draw (0,4) to (-9,4) to[bend right=30] (-10,3) to (-10,2);
\draw (0,-4) to (-9,-4) to[bend left=30] (-10,-3) to (-10,-2);
\draw (0,4) to (9,4) to[bend left=30] (10,3) to (10,2);
\draw (0,-4) to (9,-4) to[bend right=30] (10,-3) to (10,-2);
\draw (3,-4) -- (3,4);
\draw[ut] (-6.5,4) -- (-6.5,5.5) node[above=-3pt]{$v$};
\draw[ut] (6.5,4) -- (6.5,5.5) node[above=-3pt]{$w$};
\draw[ut] (-6.5,-4) -- (-6.5,-5.5) node[below=-3pt]{$t$};
\draw[ut] (6.5,-4) -- (6.5,-5.5) node[below=-3pt]{$u$};
\end{tikzpicture}
\ - \  
\begin{tikzpicture}[baseline=-3pt, x=1.2mm, y=1.2mm]
\draw[red, very thick, dotted, ->-] (10,-2) -- (10,2);   
\draw[red, very thick, dotted, ->-] (-10,-2) -- (-10,2); 
\draw (-3,4) to (-9,4) to[bend right=30] (-10,3) to (-10,2);
\draw (-3,-4) to (-9,-4) to[bend left=30] (-10,-3) to (-10,-2);
\draw (3,4) to (9,4) to[bend left=30] (10,3) to (10,2);
\draw (3,-4) to (9,-4) to[bend right=30] (10,-3) to (10,-2);
\draw (3,-4) -- (3,4);
\draw {[rounded corners=2pt] (-3,-4) -- (-2,-4) -- (2,4) -- (3,4)};
\draw {[rounded corners=2pt] (-3,4) -- (-2,4) -- (-0.5,1)};
\draw {[rounded corners=2pt] (3,-4) -- (2,-4) -- (0.5,-1)};
\draw[ut] (-6.5,4) -- (-6.5,5.5) node[above=-3pt]{$v$};
\draw[ut] (6.5,4) -- (6.5,5.5) node[above=-3pt]{$w$};
\draw[ut] (-6.5,-4) -- (-6.5,-5.5) node[below=-3pt]{$t$};
\draw[ut] (6.5,-4) -- (6.5,-5.5) node[below=-3pt]{$u$};
\end{tikzpicture} \\
\ = \ &
\begin{tikzpicture}[baseline=-3pt, x=1.2mm, y=1.44mm]
\newsmalltheta{2}
\draw[ut] (-4.5,4) -- (-4.5,5.5) node[above=-3pt]{$v$};
\draw[ut] (4.5,4) -- (4.5,5.5) node[above=-3pt]{$w$};
\draw[ut] (-4.5,-4) -- (-4.5,-5.5) node[below=-3pt]{$t$};
\draw[ut] (4.5,-4) -- (4.5,-5.5) node[below=-3pt]{$u$};
\end{tikzpicture}
\ + \ 
\begin{tikzpicture}[baseline=-3pt, x=1.2mm, y=1.44mm]
\newsmalltheta{2}
\draw[ut] (-4.5,4) -- (-4.5,5.5) node[above=-3pt]{$\overline{t}$};
\draw[ut] (4.5,4) -- (4.5,5.5) node[above=-3pt]{$w$};
\draw[ut] (-4.5,-4) -- (-4.5,-5.5) node[below=-3pt]{$\overline{v}$};
\draw[ut] (4.5,-4) -- (4.5,-5.5) node[below=-3pt]{$u$};
\end{tikzpicture} \ .
\end{align*}
\end{description}
Thus we have
\begin{equation} \label{eq:C12H}
C_{1,2}\H \cong 
\frac{\Q \{ \text{generators in \eqref{eq:C12generators}} \}}{\text{(ML), ({CS}), ({CIHX}), ({CC})}}.
\end{equation}

\subsection{Generalized Conant's third relation}

We show the following theorem, which describes $\widetilde{\Omega}_2$ as a quotient of $C_{1,2}\H$ by a single type of relations.

\begin{theorem}\label{thm:omega2}
\par
The $\Q$-vector space $\widetilde{\Omega}_2$ is isomorphic to the quotient of $C_{1,2}\H$ by the following relation: 
for any $x,y,t,u,v,w \in \mathcal{T}$, 
\[
\begin{tikzpicture}[baseline=-3pt, x=1.6mm, y=1.6mm]
\newbigtheta{1}
\draw[ut] (0,4) -- (-1.5,4) node[left=-3pt]{$x$};
\draw[ut] (0,-4) -- (-1.5,-4) node[left=-3pt]{$y$};
\draw[ut] (-4,6) -- (-4,7.5) node[above=-3pt]{$v$};
\draw[ut] (4,6) -- (4,7.5) node[above=-3pt]{$w$};
\draw[ut] (-4,-6) -- (-4,-7.5) node[below=-3pt]{$t$};
\draw[ut] (4,-6) -- (4,-7.5) node[below=-3pt]{$u$};
\end{tikzpicture}
\, + \, 
\begin{tikzpicture}[baseline=-3pt, x=1.6mm, y=1.6mm]
\newbigtheta{2}
\draw[ut] (0,4) -- (-1.5,4) node[left=-3pt]{$x$};
\draw[ut] (0,-4) -- (-1.5,-4) node[left=-3pt]{$y$};
\draw[ut] (-4,6) -- (-4,7.5) node[above=-3pt]{$v$};
\draw[ut] (4,6) -- (4,7.5) node[above=-3pt]{$w$};
\draw[ut] (-4,-6) -- (-4,-7.5) node[below=-3pt]{$t$};
\draw[ut] (4,-6) -- (4,-7.5) node[below=-3pt]{$u$};
\end{tikzpicture}
\, + \, 
\begin{tikzpicture}[baseline=-3pt, x=1.6mm, y=1.6mm]
\newbigtheta{3}
\draw[ut] (0,4) -- (-1.5,4) node[left=-3pt]{$x$};
\draw[ut] (0,-4) -- (-1.5,-4) node[left=-3pt]{$y$};
\draw[ut] (-4,6) -- (-4,7.5) node[above=-3pt]{$v$};
\draw[ut] (4,6) -- (4,7.5) node[above=-3pt]{$w$};
\draw[ut] (-4,-6) -- (-4,-7.5) node[below=-3pt]{$t$};
\draw[ut] (4,-6) -- (4,-7.5) node[below=-3pt]{$u$};
\end{tikzpicture}
= 0.
\tag{GC3}
\]
\end{theorem}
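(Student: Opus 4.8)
By the constructions of Section~\ref{sec:hlg}, in each Lie degree $n$ we have $\widetilde{\Omega}_{2,n-2} = C_{1,2}\H(n)/\Im(\beta^{n-1})$, where $\beta^{n-1}\colon C_{n,n+1}\H^{\ord}(n)\to C_{1,2}\H(n)$ is the iterated contraction. Combined with the presentation \eqref{eq:C12H} of $C_{1,2}\H$ by the theta and dumbbell generators \eqref{eq:C12generators} modulo (ML), (CS), (CIHX), (CC), the theorem is equivalent to the single statement that, as subspaces of $C_{1,2}\H$,
\[
\Im(\beta^{n-1}) = \langle (\mathrm{GC3}) \rangle,
\]
the span of all left-hand sides of (GC3) as $x,y,t,u,v,w$ range over $\mathcal{T}$. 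The plan is to prove the two inclusions separately after reducing to the last application of $\beta$: since $\beta^{n-1} = \beta\circ\beta^{n-2}$ with $\beta^{n-2}\bigl(C_{n,n+1}\H^{\ord}(n)\bigr)\subseteq C_{2,3}\H^{\ord}(n)$, we have the trivial inclusion $\Im(\beta^{n-1})\subseteq \Im\bigl(\beta\colon C_{2,3}\H^{\ord}(n)\to C_{1,2}\H(n)\bigr)$. It therefore suffices to establish $\langle(\mathrm{GC3})\rangle\subseteq\Im(\beta^{n-1})$ and $\Im(\beta|_{C_{2,3}\H^{\ord}})\subseteq\langle(\mathrm{GC3})\rangle$; together with the trivial inclusion these three inclusions form a cycle and force equality.

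For $\langle(\mathrm{GC3})\rangle\subseteq\Im(\beta^{n-1})$, I would exhibit an explicit preimage. Fix $x,y,t,u,v,w\in\mathcal{T}$ and let $W_0\in C_{2,3}\H^{\ord}(n)$ be the two-tree hairy Lie graph whose trees $T_1$ (carrying the combs $v,w$ and $x$) and $T_2$ (carrying $t,u$ and $y$) are joined by exactly three dotted edges in the left, center, and right positions of the theta. Applying $\beta$ converts each of the three connecting dotted edges to a solid edge in turn, and a direct inspection shows that the three resulting graphs are precisely the three summands of the left-hand side of (GC3); thus $\beta(W_0)$ equals the (GC3) element. Since each of $T_1,T_2$ is a unitrivalent tree and hence lies in the image of the tree-merging map obtained by iterating $\beta$ on an ordered family of tripods, we have $W_0\in\Im(\beta^{n-2})$, whence the (GC3) element lies in $\Im(\beta^{n-1})$.

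For the reverse inclusion $\Im(\beta|_{C_{2,3}\H^{\ord}})\subseteq\langle(\mathrm{GC3})\rangle$, I would run through the generators $W$ of $C_{2,3}\H^{\ord}(n)$ according to the number $d\in\{1,2,3\}$ of dotted edges joining the two trees, since only these are acted on by $\beta$. When $d=3$, the minimal subtree of each $T_i$ spanned by its three dotted stubs is a tripod, providing the two trivalent vertices of the theta; after normalizing the two trees within $C_{2,3}\H^{\ord}(n)$ by the IHX relation (i) and the AS relation (ii), the sum $\beta(W)$ is exactly a (GC3) element with suitable combs, hence lies in $\langle(\mathrm{GC3})\rangle$. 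When $d=2$ (resp.\ $d=1$), $\beta(W)$ is a two-term (resp.\ one-term) combination of theta and dumbbell graphs in which one (resp.\ two) of the surviving dotted edges is an interior loop of a single tree. Here I would rewrite $\beta(W)$ using the core relations of \eqref{eq:C12H}: (CC) trades a dumbbell loop for a theta configuration, while (CIHX) together with (CS) redistributes the combs and the interior loop so as to complete the partial expression into a full three-term (GC3) combination, exhibiting every such $\beta(W)$ as a $\Q$-linear combination of $d=3$ images.

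The main obstacle is precisely this last step, the cases $d\le 2$: one must show that the \emph{missing} theta terms, namely those that would arise from converting an interior dotted edge and which $\beta$ never produces, are forced into $\langle(\mathrm{GC3})\rangle$ by the relations (CC) and (CIHX) rather than surviving as independent generators. Carrying this out requires sign-sensitive bookkeeping of the directed dotted edges (relation (iv)) and of the cyclic orientations at the two trivalent vertices of the theta, and it is where the full strength of the presentation \eqref{eq:C12H} of $C_{1,2}\H$ is used.
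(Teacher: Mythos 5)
Your reduction to the last application of $\beta$ is where the argument breaks, and it breaks in both directions. The image of $\beta^{n-2}$ inside $C_{2,3}\H^{\ord}(n)$ is far smaller than $C_{2,3}\H^{\ord}(n)$: since $\beta$ always merges the trees numbered $1$ and $2$, and the ordered space $C_k\H^{\ord}$ has no renumbering relation, every graph in $\beta^{n-2}(C_{n,n+1}\H^{\ord}(n))$ has as its second tree the single tripod $Y_n$; moreover every surviving self-loop dotted edge joins two legs of one tripod, hence bounds a hairless cycle. The relations (i)--(iv) preserve the number of trivalent vertices in each ordered tree, so no linear combination changes this. This invalidates your first inclusion as argued: your $W_0$, whose second tree $T_2$ carries the combs $t,u,y$, does \emph{not} lie in $\Im(\beta^{n-2})$ unless those combs are trivial, so the claim that "$W_0\in\Im(\beta^{n-2})$ because each tree is an iterated merge of tripods" is false --- iterating $\beta$ builds one large tree, never two. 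This is exactly why the paper cannot write down a one-shot preimage of a general $(\mathrm{GC3})$ element: direct preimages exist only when one side of the theta is bare (compare case (I) of Step~1, which yields generators with $v=w=x=1$), and the general containment $R\subseteq\Im(\beta^{n-1})$ is instead proved by induction on $\deg v+\deg w$ in Steps 2-$(i)$--$(iii)$, using the auxiliary element $U$, Lemma~\ref{lem:induction1and2}, and the identities of Lemma~\ref{lem:elementsR}.

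Your second inclusion, $\beta\bigl(C_{2,3}\H^{\ord}(n)\bigr)\subseteq\langle(\mathrm{GC3})\rangle$, is simply false, which is why the "main obstacle" you flag in the cases $d\le 2$ cannot be overcome by (CC), (CIHX) and (CS). Take $d=1$ with \emph{both} self-loops on the first tree $T_1$, placed so that their endpoints interleave along $T_1$ with hairs between them, and a single dotted edge to $T_2$: then $\beta(W)$ is, up to sign, a single theta graph with arbitrary combs ($T_2$ merely hangs off as part of a comb). Varying $T_1$, one obtains every theta-type generator of \eqref{eq:C12generators}, so $\beta(C_{2,3}\H^{\ord})$ is essentially all of $C_{1,2}\H$; were it contained in $\langle(\mathrm{GC3})\rangle=R$, the quotient $\widetilde{\Omega}_2$ would vanish, contradicting, e.g., Proposition~\ref{prop:GL_table}. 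The feature that makes the paper's Step~1 true is precisely what your reduction discards: for graphs genuinely in $\Im(\beta^{n-2})$ all self-loop cycles are bare, and the case analysis (I), (II-i)--(II-iii), organized by the counts $N(Y_i)$ with $\sum_i N(Y_i)=2$ and supported by Lemmas~\ref{lem:elementsR} and~\ref{lem:elementsR2}, is the correctly constrained version of your $d$-case analysis. So while your overall skeleton (two inclusions between $\Im(\beta^{n-1})$ and the span of $(\mathrm{GC3})$ elements) matches the paper, both inclusions as you route them through $C_{2,3}\H^{\ord}$ fail, and the tripod decomposition must be tracked through all $n-1$ applications of $\beta$.
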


As a special case ($x=v=w=1$) of the relation (GC3), we obtain the relation (C3) for another $r$-loop space introduced by Conant~\cite{Con15} (see Section~\ref{subsec:another_r}).
We call (GC3) the {\em generalized Conant's third relation}.

Let $R$ be the subspace of $C_{1,2}\H$ generated by elements of the form on the left hand side of (GC3).
By definition, the space $\widetilde{\Omega}_2$ is the cokernel of 
\begin{equation} \label{eq:cokbeta}
\bigoplus_{n\ge 1} \beta^{n-1}: \bigoplus_{n\ge 1} C_{n,n+1} \H^{\ord}(n)
\to C_{1,2} \H.
\end{equation}
We show that the image of this map coincides with $R$.

For $t,u,v,w\in \mathcal{T}$, set 
\begin{equation} \label{eq:GC3_special}
\Theta_R(t,u,v,w) :=
\begin{tikzpicture}[baseline=-3pt, x=1.2mm, y=1.44mm]
\newsmalltheta{1}
\draw[ut] (-4.5,4) -- (-4.4,5.5) node[above=-3pt]{$v$};
\draw[ut] (4.5,4) -- (4.5,5.5) node[above=-3pt]{$w$};
\draw[ut] (-4.5,-4) -- (-4.5,-5.5) node[below=-3pt]{$t$};
\draw[ut] (4.5,-4) -- (4.5,-5.5) node[below=-3pt]{$u$};
\end{tikzpicture}
\, + \, 
\begin{tikzpicture}[baseline=-3pt, x=1.2mm, y=1.44mm]
\newsmalltheta{2}
\draw[ut] (-4.5,4) -- (-4.5,5.5) node[above=-3pt]{$v$};
\draw[ut] (4.5,4) -- (4.5,5.5) node[above=-3pt]{$w$};
\draw[ut] (-4.5,-4) -- (-4.5,-5.5) node[below=-3pt]{$t$};
\draw[ut] (4.5,-4) -- (4.5,-5.5) node[below=-3pt]{$u$};
\end{tikzpicture}
\, + \, 
\begin{tikzpicture}[baseline=-3pt, x=1.2mm, y=1.44mm]
\newsmalltheta{3}
\draw[ut] (-4.5,4) -- (-4.5,5.5) node[above=-3pt]{$v$};
\draw[ut] (4.5,4) -- (4.5,5.5) node[above=-3pt]{$w$};
\draw[ut] (-4.5,-4) -- (-4.5,-5.5) node[below=-3pt]{$t$};
\draw[ut] (4.5,-4) -- (4.5,-5.5) node[below=-3pt]{$u$};
\end{tikzpicture} \ .
\end{equation}
Any generator of $R$ can be written as a linear combination of elements of this type, since we can move the hairs colored with $x,y$ on the middle vertical edges to horizontal edges.
Therefore, the elements $\Theta_R(t,u,v,w)$ generate the space $R$.

Theorem~\ref{thm:omega2} is proved in the next subsection.
The rest of this subsection is devoted to some preliminaries.
\begin{lemma}\label{lem:elementsR}
The following elements in $C_{1,2}\H$ are contained in $R$.
\begin{enumerate}
\renewcommand{\theenumi}{\arabic{enumi}}
\renewcommand{\labelenumi}{(\theenumi)}
\item
For any $t,u,v,w,x,y \in \mathcal{T}$ and $a \in\mathcal{T}$, 
\[
\begin{tikzpicture}[baseline=-3pt, x=1.4mm, y=1.4mm]
\newbigtheta{3}
\draw[ut] (0,4) -- (-1.5,4) node[left=-3pt]{$x$};
\draw[ut] (0,-4) -- (-1.5,-4) node[left=-3pt]{$y$};
\draw[ut] (-6,6) -- (-6,7.5) node[above=-3pt]{$a$};
\draw[ut] (-3,6) -- (-3,7.5) node[above=-3pt]{$v$};
\draw[ut] (4,6) -- (4,7.5) node[above=-3pt]{$w$};
\draw[ut] (-3,-6) -- (-3,-7.5) node[below=-3pt]{$t$};
\draw[ut] (4,-6) -- (4,-7.5) node[below=-3pt]{$u$};
\end{tikzpicture}
\, - \, 
\begin{tikzpicture}[baseline=-3pt, x=1.4mm, y=1.4mm]
\newbigtheta{3}
\draw[ut] (0,4) -- (-1.5,4) node[left=-3pt]{$x$};
\draw[ut] (0,-4) -- (-1.5,-4) node[left=-3pt]{$y$};
\draw[ut] (-3,6) -- (-3,7.5) node[above=-3pt]{$v$};
\draw[ut] (4,6) -- (4,7.5) node[above=-3pt]{$w$};
\draw[ut] (-6,-6) -- (-6,-7.5) node[below=-2pt]{$a$};
\draw[ut] (-3,-6) -- (-3,-7.5) node[below=-4pt]{$t$};
\draw[ut] (4,-6) -- (4,-7.5) node[below=-3pt]{$u$};
\end{tikzpicture}
\, + \, 
\begin{tikzpicture}[baseline=-3pt, x=1.4mm, y=1.4mm]
\newbigtheta{2}
\draw[ut] (0,4) -- (-1.5,4) node[left=-3pt]{$x$};
\draw[ut] (0,-4) -- (-1.5,-4) node[left=-3pt]{$y$};
\draw[ut] (-6,6) -- (-6,7.5) node[above=-3pt]{$a$};
\draw[ut] (-3,6) -- (-3,7.5) node[above=-3pt]{$v$};
\draw[ut] (4,6) -- (4,7.5) node[above=-3pt]{$w$};
\draw[ut] (-3,-6) -- (-3,-7.5) node[below=-3pt]{$t$};
\draw[ut] (4,-6) -- (4,-7.5) node[below=-3pt]{$u$};
\end{tikzpicture}
\, - \, 
\begin{tikzpicture}[baseline=-3pt, x=1.4mm, y=1.4mm]
\newbigtheta{2}
\draw[ut] (0,4) -- (-1.5,4) node[left=-3pt]{$x$};
\draw[ut] (0,-4) -- (-1.5,-4) node[left=-3pt]{$y$};
\draw[ut] (-3,6) -- (-3,7.5) node[above=-3pt]{$v$};
\draw[ut] (4,6) -- (4,7.5) node[above=-3pt]{$w$};
\draw[ut] (-6,-6) -- (-6,-7.5) node[below=-2pt]{$a$};
\draw[ut] (-3,-6) -- (-3,-7.5) node[below=-4pt]{$t$};
\draw[ut] (4,-6) -- (4,-7.5) node[below=-3pt]{$u$};
\end{tikzpicture} \ .
\]

\item 
For any $t,v \in \mathcal{T}$ and $a \in  \mathcal{T}$, 
\[
\begin{tikzpicture}[baseline=-3pt, x=1.4mm, y=1.4mm]
\newsmalltheta{2}
\draw[ut] (-6,4) -- (-6,5.5) node[above=-3pt]{$a$};
\draw[ut] (-3,4) -- (-3,5.5) node[above=-3pt]{$v$};
\draw[ut] (-3,-4) -- (-3,-5.5) node[below=-3pt]{$t$};
\end{tikzpicture}
\ - \ 
\begin{tikzpicture}[baseline=-3pt, x=1.4mm, y=1.4mm]
\newsmalltheta{2}
\draw[ut] (-3,4) -- (-3,5.5) node[above=-3pt]{$v$};
\draw[ut] (-6,-4) -- (-6,-5.5) node[below=-2pt]{$a$};
\draw[ut] (-3,-4) -- (-3,-5.5) node[below=-4pt]{$t$};
\end{tikzpicture} \ .  
\]

\item 
For any $t,u,v,w,x \in \mathcal{T}$ and $a,b \in \mathcal{T}$, 
\[
\begin{tikzpicture}[baseline=-3pt, x=1.4mm, y=1.4mm]
\newbigtheta{2}
\draw[ut] (0,0) -- (-1.5,0) node[left=-3pt]{$x$};
\draw[ut] (-6,6) -- (-6,7.5) node[above=-3pt]{$a$};
\draw[ut] (-3,6) -- (-3,7.5) node[above=-3pt]{$v$};
\draw[ut] (3,6) -- (3,7.5) node[above=-3pt]{$w$};
\draw[ut] (6,6) -- (6,7.5) node[above=-3pt]{$b$}; 
\draw[ut] (-3,-6) -- (-3,-7.5) node[below=-4pt]{$t$};
\draw[ut] (3,-6) -- (3,-7.5) node[below=-3pt]{$u$};
\end{tikzpicture}
\, - \, 
\begin{tikzpicture}[baseline=-3pt, x=1.4mm, y=1.4mm]
\newbigtheta{2}
\draw[ut] (0,0) -- (-1.5,0) node[left=-3pt]{$x$};
\draw[ut] (-6,6) -- (-6,7.5) node[above=-3pt]{$a$};
\draw[ut] (-3,6) -- (-3,7.5) node[above=-3pt]{$v$};
\draw[ut] (3,6) -- (3,7.5) node[above=-3pt]{$w$};
\draw[ut] (-3,-6) -- (-3,-7.5) node[below=-4pt]{$t$};
\draw[ut] (3,-6) -- (3,-7.5) node[below=-3pt]{$u$}; 
\draw[ut] (6,-6) -- (6,-7.5) node[below=-3pt]{$b$};
\end{tikzpicture}
\, - \, 
\begin{tikzpicture}[baseline=-3pt, x=1.4mm, y=1.4mm]
\newbigtheta{2}
\draw[ut] (0,0) -- (-1.5,0) node[left=-3pt]{$x$};
\draw[ut] (-3,6) -- (-3,7.5) node[above=-3pt]{$v$};
\draw[ut] (6,6) -- (6,7.5) node[above=-3pt]{$b$}; 
\draw[ut] (3,6) -- (3,7.5) node[above=-3pt]{$w$};
\draw[ut] (-6,-6) -- (-6,-7.5) node[below=-3pt]{$a$};
\draw[ut] (-3,-6) -- (-3,-7.5) node[below=-4pt]{$t$};
\draw[ut] (3,-6) -- (3,-7.5) node[below=-3pt]{$u$};
\end{tikzpicture}
\, + \, 
\begin{tikzpicture}[baseline=-3pt, x=1.4mm, y=1.4mm]
\newbigtheta{2}
\draw[ut] (0,0) -- (-1.5,0) node[left=-3pt]{$x$};
\draw[ut] (-3,6) -- (-3,7.5) node[above=-3pt]{$v$};
\draw[ut] (3,6) -- (3,7.5) node[above=-3pt]{$w$};
\draw[ut] (-6,-6) -- (-6,-7.5) node[below=-3pt]{$a$};
\draw[ut] (-3,-6) -- (-3,-7.5) node[below=-4pt]{$t$};
\draw[ut] (6,-6) -- (6,-7.5) node[below=-3pt]{$b$}; 
\draw[ut] (4,-6) -- (4,-7.5) node[below=-3pt]{$u$};
\end{tikzpicture} \ .
\]
\end{enumerate}
\end{lemma}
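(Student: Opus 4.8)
The plan is to prove all three assertions by a single mechanism: realize each listed element as a difference of two generators of $R$, i.e.\ of two copies of the left-hand side of (GC3), chosen so that the unwanted theta-types cancel. Throughout I write $\Theta_1,\Theta_2,\Theta_3$ for the three hairy Lie graphs appearing, in this order, on the left-hand side of (GC3), so that every generator of $R$ has the shape $\Theta_1+\Theta_2+\Theta_3$ for some choice of the six tensors. The one elementary fact I will use repeatedly is this: if one of the three strands of a theta graph is solid along its entire length, then that strand carries no distinguished dotted edge, so the caterpillar of hairs on that edge does not record where along its solid subedges the ``strand'' is imagined to sit. Consequently, moving a hair from the top arc to the bottom arc along a fully solid strand leaves the element of $C_{1,2}\H$ unchanged.

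For assertion (1) I would form the two generators of $R$ built from the full (GC3)-element with middle hairs $x,y$, top-right hair $w$, bottom-right hair $u$, and with the extra hair $a$ placed, respectively, in the outer top-left slot (beside $v$) and in the outer bottom-left slot (beside $t$). In the $\Theta_1$-summand the left strand is solid, so by the fact above both placements of $a$ give the identical left-edge caterpillar $T,v,a,t,B$; hence the two $\Theta_1$-summands agree and cancel in the difference. In the $\Theta_2$- and $\Theta_3$-summands the left strand carries a dotted edge, which separates the top arc from the bottom arc, so the two placements are genuinely distinct. The difference of the two generators is therefore exactly $(\Theta_2^{a\text{-top}}-\Theta_2^{a\text{-bot}})+(\Theta_3^{a\text{-top}}-\Theta_3^{a\text{-bot}})$, which is the element in (1); being a difference of two elements of $R$, it lies in $R$. (This also explains why $\Theta_1$ is absent from (1): type $1$ is precisely the type whose left move is trivial.)

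Assertion (3) I would obtain by applying (1) twice. Invoking (1) once with $b$ in the outer top-right slot and once with $b$ in the outer bottom-right slot produces two elements of $R$, and I subtract them. The resulting $\Theta_2$-part is precisely the alternating four-term sum in (3); the $\Theta_3$-part vanishes, because in $\Theta_3$ the right strand is solid, so by the slide-along-a-solid-strand fact the top-versus-bottom position of $b$ is immaterial and the two $\Theta_3$-contributions cancel in the $b$-difference. Thus the element in (3) is again a difference of two elements of $R$, hence in $R$.

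Finally, assertion (2) is the specialization $u=w=x=y=1$ of (1), where (1) reads $(\Theta_2^{a\text{-top}}-\Theta_2^{a\text{-bot}})+(\Theta_3^{a\text{-top}}-\Theta_3^{a\text{-bot}})\in R$ with all hairs except $v,t,a$ trivial. Now the middle and right edges are bare, and the core symmetry (CS) interchanging these two parallel edges carries $\Theta_2$ to $\Theta_3$ while fixing the left edge and its hairs; hence $\Theta_2^{a\text{-top}}=\Theta_3^{a\text{-top}}$ and $\Theta_2^{a\text{-bot}}=\Theta_3^{a\text{-bot}}$, so the displayed element equals $2\,(\Theta_2^{a\text{-top}}-\Theta_2^{a\text{-bot}})$, and dividing by $2$ over $\Q$ gives (2). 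The main obstacle in the write-up will be the sign bookkeeping here: I must check that the interchange in (CS) produces the sign $+1$ — so that the two summands add rather than cancel — which amounts to verifying that the two AS-signs (one at each trivalent vertex) cancel and that the chosen orientations of the dotted edges are respected under the swap. The several ``slide along a solid strand is the identity'' steps underlying (1) and (3) likewise deserve a careful justification via the IHX and AS relations, but these are routine once the solidity of the relevant strand is noted.
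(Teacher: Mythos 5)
Your proposal is correct and is essentially the paper's own proof: part (1) is realized there as the difference $\Theta_R(t,u,va,w)-\Theta_R(at,u,v,w)$ of two (GC3)-generators, whose type-1 summands coincide and cancel by exactly your solid-strand observation; part (2) is obtained by specializing (1) at $u=w=x=y=1$ and applying the AS/core-symmetry identification of the type-2 and type-3 terms; and part (3) is the same four-term alternating combination of generators that your double application of (1) produces. The only cosmetic difference is that the paper first moves the middle hairs $x$, $y$ away by the IHX relation (reducing to $x=y=1$) before forming the generators, whereas you keep them in place, which is equally valid since $R$ is spanned by all (GC3) left-hand sides.
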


\begin{proof}
(1)
Moving the hairs colored with $x$ and colored with $y$ to the upper and lower horizontal edges by the IHX relation, respectively, we may assume that $x=y=1$.
Then, the element in (1) is equal to
\begin{align*}
& 
\begin{tikzpicture}[baseline=-3pt, x=1.4mm, y=1.4mm]
\newsmalltheta{1}
\draw[ut] (-6,4) -- (-6,5.5) node[above=-3pt]{$a$};
\draw[ut] (-3,4) -- (-3,5.5) node[above=-3pt]{$v$};
\draw[ut] (-3,-4) -- (-3,-5.5) node[below=-3pt]{$t$};
\draw[ut] (4,4) -- (4,5.5) node[above=-3pt]{$w$};
\draw[ut] (4,-4) -- (4,-5.5) node[below=-3pt]{$u$};
\end{tikzpicture}
\ + \ 
\begin{tikzpicture}[baseline=-3pt, x=1.4mm, y=1.4mm]
\newsmalltheta{2}
\draw[ut] (-6,4) -- (-6,5.5) node[above=-3pt]{$a$};
\draw[ut] (-3,4) -- (-3,5.5) node[above=-3pt]{$v$};
\draw[ut] (-3,-4) -- (-3,-5.5) node[below=-3pt]{$t$};
\draw[ut] (4,4) -- (4,5.5) node[above=-3pt]{$w$};
\draw[ut] (4,-4) -- (4,-5.5) node[below=-3pt]{$u$};
\end{tikzpicture}
\ + \ 
\begin{tikzpicture}[baseline=-3pt, x=1.4mm, y=1.4mm]
\newsmalltheta{3}
\draw[ut] (-6,4) -- (-6,5.5) node[above=-3pt]{$a$};
\draw[ut] (-3,4) -- (-3,5.5) node[above=-3pt]{$v$};
\draw[ut] (-3,-4) -- (-3,-5.5) node[below=-3pt]{$t$};
\draw[ut] (4,4) -- (4,5.5) node[above=-3pt]{$w$};
\draw[ut] (4,-4) -- (4,-5.5) node[below=-3pt]{$u$};
\end{tikzpicture} \\
& - 
\begin{tikzpicture}[baseline=-3pt, x=1.4mm, y=1.4mm]
\newsmalltheta{1}
\draw[ut] (-6,-4) -- (-6,-5.5) node[below=-3pt]{$a$};
\draw[ut] (-3,4) -- (-3,5.5) node[above=-3pt]{$v$};
\draw[ut] (-3,-4) -- (-3,-5.5) node[below=-3pt]{$t$};
\draw[ut] (4,4) -- (4,5.5) node[above=-3pt]{$w$};
\draw[ut] (4,-4) -- (4,-5.5) node[below=-3pt]{$u$};
\end{tikzpicture}
\ - \ 
\begin{tikzpicture}[baseline=-3pt, x=1.4mm, y=1.4mm]
\newsmalltheta{2}
\draw[ut] (-6,-4) -- (-6,-5.5) node[below=-3pt]{$a$};
\draw[ut] (-3,4) -- (-3,5.5) node[above=-3pt]{$v$};
\draw[ut] (-3,-4) -- (-3,-5.5) node[below=-3pt]{$t$};
\draw[ut] (4,4) -- (4,5.5) node[above=-3pt]{$w$};
\draw[ut] (4,-4) -- (4,-5.5) node[below=-3pt]{$u$};
\end{tikzpicture}
\ - \ 
\begin{tikzpicture}[baseline=-3pt, x=1.4mm, y=1.4mm]
\newsmalltheta{3}
\draw[ut] (-6,-4) -- (-6,-5.5) node[below=-3pt]{$a$};
\draw[ut] (-3,4) -- (-3,5.5) node[above=-3pt]{$v$};
\draw[ut] (-3,-4) -- (-3,-5.5) node[below=-3pt]{$t$};
\draw[ut] (4,4) -- (4,5.5) node[above=-3pt]{$w$};
\draw[ut] (4,-4) -- (4,-5.5) node[below=-3pt]{$u$};
\end{tikzpicture} \\ 
= \ & \, \Theta_R(t,u,va,w) - \Theta_R(at,u,v,w).
\end{align*}
Hence, it is an element of $R$.

(2)
This is a special case of (1). Setting $x=y=w=u=1$ in (1) and applying the AS relation, we obtain the assertion. 

(3)
By the IHX relation, we may assume that $x=1$.
Then, similarly to the proof of (1), the element in (3) is equal to 
\[
\Theta_R(t,u,va,bw) - \Theta_R(t,ub,va,w)
- \Theta_R(at,u,v,bw) + \Theta_R(at,ub,v,w).
\]
This completes the proof of the lemma.
\end{proof}

\begin{lemma} \label{lem:elementsR2}
The following elements in $C_{1,2}\H$ are contained in $R$.
\begin{enumerate}
    \item For any $t,v,x \in \mathcal{T}$ and $a \in \mathcal{T}$,
\[
\begin{tikzpicture}[baseline=-3pt, x=1.6mm, y=1.6mm]
\newdumbbell
\draw[ut] (0,0) -- (0,1.5) node[above=-3pt]{$x$};
\draw[ut] (-8,4) -- (-8,5.5) node[above=-3pt]{$a$};
\draw[ut] (-5,4) -- (-5,5.5) node[above=-3pt]{$v$};
\draw[ut] (-5,-4) -- (-5,-5.5) node[below=-3pt]{$t$}; 
\end{tikzpicture}
\ - \ 
\begin{tikzpicture}[baseline=-3pt, x=1.6mm, y=1.6mm]
\newdumbbell
\draw[ut] (0,0) -- (0,1.5) node[above=-3pt]{$x$};
\draw[ut] (-8,-4) -- (-8,-5.5) node[below=-3pt]{$a$};
\draw[ut] (-5,4) -- (-5,5.5) node[above=-3pt]{$v$};
\draw[ut] (-5,-4) -- (-5,-5.5) node[below=-3pt]{$t$}; 
\end{tikzpicture} \ . 
\]
\item For any $x \in \mathcal{T}$, 
\[
\begin{tikzpicture}[baseline=-3pt, x=1.6mm, y=1.6mm]
\newdumbbell 
\draw[ut] (0,0) -- (0,1.5) node[above=-3pt]{$x$};
\end{tikzpicture} \ .
\]
\end{enumerate}
\end{lemma}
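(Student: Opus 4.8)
The plan is to trade each dumbbell graph for theta graphs by means of the core change relation (CC), and then to recognize the resulting theta expressions as elements already shown to lie in $R$ in Lemma~\ref{lem:elementsR}.

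For part (1) it suffices to treat the case $x=1$: a general handle decoration reduces to this case by the core IHX relation (CIHX), the spectator hairs it produces on the right cycle being absorbed using Lemma~\ref{lem:elementsR}(1). With $x=1$, applying (CC) to each of the two dumbbells writes them as sums of two theta graphs,
\[
\begin{aligned}
\text{(first dumbbell)} &= \Theta(t,1,av,1) + \Theta(\overline{av},1,\overline{t},1),\\
\text{(second dumbbell)} &= \Theta(at,1,v,1) + \Theta(\overline{v},1,\overline{at},1),
\end{aligned}
\]
where $\Theta$ denotes the theta graph with solid middle edge. Subtracting, the difference splits into two brackets. The first bracket $\Theta(t,1,av,1)-\Theta(at,1,v,1)$ is precisely the element of Lemma~\ref{lem:elementsR}(2), hence lies in $R$. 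For the second bracket I would expand the antipodes, using that the antipode is an anti-homomorphism, so that $\overline{av}=\overline{v}\,\overline{a}$ and $\overline{at}=\overline{t}\,\overline{a}$; the bracket then becomes a transport of $\overline{a}$ across the left dotted edge along the opposite arc, which again lies in $R$ by Lemma~\ref{lem:elementsR}(2) together with the core symmetry (CS). Thus the total difference lies in $R$.

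For part (2) I would likewise reduce to $x=1$ by sliding the handle hair into one of the cycles through (CIHX) and reabsorbing the decorations it produces by part (1) and (CS). With $x=1$ the dumbbell carries no colorings at all, and (CC) collapses to $\text{(bare dumbbell)}=2\,\Theta(1,1,1,1)$, twice the uncolored core theta with solid middle edge. It then remains to place this core theta in $R$. Here I would use the cyclic symmetry of the theta graph contained in (CS): the rotation $e_1\mapsto e_2\mapsto e_3$ of the three parallel edges is an even permutation of the half-edges at each trivalent vertex, so it carries no AS sign and cyclically permutes the three uncolored core thetas (solid left, solid middle, solid right). Once the dotted-edge directions are checked to be preserved under this rotation, the three graphs coincide, whence $\Theta_R(1,1,1,1)=3\,\Theta(1,1,1,1)$ and the bare dumbbell equals $\tfrac{2}{3}\,\Theta_R(1,1,1,1)\in R$.

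I expect the main difficulty to be bookkeeping rather than anything conceptual. In part (1) one must track the antipode signs and the inner-versus-outer position of the transported hair precisely enough to realize the second bracket as an instance of Lemma~\ref{lem:elementsR}(2) under (CS). In part (2) the delicate point is to verify that the edge rotation respects the dotted-edge directions, so that the three core thetas are genuinely equal and not merely equal up to sign (which would sabotage the factor $\tfrac{1}{3}$). The reduction of a general handle decoration $x$ to the trivial case, while routine, likewise leans on part (1) and on Lemma~\ref{lem:elementsR}(1) to reabsorb the hairs created by the (CIHX) slide.
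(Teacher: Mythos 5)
Your part (1) at $x=1$ is correct and is in substance the paper's own argument: the core change relation (CC) is obtained precisely by applying the IHX relation to the handle, and your antipode/(CS) manipulation of the second bracket, $\Theta(\overline{va},1,\overline{t},1)-\Theta(\overline{v},1,\overline{at},1)=\Theta(t,1,va,1)-\Theta(at,1,v,1)$, reproduces the paper's factor $2$, after which Lemma~\ref{lem:elementsR}(2) applies. Your reduction to $x=1$, however, goes to the wrong cycle. Sliding $x$ into the \emph{right} cycle produces differences $\mathcal{D}(t,u',va,w')-\mathcal{D}(at,u',v,w')$ with nontrivial right-cycle words, which are not instances of the lemma; and Lemma~\ref{lem:elementsR}(1) does not absorb them: after (CC) and (CS) they become pure middle-solid theta differences $\Theta(t,u,va,w)-\Theta(at,u,v,w)$ with general $u,w$, whereas Lemma~\ref{lem:elementsR}(1) only places the \emph{sum} of such a type-2 difference with a type-3 (left-and-middle dotted) difference in $R$; the pure type-2 difference with nontrivial $u,w$ is not among the elements you have shown to lie in $R$. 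The paper avoids this entirely by moving $x$ into the \emph{left} cycle, where the new hairs are absorbed by the arbitrariness of $t$ and $v$ and no auxiliary lemma is needed.

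The genuine gap is in part (2). Sliding the whole handle word into a cycle leaves you with dumbbells $\mathcal{D}(s,1,1,1)$ carrying a nontrivial word $s$ on the left cycle, and part (1) together with (CS) cannot remove it: part (1) only shuttles a letter between the top and bottom arcs of the left cycle modulo $R$, and (CS) only flips or exchanges the cycles, so these moves yield at best $\mathcal{D}(s,1,1,1)\equiv\mathcal{D}(\overline{s},1,1,1)\pmod{R}$ and never decrease the number of cycle decorations; your base case (bare dumbbell $=2\,\Theta(1,1,1,1)$ via (CC), and $3\,\Theta(1,1,1,1)=\Theta_R(1,1,1,1)\in R$ from (GC3) with all slots trivial, both of which are correct and identical to the paper's, including the rotation check that the three bare core thetas coincide without sign) therefore does not finish the general case. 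The paper's reduction runs in the opposite direction: write $x=x'a$ with $a\in H$ and apply the IHX relation \emph{once} at the cycle end of the handle, obtaining the difference of the dumbbell with handle word $x'$ and $a$ on the top arc and the one with $a$ on the bottom arc; this is exactly an instance of part (1) \emph{with handle decoration $x'$ retained} --- the reason part (1) is stated for arbitrary $x\in\mathcal{T}$ --- so membership in $R$ follows in one step, and the handle never needs to be cleared. Repairing your write-up thus requires replacing the reduction in (2) by this single-letter peeling, and in (1) routing the hairs to the left cycle.
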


\begin{proof}
(1) Moving the hair colored with $x$ to the left part of the dumbbell by the IHX relation, we may assume that $x=1$.
Applying the IHX relation to the handle of the dumbbell, we have
\[
\begin{tikzpicture}[baseline=-3pt, x=1.2mm, y=1.2mm]
\newdumbbell
\draw[ut] (-8,4) -- (-8,5.5) node[above=-3pt]{$a$};
\draw[ut] (-5,4) -- (-5,5.5) node[above=-3pt]{$v$};
\draw[ut] (-5,-4) -- (-5,-5.5) node[below=-3pt]{$t$}; 
\end{tikzpicture}
\ - \ 
\begin{tikzpicture}[baseline=-3pt, x=1.2mm, y=1.2mm]
\newdumbbell
\draw[ut] (-8,-4) -- (-8,-5.5) node[below=-3pt]{$a$};
\draw[ut] (-5,4) -- (-5,5.5) node[above=-3pt]{$v$};
\draw[ut] (-5,-4) -- (-5,-5.5) node[below=-3pt]{$t$}; 
\end{tikzpicture}
= 
2 \Biggl( 
\begin{tikzpicture}[baseline=-3pt, x=1mm, y=1.2mm]
\newsmalltheta{2}
\draw (-6,4) -- (-6,5.5) node[above=-3pt]{$a$};
\draw[ut] (-3,4) -- (-3,5.5) node[above=-3pt]{$v$};
\draw[ut] (-3,-4) -- (-3,-5.5) node[below=-3pt]{$t$};
\end{tikzpicture}
\ - \ 
\begin{tikzpicture}[baseline=-3pt, x=1mm, y=1.2mm]
\newsmalltheta{2}
\draw[ut] (-3,4) -- (-3,5.5) node[above=-3pt]{$v$};
\draw (-6,-4) -- (-6,-5.5) node[below=-2pt]{$a$};
\draw[ut] (-3,-4) -- (-3,-5.5) node[below=-4pt]{$t$};
\end{tikzpicture} \Biggr)
\]
in $C_{1,2}\H$. 
By Lemma~\ref{lem:elementsR}(2), the right hand side lies in $R$.

(2) We may assume that $x$ is a pure tensor. 
Setting $t=u=v=w=x=y=1$ on the right hand side of (GC3), we see that the element 
\[
\begin{tikzpicture}[baseline=-3pt, x=1mm, y=1.4mm]
\newsmalltheta{1}
\end{tikzpicture} 
\ + \ 
\begin{tikzpicture}[baseline=-3pt, x=1mm, y=1.4mm]
\newsmalltheta{2}
\end{tikzpicture} 
\ + \ 
\begin{tikzpicture}[baseline=-3pt, x=1mm, y=1.4mm]
\newsmalltheta{3}
\end{tikzpicture}
= \ 3 \ 
\begin{tikzpicture}[baseline=-3pt, x=1mm, y=1.4mm]
\newsmalltheta{2}
\end{tikzpicture} 
\]
lies in $R$.
Furthermore, by the IHX relation we have 
\[
\begin{tikzpicture}[baseline=-3pt, x=1.2mm, y=1.2mm]
\newdumbbell 
\end{tikzpicture}
\ = \ 
2 \ \begin{tikzpicture}[baseline=-3pt, x=1mm, y=1.4mm]
\newsmalltheta{2}
\end{tikzpicture} \ .
\]
This proves the assertion for the case $\deg x=0$. 
If $\deg x >0$, then one can write $x = x'a$ with $a \in H$ and $x' \in \mathcal{T}$.
By the IHX relation, we have
\[
\begin{tikzpicture}[baseline=-3pt, x=1.2mm, y=1.2mm]
\newdumbbell 
\draw[ut] (0,0) -- (0,1.5) node[above=-3pt]{$x$};
\end{tikzpicture}
\ = \  
\begin{tikzpicture}[baseline=-3pt, x=1.2mm, y=1.2mm]
\newdumbbell 
\draw[ut] (0,0) -- (0,1.5) node[above=-3pt]{$x'$};
\draw (-6.5,4) -- (-6.5,5.5) node[above=-3pt]{$a$};
\end{tikzpicture}
\ - \
\begin{tikzpicture}[baseline=-3pt, x=1.2mm, y=1.2mm]
\newdumbbell 
\draw[ut] (0,0) -- (0,1.5) node[above=-3pt]{$x'$};
\draw (-6.5,-4) -- (-6.5,-5.5) node[below=-3pt]{$a$};
\end{tikzpicture} \ .
\]
By (1), the right hand side lies in $R$. 
\end{proof}

Let $X$ be a hairy Lie graph with ordered $n$ tripods $Y_1,\ldots,Y_n$. 
We regard $X$ as an element of $C_n\H^\ord(n)$. 
We arrange that any dotted edge connecting two tripods $Y_i$ and $Y_j$ with $i<j$ is directed from $Y_i$ to $Y_j$.
Here is an example: 
\[
\begin{tikzpicture}[x=4mm, y=4mm]
\draw (0,0)--(0,-1);
\draw (0,0)--({sqrt(3)/2},1/2);
\draw (0,0)--({-sqrt(3)/2},1/2);
\draw (4,0)--(4,-1);
\draw (4,0)--({sqrt(3)/2 + 4},1/2);
\draw (4,0)--({-sqrt(3)/2 + 4},1/2) node[above left=-3pt]{$a$};
\draw (8,0)--(8,-1);
\draw (8,0)--({sqrt(3)/2 + 8},1/2);
\draw (8,0)--({-sqrt(3)/2 + 8},1/2);
\draw (12,0)--(12,-1) node[below=-3pt]{$b$};
\draw (12,0)--({sqrt(3)/2 + 12},1/2);
\draw (12,0)--({-sqrt(3)/2 + 12},1/2);
\draw (16,0)--(16,-1);
\draw (16,0)--({sqrt(3)/2 + 16},1/2) node[above right=-3pt]{$c$};
\draw (16,0)--({-sqrt(3)/2 + 16},1/2);
\draw (0,-3.2) node{$Y_1$};
\draw (4,-3.2) node{$Y_2$};
\draw (8,-3.2) node{$Y_3$};
\draw (12,-3.2) node{$Y_4$};
\draw (16,-3.2) node{$Y_5$};
\draw[red, very thick, dotted, ->-] (-0.866,1/2) to[bend left=20] (-0.9,0.7) to[bend left=120] (0.9,0.7) to[bend left=20] (0.866,1/2);
\draw[red, very thick, dotted, ->-] (0,-1) to[bend right=20] (0.5,-1.5) to (3.5,-1.5) to[bend right=20] (4,-1);
\draw[red, very thick, dotted, ->-] ({sqrt(3)/2 + 4},1/2) to[bend left=40] ({-sqrt(3)/2 + 8},1/2);
\draw[red, very thick, dotted, ->-] (8,-1) to[bend right=20] (9,-2.2) to (15,-2.2) to[bend right=20] (16,-1);
\draw[red, very thick, dotted, ->-] ({sqrt(3)/2 + 8},1/2) to[bend left=40] ({-sqrt(3)/2 + 12},1/2);
\draw[red, very thick, dotted, ->-] ({sqrt(3)/2 + 12},1/2) to[bend left=40] ({-sqrt(3)/2 + 16},1/2);
\end{tikzpicture}
\]
Let $i \in \{1,\ldots,n\}$.
A {\em self-loop edge} of $Y_i$ is a dotted edge of $X$ whose both ends are attached to $Y_i$.
A {\em backward edge} of $Y_i$ is a dotted edge
attached to $Y_i$ whose other end is attached to $Y_j$ for some $j < i$. 
If $n\ge 2$ and there exists some $i \in \{ 1,\ldots, n\}$ such that $Y_i$ has no backward edge, then $\beta^{n-1}(X) = 0$. 
For $i \ge 2$, let $N(Y_i)$ be the number of self-loop edges and backward edges of $Y_i$ minus one.
We also set $N(Y_1) = 1$ if $Y_1$ has a self-loop edge, and $N(Y_1) =0$ otherwise.
Note that one backward edge of $Y_i$ will be changed to a solid edge when we apply to $X$ the $(i-1)$-th application of $\beta$, and the number $N(Y_i)$ counts ``extra'' dotted edges attached to $Y_i$ which become dotted edges of a term in $\beta^{n-1}(X)$.
Thus, if $\beta^{n-1}(X)\ne 0$ and lies in $C_{1,r}\H(n)$,
namely it has $r$ dotted edges, 
then $X \in C_{n,n-1+r}\H^\ord(n)$ and $\sum_{i=1}^n N(Y_i) = r$.

\subsection{Proof of Theorem~\ref{thm:omega2}}
We divide the proof into several steps. 

{\em Step 1}. 
We first show that $\beta^{n-1}(C_{n,n+1}\H^\ord(n))\subset R$ for any $n$.
Let $X \in C_{n,n+1}\H^\ord(n)$ be a hairy Lie graph such that $\beta^{n-1}(X) \neq 0$. 
Then, for any $2 \le i \le n$, the tripod $Y_i$ has at least one backward edge. 
Since $\sum_{i=1}^n N(Y_i) = 2$, there are two cases: 
\begin{enumerate}
\item[(I)] $N(Y_i)=2$ for some $i \in \{2,\ldots, n\}$,
\item[(II)] $N(Y_i)=1$ and $N(Y_{i'})=1$ for some $i, i' \in \{1,\ldots,n\}$ with $i \ne i'$.
\end{enumerate}

In the case (I), $Y_i$ is incident to three backward edges and $\beta^{n-1}(X)$ is a sum of three terms, which is a generator of $R$ (with $v = w = x = 1$).

In the case (II), we have three subcases: 
\begin{enumerate}
\item[(II-i)] Both of $Y_i$ and $Y_{i'}$ have self-loop edges; 
\item[(II-ii)] $Y_i$ has a self-loop edge and $Y_{i'}$ does not have a self-loop edge; 
\item[(II-iii)] Neither $Y_i$ nor $Y_{i'}$ has a self-loop edge.
\end{enumerate}

In the case (II-i), $\beta^{n-1}(X)$ consists of only one term, since $Y_j$ has only one backward edge for all $j \in \{ 2, \ldots, n\}$.
In $\beta^{n-1}(X)$, there is a unique simple edge path connecting the trivalent vertices of $Y_i$ and $Y_{i'}$.
Using the IHX and AS relations, one can move the $H$-colored hairs to this edge.
Thus $\beta^{n-1}(X)$ is a linear combination of hairy Lie graphs of the form 
\[
\begin{tikzpicture}[baseline=-3pt, x=1.6mm, y=1.6mm]
\newdumbbell 
\draw[ut] (0,0) -- (0,1.5) node[above=-3pt]{$x$};
\end{tikzpicture} \ .
\]
The two dotted edges in the figure correspond to the self-loop edges of $Y_i$ and $Y_{i'}$.
By Lemma~\ref{lem:elementsR2}(2), we have $\beta^{n-1}(X) \in R$. 

In the case (II-ii), $Y_{i'}$ has two backward edges.
There are two terms in $\beta^{n-1}(X)$, each of which corresponds to the choice of a backward edge of $Y_{i'}$.
By using the IHX and AS relations in the same way as in the case (II-i), we see that $\beta^{n-1}(X)$ is a linear combination of elements of the form 
\[
\begin{tikzpicture}[baseline=0pt, x=1.4mm, y=1.4mm]
\draw (-3,0) -- (3,0);
\draw[red, very thick, dotted, -<-] (-10,2) -- (-10,-2); 
\draw (10,4) to (4,4) to[bend right=30] (3,3) to (3,-3) to[bend right=30] (4,-4) to (10,-4);
\draw (14,-4) -- (15,-4) to[bend right=30] (16,-3) -- (16,3) to[bend right=30] (15,4) -- (14,4);
\draw (16,0) -- (18,0) node[right=-3pt]{$T$};
\draw (-10,2) to (-10,3) to [bend left=30] (-9,4) to (-4,4) to[bend left=30] (-3,3) to (-3,-3) to[bend left=30] (-4,-4) to (-9,-4) to[bend left=30] (-10,-3) to (-10,-2);
\draw (10,4) -- (14,4);
\draw[red, very thick, dotted, ->-] (10,-4) -- (14,-4);
\draw[ut] (0,0) -- (0,1.5) node[above=-3pt]{$t$};
\draw[ut] (6,4) -- (6,5.5) node[above=-3pt]{$w$};
\draw[ut] (6,-4) -- (6,-5.5) node[below=-3pt]{$u$};
\end{tikzpicture}
\ + \hspace{1em} 
\begin{tikzpicture}[baseline=0pt, x=1.4mm, y=1.4mm]
\draw (-3,0) -- (3,0);
\draw[red, very thick, dotted, -<-] (-10,2) -- (-10,-2); 
\draw (10,4) to (4,4) to[bend right=30] (3,3) to (3,-3) to[bend right=30] (4,-4) to (10,-4);
\draw (14,-4) -- (15,-4) to[bend right=30] (16,-3) -- (16,3) to[bend right=30] (15,4) -- (14,4);
\draw (16,0) -- (18,0) node[right=-3pt]{$T$.};
\draw (-10,2) to (-10,3) to [bend left=30] (-9,4) to (-4,4) to[bend left=30] (-3,3) to (-3,-3) to[bend left=30] (-4,-4) to (-9,-4) to[bend left=30] (-10,-3) to (-10,-2);
\draw (10,-4) -- (14,-4);
\draw[red, very thick, dotted, ->-] (10,4) -- (14,4);
\draw[ut] (0,0) -- (0,1.5) node[above=-3pt]{$t$};
\draw[ut] (6,4) -- (6,5.5) node[above=-3pt]{$w$};
\draw[ut] (6,-4) -- (6,-5.5) node[below=-3pt]{$u$};
\end{tikzpicture}
\]
Here, $T$ is a rooted $H$-colored tree-shaped Jacobi diagram and $t,u,w \in \mathcal{T}$.
In both terms, the left dotted edge corresponds to the self-loop edge of $Y_i$ and the rightmost trivalent vertex to the trivalent vertex of $Y_{i'}$. 
Applying Lemma~\ref{lem:elementsR2}(1), we conclude that $\beta^{n-1}(X) \in R$. 

In the case (II-iii), 
both $Y_i$ and $Y_{i'}$ have two backward edges. 
We may assume $i<i'$ without loss of generality.
There is only one term in $\beta^{i-2}(X)$.
Then $\beta^{i-1}(X)$ consists of two terms, each obtained from $\beta^{i-2}(X)$ by changing one backward edge of $Y_i$ to a solid edge.
Since $N(Y_j) = 0$ for $i < j < i'$, $\beta^{i'-2}(X)$ also consists of two terms.
Each term of $\beta^{i'-2}(X)$ has $n - i' + 2$ trees, and the union of the first tree and the backward edge of $Y_i$ which remains a dotted edge is a $1$-looped graph.
The difference between the two terms is the place of the dotted edge.
Hence, by using the IHX and AS relations, we may assume that $\beta^{i'-2}(X)$ is a linear combination of elements of the following form: 
\[
\begin{tikzpicture}[baseline=-3pt, x=1.1mm, y=1.1mm]
\draw (3,0) to (21,0) to [bend left=30] (22,-1) to (22,-5) to [bend left=30] (21,-6) to (3, -6); 
\draw (-3,0) to (-21,0) to [bend right=30] (-22,-1) to (-22,-5) to [bend right=30] (-21,-6) to (-3,-6); 
\draw [red, very thick, dotted, ->-](-3,0) to (3,0);
\draw (-3,-6) to (3,-6);
\draw (-18,0) -- (-18,3) node[above=-3pt] {$V_1$};
\draw (-11,2) node {$\cdots$};
\draw (-6,0) -- (-6,3) node[above=-3pt] {$V_j$};
\draw (6,0) -- (6,3) node[above=-3pt] {$V_{j+1}$};
\draw (12,2) node {$\cdots$};
\draw (18,0) -- (18,3) node[above=-3pt] {$V_{i'-1}$};
\end{tikzpicture}
\ - \ \, 
\begin{tikzpicture}[baseline=-3pt, x=1.1mm, y=1.1mm]
\draw (3,0) to (21,0) to [bend left=30] (22,-1) to (22,-5) to [bend left=30] (21,-6) to (3, -6); 
\draw (-3,0) to (-21,0) to [bend right=30] (-22,-1) to (-22,-5) to [bend right=30] (-21,-6) to (-3,-6); 
\draw (-3,0) to (3,0);
\draw [red, very thick, dotted, -<-](-3,-6) to (3,-6);
\draw (-18,0) -- (-18,3) node[above=-3pt] {$V_1$};
\draw (-11,2) node {$\cdots$};
\draw (-6,0) -- (-6,3) node[above=-3pt] {$V_j$};
\draw (6,0) -- (6,3) node[above=-3pt] {$V_{j+1}$};
\draw (12,2) node {$\cdots$};
\draw (18,0) -- (18,3) node[above=-3pt] {$V_{i'-1}$};
\end{tikzpicture}.
\]
Here, we draw the first tree only, and $V_1,\ldots,V_{i'-1}$ are its univalent vertices.
The vertices $V_1, \ldots, V_{i'-1}$ are either colored with elements of $H$ or attached to the other tripods $Y_{i'}, \ldots, Y_n$ by dotted edges. 
In particular, the two backward edges of $Y_{i'}$ are connected to one of the vertices $V_1,\ldots,V_{i'-1}$. 
There are two cases.
First, if two backward edges of $Y_{i'}$ are both connected to $V_1,\ldots, V_j$, then $\beta^{n-1}(X)$ is a linear combination of elements of the following form:
\begin{align*}
&\begin{tikzpicture}[baseline=-3pt, x=1.1mm, y=1mm]
\draw (3,0) to (21,0) to [bend left=30] (22,-1) to (22,-5) to [bend left=30] (21,-6) to (3, -6); 
\draw (-3,0) to (-27,0) to [bend right=30] (-28,-1) to (-28,-5) to [bend right=30] (-27,-6) to (-3,-6); 
\draw [red, very thick, dotted, ->-](-3,0) to (3,0);
\draw (-3,-6) to (3,-6);
\draw (-25,0) -- (-25,3) node[above=-3pt]{$T_1$};
\draw (-21,2) node {$\cdots$};
\draw (-18,6) to [bend left=30] (-17,7) to (-13,7) to [bend left=30] (-12,6);
\draw (-18,0) -- (-18,2);
\draw (-12,0) -- (-12,2);
\draw[red, very thick, dotted, ->-] (-18,2) -- (-18,6); 
\draw (-12,2) -- (-12,6); 
\draw (-15,7) -- (-15,10) node[above=-3pt]{$T_{i'}$};
\draw (-15,2) node {$\cdots$};
\draw (-9,2) node {$\cdots$};
\draw (-6,0) -- (-6,3) node[above=-3pt]{$T_j$};
\draw (6,0) -- (6,3) node[above=-3pt]{$T_{j+1}$};
\draw (12,2) node {$\cdots$};
\draw (18,0) -- (18,3) node[above=-3pt]{$T_{i'-1}$};
\end{tikzpicture}
- \ 
\begin{tikzpicture}[baseline=-3pt, x=1.1mm, y=1mm]
\draw (3,0) to (21,0) to [bend left=30] (22,-1) to (22,-5) to [bend left=30] (21,-6) to (3, -6); 
\draw (-3,0) to (-27,0) to [bend right=30] (-28,-1) to (-28,-5) to [bend right=30] (-27,-6) to (-3,-6); 
\draw [red, very thick, dotted, ->-](-3,0) to (3,0);
\draw (-3,-6) to (3,-6);
\draw (-25,0) -- (-25,3) node[above=-3pt]{$T_1$};
\draw (-21,2) node {$\cdots$};
\draw (-18,6) to [bend left=30] (-17,7) to (-13,7) to [bend left=30] (-12,6);
\draw (-18,0) -- (-18,2);
\draw (-12,0) -- (-12,2);
\draw (-18,2) -- (-18,6); 
\draw[red, very thick, dotted, -<-] (-12,2) -- (-12,6); 
\draw (-15,7) -- (-15,10) node[above=-3pt]{$T_{i'}$};
\draw (-15,2) node {$\cdots$};
\draw (-9,2) node {$\cdots$};
\draw (-6,0) -- (-6,3) node[above=-3pt]{$T_j$};
\draw (6,0) -- (6,3) node[above=-3pt]{$T_{j+1}$};
\draw (12,2) node {$\cdots$};
\draw (18,0) -- (18,3) node[above=-3pt]{$T_{i'-1}$};
\end{tikzpicture}\\
&- \ 
\begin{tikzpicture}[baseline=-3pt, x=1.1mm, y=1mm]
\draw (3,0) to (21,0) to [bend left=30] (22,-1) to (22,-5) to [bend left=30] (21,-6) to (3, -6); 
\draw (-3,0) to (-27,0) to [bend right=30] (-28,-1) to (-28,-5) to [bend right=30] (-27,-6) to (-3,-6); 
\draw (-3,0) to (3,0);
\draw [red, very thick, dotted, -<-](-3,-6) to (3,-6);
\draw (-25,0) -- (-25,3) node[above=-3pt]{$T_1$};
\draw (-21,2) node {$\cdots$};
\draw (-18,6) to [bend left=30] (-17,7) to (-13,7) to [bend left=30] (-12,6);
\draw (-18,0) -- (-18,2);
\draw (-12,0) -- (-12,2);
\draw[red, very thick, dotted, ->-] (-18,2) -- (-18,6); 
\draw (-12,2) -- (-12,6); 
\draw (-15,7) -- (-15,10) node[above=-3pt]{$T_{i'}$};
\draw (-15,2) node {$\cdots$};
\draw (-9,2) node {$\cdots$};
\draw (-6,0) -- (-6,3) node[above=-3pt]{$T_j$};
\draw (6,0) -- (6,3) node[above=-3pt]{$T_{j+1}$};
\draw (12,2) node {$\cdots$};
\draw (18,0) -- (18,3) node[above=-3pt]{$T_{i'-1}$};
\end{tikzpicture}
+ \ 
\begin{tikzpicture}[baseline=-3pt, x=1.1mm, y=1mm]
\draw (3,0) to (21,0) to [bend left=30] (22,-1) to (22,-5) to [bend left=30] (21,-6) to (3, -6); 
\draw (-3,0) to (-27,0) to [bend right=30] (-28,-1) to (-28,-5) to [bend right=30] (-27,-6) to (-3,-6); 
\draw (-3,0) to (3,0);
\draw [red, very thick, dotted, -<-](-3,-6) to (3,-6);
\draw (-25,0) -- (-25,3) node[above=-3pt]{$T_1$};
\draw (-21,2) node {$\cdots$};
\draw (-18,6) to [bend left=30] (-17,7) to (-13,7) to [bend left=30] (-12,6);
\draw (-18,0) -- (-18,2);
\draw (-12,0) -- (-12,2);
\draw (-18,2) -- (-18,6); 
\draw[red, very thick, dotted, -<-] (-12,2) -- (-12,6); 
\draw (-15,7) -- (-15,10) node[above=-3pt]{$T_{i'}$};
\draw (-15,2) node {$\cdots$};
\draw (-9,2) node {$\cdots$};
\draw (-6,0) -- (-6,3) node[above=-3pt]{$T_j$};
\draw (6,0) -- (6,3) node[above=-3pt]{$T_{j+1}$};
\draw (12,2) node {$\cdots$};
\draw (18,0) -- (18,3) node[above=-3pt]{$T_{i'-1}$};
\end{tikzpicture}.
\end{align*}
Here, the tripod $Y_{i'}$ is depicted just below $T_{i'}$.
By Lemma~\ref{lem:elementsR}(3),
this is contained in $R$.
Next, if two backward edges of $Y_{i'}$ are connected to one of $V_1$, $\ldots$, $V_j$ and one of $V_{j+1}$, $\ldots$, $V_{i'-1}$, then $\beta^{n-1}(X)$ is a linear combination of
\begin{align*}
&\begin{tikzpicture}[baseline=-3pt, x=1.2mm, y=1.0mm]
\draw (3,0) to (21,0) to [bend left=30] (22,-1) to (22,-5) to [bend left=30] (21,-6) to (3, -6); 
\draw (-3,0) to (-21,0) to [bend right=30] (-22,-1) to (-22,-5) to [bend right=30] (-21,-6) to (-3,-6); 
\draw [red, very thick, dotted, ->-] (-3,0) to (3,0);
\draw (-3,-6) to (3,-6);
\draw (-18,0) -- (-18,3) node[above=-3pt]{$T_1$};
\draw (-12,0) to (-12,10) to [bend left=30] (-11,11); 
\draw (-15,2) node {$\cdots$};
\draw (-9,2) node {$\cdots$};
\draw (-6,0) -- (-6,3) node[above=-3pt]{$T_j$};
\draw (6,0) -- (6,3) node[above=-3pt]{$T_{j+1}$};
\draw (9,2) node {$\cdots$};
\draw (12,0) to (12,10) to [bend right=30] (11,11); 
\draw (15,2) node {$\cdots$};
\draw [red, very thick, dotted, ->-](-11,11) to (-3,11);
\draw (-3,11) to (3,11);
\draw (3,11) to (11,11);
\draw (0,11) -- (0,14) node [above=-3pt]{$T_{i'}$};
\draw (18,0) -- (18,3) node[above=-3pt]{$T_{i'-1}$};
\end{tikzpicture}
- \ 
\begin{tikzpicture}[baseline=-3pt, x=1.2mm, y=1.0mm]
\draw (3,0) to (21,0) to [bend left=30] (22,-1) to (22,-5) to [bend left=30] (21,-6) to (3, -6); 
\draw (-3,0) to (-21,0) to [bend right=30] (-22,-1) to (-22,-5) to [bend right=30] (-21,-6) to (-3,-6); 
\draw [red, very thick, dotted, ->-] (-3,0) to (3,0);
\draw (-3,-6) to (3,-6);
\draw (-18,0) -- (-18,3) node[above=-3pt]{$T_1$};
\draw (-12,0) to (-12,10) to [bend left=30] (-11,11); 
\draw (-15,2) node {$\cdots$};
\draw (-9,2) node {$\cdots$};
\draw (-6,0) -- (-6,3) node[above=-3pt]{$T_j$};
\draw (6,0) -- (6,3) node[above=-3pt]{$T_{j+1}$};
\draw (9,2) node {$\cdots$};
\draw (12,0) to (12,10) to [bend right=30] (11,11); 
\draw (15,2) node {$\cdots$};
\draw (-11,11) to (-3,11);
\draw (-3,11) to (3,11);
\draw [red, very thick, dotted, ->-](3,11) to (11,11);
\draw (0,11) -- (0,14) node [above=-3pt]{$T_{i'}$};
\draw (18,0) -- (18,3) node[above=-3pt]{$T_{i'-1}$};
\end{tikzpicture}\\
&- \ 
\begin{tikzpicture}[baseline=-3pt, x=1.2mm, y=1.0mm]
\draw (3,0) to (21,0) to [bend left=30] (22,-1) to (22,-5) to [bend left=30] (21,-6) to (3, -6); 
\draw (-3,0) to (-21,0) to [bend right=30] (-22,-1) to (-22,-5) to [bend right=30] (-21,-6) to (-3,-6); 
\draw  (-3,0) to (3,0);
\draw [red, very thick, dotted, -<-] (-3,-6) to (3,-6);
\draw (-18,0) -- (-18,3) node[above=-3pt]{$T_1$};
\draw (-12,0) to (-12,10) to [bend left=30] (-11,11); 
\draw (-15,2) node {$\cdots$};
\draw (-9,2) node {$\cdots$};
\draw (-6,0) -- (-6,3) node[above=-3pt]{$T_j$};
\draw (6,0) -- (6,3) node[above=-3pt]{$T_{j+1}$};
\draw (9,2) node {$\cdots$};
\draw (12,0) to (12,10) to [bend right=30] (11,11); 
\draw (15,2) node {$\cdots$};
\draw [red, very thick, dotted, ->-](-11,11) to (-3,11);
\draw (-3,11) to (3,11);
\draw (3,11) to (11,11);
\draw (0,11) -- (0,14) node[above=-3pt]{$T_{i'}$};
\draw (18,0) -- (18,3) node[above=-3pt]{$T_{i'-1}$};
\end{tikzpicture}
+ \ 
\begin{tikzpicture}[baseline=-3pt, x=1.2mm, y=1.0mm]
\draw (3,0) to (21,0) to [bend left=30] (22,-1) to (22,-5) to [bend left=30] (21,-6) to (3, -6); 
\draw (-3,0) to (-21,0) to [bend right=30] (-22,-1) to (-22,-5) to [bend right=30] (-21,-6) to (-3,-6); 
\draw  (-3,0) to (3,0);
\draw[red, very thick, dotted, -<-](-3,-6) to (3,-6);
\draw (-18,0) -- (-18,3) node[above=-3pt]{$T_1$};
\draw (-12,0) to (-12,10) to [bend left=30] (-11,11); 
\draw (-15,2) node {$\cdots$};
\draw (-9,2) node {$\cdots$};
\draw (-6,0) -- (-6,3) node[above=-3pt]{$T_j$};
\draw (6,0) -- (6,3) node[above=-3pt]{$T_{j+1}$};
\draw (9,2) node {$\cdots$};
\draw (12,0) to (12,10) to [bend right=30] (11,11); 
\draw (15,2) node {$\cdots$};
\draw (-11,11) to (-3,11);
\draw (-3,11) to (3,11);
\draw [red, very thick, dotted, ->-](3,11) to (11,11);
\draw (0,11) -- (0,14) node [above=-3pt]{$T_{i'}$};
\draw (18,0) -- (18,3) node[above=-3pt]{$T_{i'-1}$};
\end{tikzpicture}.
\end{align*}
By Lemma~\ref{lem:elementsR}(1),
this is contained in $R$.
This settles the case (II-iii) and completes the proof of $\beta^{n-1}(C_{n,n+1} \H^{\ord}(n)) \subset R$.

{\em Step 2}. 
Conversely, we show that the space $R$ is contained in the image of the map \eqref{eq:cokbeta}.
It is sufficient to show that, for any pure tensors $t,u,v,w \in \mathcal{T}$, the element $\Theta_R(t,u,v,w)$ in \eqref{eq:GC3_special} with $n$ trivalent vertices lies in $\beta^{n-1}(C_{n,n+1}\H^{\ord}(n))$.
We prove this by induction on $\deg v + \deg w$.

{\em Step 2-$(i)$}.
First consider the case $\deg v + \deg w = 0$.
The following example proves the assertion for $(\deg t, \deg u) = (3,2)$, and it is easy to generalize it for any $t$ and $u$. 
\begin{align*}
& \begin{tikzpicture}[baseline=0pt, x=1.2mm, y=1.2mm]
\draw[red, very thick, dotted, ->-] (0,-2) -- (0,2);
\draw[red, very thick, dotted, ->-] (-23,-2) -- (-23,2);
\draw[red, very thick, dotted, ->-] (16,-2) -- (16,2);
\draw[red, very thick, dotted, ->-] (2,-4) -- (5,-4);
\draw[red, very thick, dotted, ->-] (9,-4) -- (12,-4);
\draw[red, very thick, dotted, ->-] (-2,-4) -- (-5,-4);
\draw[red, very thick, dotted, ->-] (-9,-4) -- (-12,-4);
\draw[red, very thick, dotted, ->-] (-16,-4) -- (-19,-4);
\draw (0,-2) -- (0,-4);
\draw (-2,-4) -- (2,-4); 
\draw (2,-2.25) node{\footnotesize ${\sf \sharp 1}$};
\draw (5,-4) -- (9,-4);
\draw (7,-4) -- (7,-5.5) node[below=-3pt]{$u_1$};
\draw (7,-2.25) node{\footnotesize ${\sf \sharp 2}$};
\draw (12,-4) -- (15,-4) to[bend right=30] (16,-3) -- (16,-2);
\draw (14,-4) -- (14,-5.5) node[below=-3pt]{$u_2$};
\draw (14,-2.25) node{\footnotesize ${\sf \sharp 3}$};
\draw (-5,-4) -- (-9,-4);
\draw (-7,-4) -- (-7,-5.5) node[below=-3pt]{$t_3$};
\draw (-7,-2.25) node{\footnotesize ${\sf \sharp 4}$};
\draw (-12,-4) -- (-16,-4);
\draw (-14,-4) -- (-14,-5.5) node[below=-3pt]{$t_2$};
\draw (-14,-2.25) node{\footnotesize ${\sf \sharp 5}$};
\draw (-19,-4) -- (-22,-4) to[bend left=30] (-23,-3) -- (-23,-2);
\draw (-21,-4) -- (-21,-5.5) node[below=-3pt]{$t_1$};
\draw (-21,-2.25) node{\footnotesize ${\sf \sharp 6}$};
\draw (0,2) -- (0,4);
\draw (15,4) -- (-22,4);
\draw (15,4) to[bend left=30] (16,3) -- (16,2);
\draw (-22,4) to[bend right=30] (-23,3) -- (-23,2);
\draw (0,6.5) node{\footnotesize ${\sf \sharp 7}$};
\end{tikzpicture}
\ \overset{\beta^5}{\longmapsto} \ 
\begin{tikzpicture}[baseline=0pt, x=1.2mm, y=1.2mm]
\draw (2,-2.25) node{\footnotesize ${\sf \sharp 1}$};
\draw (0,6.5) node{\footnotesize ${\sf \sharp 2}$};
\draw[red, very thick, dotted, ->-] (0,-2) -- (0,2);
\draw[red, very thick, dotted, ->-] (12,-2) -- (12,2);
\draw[red, very thick, dotted, ->-] (-12,-2) -- (-12,2);
\draw (0,2) -- (0,4);
\draw (0,-2) -- (0,-4);
\draw (-12,-2) -- (-12,-3) to[bend right=30] (-11,-4) -- (11,-4) to[bend right=30] (12,-3) -- (12,-2);
\draw (-12,2) -- (-12,3) to[bend left=30] (-11,4) -- (11,4) to[bend left=30] (12,3) -- (12,2);
\draw (-9,-4) -- (-9,-5.5) node[below=-3pt]{$t_1$};
\draw (-6,-4) -- (-6,-5.5) node[below=-3pt]{$t_2$};
\draw (-3,-4) -- (-3,-5.5) node[below=-3pt]{$t_3$};
\draw (4,-4) -- (4,-5.5) node[below=-3pt]{$u_1$};
\draw (8,-4) -- (8,-5.5) node[below=-3pt]{$u_2$};
\end{tikzpicture} \\ 
\overset{\beta}{\longmapsto} \ &
\begin{tikzpicture}[baseline=0pt, x=1.2mm, y=1.2mm]
\draw[red, very thick, dotted, ->-] (0,-2) -- (0,2);
\draw[red, very thick, dotted, ->-] (12,-2) -- (12,2);
\draw (-12,-2) -- (-12,2);
\draw (0,2) -- (0,4);
\draw (0,-2) -- (0,-4);
\draw (-12,-2) -- (-12,-3) to[bend right=30] (-11,-4) -- (11,-4) to[bend right=30] (12,-3) -- (12,-2);
\draw (-12,2) -- (-12,3) to[bend left=30] (-11,4) -- (11,4) to[bend left=30] (12,3) -- (12,2);
\draw (-9,-4) -- (-9,-5.5) node[below=-3pt]{$t_1$};
\draw (-6,-4) -- (-6,-5.5) node[below=-3pt]{$t_2$};
\draw (-3,-4) -- (-3,-5.5) node[below=-3pt]{$t_3$};
\draw (4,-4) -- (4,-5.5) node[below=-3pt]{$u_1$};
\draw (8,-4) -- (8,-5.5) node[below=-3pt]{$u_2$};
\end{tikzpicture}
\ + \ 
\begin{tikzpicture}[baseline=0pt, x=1.2mm, y=1.2mm]
\draw (0,-2) -- (0,2);
\draw[red, very thick, dotted, ->-] (12,-2) -- (12,2);
\draw[red, very thick, dotted, ->-] (-12,-2) -- (-12,2);
\draw (0,2) -- (0,4);
\draw (0,-2) -- (0,-4);
\draw (-12,-2) -- (-12,-3) to[bend right=30] (-11,-4) -- (11,-4) to[bend right=30] (12,-3) -- (12,-2);
\draw (-12,2) -- (-12,3) to[bend left=30] (-11,4) -- (11,4) to[bend left=30] (12,3) -- (12,2);
\draw (-9,-4) -- (-9,-5.5) node[below=-3pt]{$t_1$};
\draw (-6,-4) -- (-6,-5.5) node[below=-3pt]{$t_2$};
\draw (-3,-4) -- (-3,-5.5) node[below=-3pt]{$t_3$};
\draw (4,-4) -- (4,-5.5) node[below=-3pt]{$u_1$};
\draw (8,-4) -- (8,-5.5) node[below=-3pt]{$u_2$};
\end{tikzpicture}
\ + \ 
\begin{tikzpicture}[baseline=0pt, x=1.2mm, y=1.2mm]
\draw[red, very thick, dotted, ->-] (0,-2) -- (0,2);
\draw (12,-2) -- (12,2);
\draw[red, very thick, dotted, ->-] (-12,-2) -- (-12,2);
\draw (0,2) -- (0,4);
\draw (0,-2) -- (0,-4);
\draw (-12,-2) -- (-12,-3) to[bend right=30] (-11,-4) -- (11,-4) to[bend right=30] (12,-3) -- (12,-2);
\draw (-12,2) -- (-12,3) to[bend left=30] (-11,4) -- (11,4) to[bend left=30] (12,3) -- (12,2);
\draw (-9,-4) -- (-9,-5.5) node[below=-3pt]{$t_1$};
\draw (-6,-4) -- (-6,-5.5) node[below=-3pt]{$t_2$};
\draw (-3,-4) -- (-3,-5.5) node[below=-3pt]{$t_3$};
\draw (4,-4) -- (4,-5.5) node[below=-3pt]{$u_1$};
\draw (8,-4) -- (8,-5.5) node[below=-3pt]{$u_2$};
\end{tikzpicture} \ .
\end{align*}

{\em Step 2-$(ii)$}.
Next, we consider the case $\deg v + \deg w = 1$.
We may assume that $(\deg v, \deg w) = (1,0)$.
Let $U \in C_{1,2}\H$ be the following element:
\[
U = \  
\begin{tikzpicture}[baseline=-3pt, x=1mm, y=1.2mm]
\newsmalltheta{2}
\draw[ut] (-3,-4) -- (-3,-5.5) node[below=-3pt]{$t$};
\draw (-6,-4) -- (-6,-5.5) node[below=-3pt]{$v$};
\draw[ut] (4.5,-4) -- (4.5,-5.5) node[below=-3pt]{$u$};
\end{tikzpicture}
\ - \ 
\begin{tikzpicture}[baseline=-3pt, x=1mm, y=1.2mm]
\newsmalltheta{2}
\draw (-6,4) -- (-6,5.5) node[above=-3pt]{$v$};
\draw[ut] (-3,-4) -- (-3,-5.5) node[below=-3pt]{$t$};
\draw[ut] (4.5,-4) -- (4.5,-5.5) node[below=-3pt]{$u$};
\end{tikzpicture}
\ + \ 
\begin{tikzpicture}[baseline=-3pt, x=1mm, y=1.2mm]
\newsmalltheta{3}
\draw[ut] (-3,-4) -- (-3,-5.5) node[below=-3pt]{$t$};
\draw (-6,-4) -- (-6,-5.5) node[below=-3pt]{$v$};
\draw[ut] (4.5,-4) -- (4.5,-5.5) node[below=-3pt]{$u$};
\end{tikzpicture}
\ - \ 
\begin{tikzpicture}[baseline=-3pt, x=1mm, y=1.2mm]
\newsmalltheta{3}
\draw (-6,4) -- (-6,5.5) node[above=-3pt]{$v$};
\draw[ut] (-3,-4) -- (-3,-5.5) node[below=-3pt]{$t$};
\draw[ut] (4.5,-4) -- (4.5,-5.5) node[below=-3pt]{$u$};
\end{tikzpicture} \ .
\]
The following sample computation, in which we assume $(\deg t, \deg u) = (2,2)$, shows that $U$ lies in the image of $\beta^{n-1}$, where $n=3 + \deg t + \deg u$: 
\begin{align*}
& \begin{tikzpicture}[baseline=0pt, x=1.2mm, y=1.2mm]
\draw[red, very thick, dotted, ->-] (0,-2) -- (0,2);
\draw[red, very thick, dotted, ->-] (-23,-2) -- (-23,2);
\draw[red, very thick, dotted, ->-] (16,-2) -- (16,2);
\draw[red, very thick, dotted, ->-] (2,-4) -- (5,-4);
\draw[red, very thick, dotted, ->-] (9,-4) -- (12,-4);
\draw[red, very thick, dotted, ->-] (-2,-4) -- (-5,-4);
\draw[red, very thick, dotted, ->-] (-9,-4) -- (-12,-4);
\draw[red, very thick, dotted, ->-] (-16,-4) -- (-19,-4);
\draw (0,-2) -- (0,-4);
\draw (-2,-4) -- (2,-4); 
\draw (2,-2.25) node{\footnotesize ${\sf \sharp 1}$};
\draw (5,-4) -- (9,-4);
\draw (7,-4) -- (7,-5.5) node[below=-3pt]{$u_1$};
\draw (7,-2.25) node{\footnotesize ${\sf \sharp 2}$};
\draw (12,-4) -- (15,-4) to[bend right=30] (16,-3) -- (16,-2);
\draw (14,-4) -- (14,-5.5) node[below=-3pt]{$u_2$};
\draw (14,-2.25) node{\footnotesize ${\sf \sharp 3}$};
\draw (-5,-4) -- (-9,-4);
\draw (-7,-4) -- (-7,-5.5) node[below=-3pt]{$t_2$};
\draw (-7,-2.25) node{\footnotesize ${\sf \sharp 4}$};
\draw (-12,-4) -- (-16,-4);
\draw (-14,-4) -- (-14,-5.5) node[below=-3pt]{$t_1$};
\draw (-14,-2.25) node{\footnotesize ${\sf \sharp 5}$};
\draw (0,2) -- (0,4);
\draw (15,4) -- (-22,4);
\draw (15,4) to[bend left=30] (16,3) -- (16,2);
\draw (-22,4) to[bend right=30] (-23,3) -- (-23,2);
\draw (0,6.5) node{\footnotesize ${\sf \sharp 6}$};]
\draw (-19,-4) -- (-22,-4) to[bend left=30] (-23,-3) -- (-23,-2);
\draw (-21,-4) -- (-21,-5.5) node[below=-3pt]{$v$};
\draw (-21,-2.25) node{\footnotesize ${\sf \sharp 7}$};
\end{tikzpicture}
\ \overset{\beta^4}{\longmapsto} \ 
\begin{tikzpicture}[baseline=0pt, x=1.2mm, y=1.2mm]
\draw[red, very thick, dotted, ->-] (0,-2) -- (0,2);
\draw[red, very thick, dotted, ->-] (-23,-2) -- (-23,2);
\draw[red, very thick, dotted, ->-] (16,-2) -- (16,2);
\draw (2,-4) -- (5,-4);
\draw (9,-4) -- (12,-4);
\draw (-2,-4) -- (-5,-4);
\draw (-9,-4) -- (-12,-4);
\draw[red, very thick, dotted, ->-] (-16,-4) -- (-19,-4);
\draw (0,-2) -- (0,-4);
\draw (-2,-4) -- (2,-4); 
\draw (2,-2.25) node{\footnotesize ${\sf \sharp 1}$};
\draw (5,-4) -- (9,-4);
\draw (7,-4) -- (7,-5.5) node[below=-3pt]{$u_1$};
\draw (12,-4) -- (15,-4) to[bend right=30] (16,-3) -- (16,-2);
\draw (14,-4) -- (14,-5.5) node[below=-3pt]{$u_2$};
\draw (-5,-4) -- (-9,-4);
\draw (-7,-4) -- (-7,-5.5) node[below=-3pt]{$t_2$};
\draw (-12,-4) -- (-16,-4);
\draw (-14,-4) -- (-14,-5.5) node[below=-3pt]{$t_1$};
\draw (0,2) -- (0,4);
\draw (15,4) -- (-22,4);
\draw (15,4) to[bend left=30] (16,3) -- (16,2);
\draw (-22,4) to[bend right=30] (-23,3) -- (-23,2);
\draw (0,6.5) node{\footnotesize ${\sf \sharp 2}$};]
\draw (-19,-4) -- (-22,-4) to[bend left=30] (-23,-3) -- (-23,-2);
\draw (-21,-4) -- (-21,-5.5) node[below=-3pt]{$v$};
\draw (-21,-2.25) node{\footnotesize ${\sf \sharp 3}$};
\end{tikzpicture} \\ 
\overset{\beta}{\longmapsto} \ &
\begin{tikzpicture}[baseline=0pt, x=1.2mm, y=1.2mm]
\draw (0,8) node{\footnotesize ${\sf \sharp 1}$};
\draw (-8,0) node{\footnotesize ${\sf \sharp 2}$};
\draw (0,-3) -- (0,3);
\draw[red, very thick, dotted, ->-] (10,-3) -- (10,3);
\draw[red, very thick, dotted, ->-] (-10,-5) -- (-10,-2);
\draw[red, very thick, dotted, -<-] (-10,5) -- (-10,2);
\draw (0,3) -- (0,6);
\draw (0,-3) -- (0,-6);
\draw (-10,-5) to[bend right=30] (-9,-6) -- (9,-6) to[bend right=30] (10,-5) -- (10,-3);
\draw (-10,5) to[bend left=30] (-9,6) -- (9,6) to[bend left=30] (10,5) -- (10,3);
\draw (-10,-2) -- (-10,2);
\draw (-10,0) -- (-11.5,0) node[left=-3pt]{$v$};
\draw[ut] (-5,-6) -- (-5,-7.5) node[below=-3pt]{$t$};
\draw[ut] (5,-6) -- (5,-7.5) node[below=-3pt]{$u$};
\end{tikzpicture}
\ +  
\begin{tikzpicture}[baseline=0pt, x=1.2mm, y=1.2mm]
\draw (0,8) node{\footnotesize ${\sf \sharp 1}$};
\draw (-8,0) node{\footnotesize ${\sf \sharp 2}$};
\draw[red, very thick, dotted, ->-] (0,-3) -- (0,3);
\draw (10,-3) -- (10,3);
\draw[red, very thick, dotted, ->-] (-10,-5) -- (-10,-2);
\draw[red, very thick, dotted, -<-] (-10,5) -- (-10,2);
\draw (0,3) -- (0,6);
\draw (0,-3) -- (0,-6);
\draw (-10,-5) to[bend right=30] (-9,-6) -- (9,-6) to[bend right=30] (10,-5) -- (10,-3);
\draw (-10,5) to[bend left=30] (-9,6) -- (9,6) to[bend left=30] (10,5) -- (10,3);
\draw (-10,-2) -- (-10,2);
\draw (-10,0) -- (-11.5,0) node[left=-3pt]{$v$};
\draw[ut] (-5,-6) -- (-5,-7.5) node[below=-3pt]{$t$};
\draw[ut] (5,-6) -- (5,-7.5) node[below=-3pt]{$u$};
\end{tikzpicture}
\ \overset{\beta}{\longmapsto} U. 
\end{align*}
In the same manner as in the proof of Lemma~\ref{lem:elementsR}(1), we have
\[
\Theta_R(t,u,v,1) = \Theta_R(vt,u,1,1) - U.
\]
The first term on the right hand side is  in the image of the map \eqref{eq:cokbeta} from the case $\deg v + \deg w = 0$.
This settles the case $\deg v + \deg w = 1$. 

{\em Step 2-$(iii)$}.
Finally, we consider the case $\deg v + \deg w \ge 2$.
We need a lemma.

\begin{lemma} \label{lem:induction1and2}
The element in Lemma~{\rm\ref{lem:elementsR}(3)} belongs to the image of the map~\eqref{eq:cokbeta} when $a,b \in H$.
\end{lemma}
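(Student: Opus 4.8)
The plan is to prove the lemma by exhibiting, for suitable $n$, one explicit hairy Lie graph $X \in C_{n,n+1}\H^{\ord}(n)$ whose image under $\beta^{n-1}$ is precisely the element of Lemma~\ref{lem:elementsR}(3). Since the image of the map \eqref{eq:cokbeta} is spanned by elements of the form $\beta^{n-1}(X)$, producing a single such preimage is enough. By the proof of Lemma~\ref{lem:elementsR}(3), after sliding the hair $x$ off the central edge by the IHX relation we may assume $x=1$, so the target element equals
\[
\Theta_R(t,u,va,bw) - \Theta_R(t,ub,va,w) - \Theta_R(at,u,v,bw) + \Theta_R(at,ub,v,w),
\]
and it is this four-term combination that I would realize as $\beta^{n-1}(X)$.

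The construction reverses the computation already carried out in Step~1, case~(II-iii): the four-term blocks produced there were identified, via Lemma~\ref{lem:elementsR}(3), as lying in $R$, and reading that identification backward realizes our combination as $\beta^{n-1}(X)$. Concretely, I would take $X$ to be the tripod realization of the theta graph carrying the hairs $t,u,v,w$, together with two distinguished tripods $Y_i$ and $Y_{i'}$ (with $i<i'$, neither carrying a self-loop edge) each equipped with two backward edges, and I would place the single letters $a$ and $b$ as the $H$-colorings on the univalent vertices of the left and right special tripods adjacent to these backward edges. This is exactly where the hypothesis $a,b \in H$ enters: a single element of $H$ occupies one univalent vertex of a tripod and can be threaded directly through the resolution, whereas a tensor of higher degree would not fit the ordered-tripod structure without extra modification (and indeed only $a,b\in H$ is needed in the induction of Step~2-(iii), where one strips a single letter off the third and fourth slots).

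I would then compute $\beta^{n-1}(X)$ step by step, following the same IHX and AS manipulations as in Step~1. Each of the two special tripods contributes a binary choice of which of its two backward edges is converted to a solid edge, giving $2\times 2=4$ terms; sliding the colorings $a$ and $b$ onto the appropriate edges of the emerging theta graph places $a$ with either $v$ (top) or $t$ (bottom) and $b$ with either $w$ or $u$, the AS relation supplying the signs. The resulting pattern $(+_{a\uparrow}-_{a\downarrow})\otimes(+_{b\uparrow}-_{b\downarrow})$ reproduces the signs $+,-,-,+$ of the four displayed terms. As in Step~2-(i) and Step~2-(ii), I would display one representative diagram with $t,u,v,w$ of small fixed degree and note that inserting further hairs changes nothing in the combinatorics; multilinearity (ML) then yields the statement for all pure tensors.

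The main obstacle will be the bookkeeping rather than any conceptual difficulty: I must verify that the two resolutions land on the intended univalent vertices so that no spurious terms survive, and track carefully the signs arising from the AS relation together with the direction conventions on the dotted edges, so that the four terms assemble with coefficients exactly $+,-,-,+$ and not some other pattern. Once the sample $\beta^{n-1}$-computation is checked against the displayed combination, the general case and the passage to arbitrary $t,u,v,w\in\mathcal{T}$ are routine.
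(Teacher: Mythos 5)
Your proposal is correct and essentially identical to the paper's proof: the paper likewise reduces to $x=1$ and exhibits an explicit ordered-tripod preimage (a sample computation with $\deg t,\deg u,\deg v,\deg w = 2,3,3,2$, where two extra tripods each carry a single-letter coloring $a$ or $b$ on one univalent vertex and two backward dotted edges on the others), computes $\beta^{n-1}$ so that the final two applications of $\beta$ produce the $2\times 2 = 4$ terms with sign pattern $+,-,-,+$, and then notes that the generalization to arbitrary $t,u,v,w \in \mathcal{T}$ is easy. One phrasing slip only: the element to be realized as $\beta^{n-1}(X)$ is the four-term theta-graph combination displayed in Lemma~\ref{lem:elementsR}(3) itself rather than its $\Theta_R$-expansion, but the construction you describe produces exactly those four terms, so the argument is unaffected.
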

\begin{proof}
We may assume that $x=1$. 
The following sample computation proves the assertion for the case where the degrees of $t,u,v,w$ are $2,3,3,2$, respectively. 
\begin{align*}
& \begin{tikzpicture}[baseline=0pt, x=1.2mm, y=1.2mm]
\draw[red, very thick, dotted, ->-] (0,5) -- (0,-5);
\draw[red, very thick, dotted, ->-] (-23,5) -- (-23,2);
\draw[red, very thick, dotted, ->-] (-23,-5) -- (-23,-2);
\draw[red, very thick, dotted, ->-] (23,5) -- (23,2);
\draw[red, very thick, dotted, ->-] (23,-5) -- (23,-2);
\draw[red, very thick, dotted, ->-] (-2,7) -- (-5,7);
\draw[red, very thick, dotted, ->-] (-9,7) -- (-12,7);
\draw[red, very thick, dotted, ->-] (-16,7) -- (-19,7);
\draw[red, very thick, dotted, ->-] (-2,-7) -- (-8,-7);
\draw[red, very thick, dotted, ->-] (-13,-7) -- (-19,-7);
\draw[red, very thick, dotted, ->-] (2,-7) -- (5,-7);
\draw[red, very thick, dotted, ->-] (9,-7) -- (12,-7);
\draw[red, very thick, dotted, ->-] (16,-7) -- (19,-7);
\draw[red, very thick, dotted, ->-] (2,7) -- (8,7);
\draw[red, very thick, dotted, ->-] (13,7) -- (19,7);
\draw (0,5) -- (0,7);
\draw (-2,7) -- (2,7);
\draw (2,5.25) node{\footnotesize ${\sf \sharp 1}$};
\draw (-5,7) -- (-9,7);
\draw (-7,7) -- (-7,8.5) node[above=-3pt]{$v_1$};
\draw (-7,5.25) node{\footnotesize ${\sf \sharp 2}$};
\draw (-12,7) -- (-16,7);
\draw (-14,7) -- (-14,8.5) node[above=-3pt]{$v_2$};
\draw (-14,5.25) node{\footnotesize ${\sf \sharp 3}$};
\draw (-19,7) -- (-22,7) to[bend right=30] (-23,6) -- (-23,5);
\draw (-21,7) -- (-21,8.5) node[above=-3pt]{$v_3$};
\draw (-21,5.25) node{\footnotesize ${\sf \sharp 4}$};
\draw (0,-5) -- (0,-7);
\draw (-2,-7) -- (2,-7);
\draw (2,-5.25) node{\footnotesize ${\sf \sharp 5}$};
\draw (-8,-7) -- (-13,-7);
\draw (-10.5,-7) -- (-10.5,-8.5) node[below=-3pt]{$t_2$};
\draw (-10.5,-5.25) node{\footnotesize ${\sf \sharp 6}$};
\draw (-19,-7) -- (-22,-7) to[bend left=30] (-23,-6) -- (-23,-5);
\draw (-21,-7) -- (-21,-8.5) node[below=-3pt]{$t_1$};
\draw (-21,-5.25) node{\footnotesize ${\sf \sharp 7}$};
\draw (8,7) -- (13,7);
\draw (10.5,7) -- (10.5,8.5) node[above=-3pt]{$w_2$};
\draw (10.5,5.25) node{\footnotesize ${\sf \sharp 8}$};
\draw (19,7) -- (22,7) to[bend left=30] (23,6) -- (23,5);
\draw (21,7) -- (21,8.5) node[above=-3pt]{$w_1$};
\draw (21,5.25) node{\footnotesize ${\sf \sharp 9}$};
\draw (5,-7) -- (9,-7);
\draw (7,-7) -- (7,-8.5) node[below=-3pt]{$u_1$};
\draw (7,-5.25) node{\footnotesize ${\sf \sharp 10}$};
\draw (12,-7) -- (16,-7);
\draw (14,-7) -- (14,-8.5) node[below=-3pt]{$u_2$};
\draw (14,-5.25) node{\footnotesize ${\sf \sharp 11}$};
\draw (19,-7) -- (22,-7) to[bend right=30] (23,-6) -- (23,-5);
\draw (21,-7) -- (21,-8.5) node[below=-3pt]{$u_3$};
\draw (20.5,-5.25) node{\footnotesize ${\sf \sharp 12}$};
\draw (-23,-2) -- (-23,2);
\draw (-23,0) -- (-24.5,0) node[left=-3pt]{$a$};
\draw (-20.5,0) node{\footnotesize ${\sf \sharp 13}$};
\draw (23,-2) -- (23,2);
\draw (23,0) -- (24.5,0) node[right=-3pt]{$b$};
\draw (20.5,0) node{\footnotesize ${\sf \sharp 14}$};
\end{tikzpicture}
\ \overset{\beta^{11}}{\longmapsto} \ 
\begin{tikzpicture}[baseline=0pt, x=1.2mm, y=1.2mm]
\draw (0,8) node{\footnotesize ${\sf \sharp 1}$};
\draw (-8,0) node{\footnotesize ${\sf \sharp 2}$};
\draw (8,0) node{\footnotesize ${\sf \sharp 3}$};
\draw (0,-3) -- (0,3);
\draw[red, very thick, dotted, ->-] (-10,-5) -- (-10,-2);
\draw[red, very thick, dotted, -<-] (-10,5) -- (-10,2);
\draw[red, very thick, dotted, ->-] (10,-5) -- (10,-2);
\draw[red, very thick, dotted, -<-] (10,5) -- (10,2);
\draw (0,3) -- (0,6);
\draw (0,-3) -- (0,-6);
\draw (-10,-5) to[bend right=30] (-9,-6) -- (9,-6) to[bend right=30] (10,-5);
\draw (-10,5) to[bend left=30] (-9,6) -- (9,6) to[bend left=30] (10,5);
\draw (-10,-2) -- (-10,2);
\draw (-10,0) -- (-11.5,0) node[left=-3pt]{$a$};
\draw (10,-2) -- (10,2);
\draw (10,0) -- (11.5,0) node[right=-3pt]{$b$};
\draw[ut] (-5,-6) -- (-5,-7.5) node[below=-3pt]{$t$};
\draw[ut] (5,-6) -- (5,-7.5) node[below=-3pt]{$u$};
\draw[ut] (-5,6) -- (-5,7.5) node[above=-3pt]{$v$};
\draw[ut] (5,6) -- (5,7.5) node[above=-3pt]{$w$};
\end{tikzpicture} \\
\ \overset{\beta^{2}}{\longmapsto} \ &
\begin{tikzpicture}[baseline=-3pt, x=1mm, y=1.2mm]
\newsmalltheta{2}
\draw[ut] (-3,-4) -- (-3,-5.5) node[below=-3pt]{$t$};
\draw (-6,-4) -- (-6,-5.5) node[below=-3pt]{$a$};
\draw[ut] (3,-4) -- (3,-5.5) node[below=-3pt]{$u$};
\draw (6,-4) -- (6,-5.5) node[below=-3pt]{$b$};
\draw[ut] (-3,4) -- (-3,5.5) node[above=-3pt]{$v$};
\draw[ut] (3,4) -- (3,5.5) node[above=-3pt]{$w$};
\end{tikzpicture}
\ - \ 
\begin{tikzpicture}[baseline=-3pt, x=1mm, y=1.2mm]
\newsmalltheta{2}
\draw[ut] (-3,-4) -- (-3,-5.5) node[below=-3pt]{$t$};
\draw[ut] (3,-4) -- (3,-5.5) node[below=-3pt]{$u$};
\draw (6,-4) -- (6,-5.5) node[below=-3pt]{$b$};
\draw[ut] (-3,4) -- (-3,5.5) node[above=-3pt]{$v$};
\draw (-6,4) -- (-6,5.5) node[above=-3pt]{$a$};
\draw[ut] (3,4) -- (3,5.5) node[above=-3pt]{$w$};
\end{tikzpicture}
\ - \ 
\begin{tikzpicture}[baseline=-3pt, x=1mm, y=1.2mm]
\newsmalltheta{2}
\draw[ut] (-3,-4) -- (-3,-5.5) node[below=-3pt]{$t$};
\draw (-6,-4) -- (-6,-5.5) node[below=-3pt]{$a$};
\draw[ut] (3,-4) -- (3,-5.5) node[below=-3pt]{$u$};
\draw[ut] (-3,4) -- (-3,5.5) node[above=-3pt]{$v$};
\draw[ut] (3,4) -- (3,5.5) node[above=-3pt]{$w$};
\draw (6,4) -- (6,5.5) node[above=-3pt]{$b$};
\end{tikzpicture}
\ + \ 
\begin{tikzpicture}[baseline=-3pt, x=1mm, y=1.2mm]
\newsmalltheta{2}
\draw[ut] (-3,-4) -- (-3,-5.5) node[below=-3pt]{$t$};
\draw[ut] (3,-4) -- (3,-5.5) node[below=-3pt]{$u$};
\draw[ut] (-3,4) -- (-3,5.5) node[above=-3pt]{$v$};
\draw (-6,4) -- (-6,5.5) node[above=-3pt]{$a$};
\draw[ut] (3,4) -- (3,5.5) node[above=-3pt]{$w$};
\draw (6,4) -- (6,5.5) node[above=-3pt]{$b$};
\end{tikzpicture} \ .
\end{align*}
It is easy to generalize this argument for any $t,u,v,w\in \mathcal{T}$.
\end{proof}

Returning to the case $\deg v + \deg w \ge 2$, we first assume that both $\deg v$ and $\deg w$ are positive. 
Then one can write $v = v'a$ and
$w = bw'$ 
for some pure tensors $v',w'$ and $a,b \in H$.
We have 
\begin{align*}
& \Theta_R(t,u,v,w) - \Theta_R(at,u,v',w)
- \Theta_R(t,ub,v,w') + \Theta_R(at,ub,v',w') \\
= & \  
\begin{tikzpicture}[baseline=-3pt, x=1mm, y=1.2mm]
\newsmalltheta{2}
\draw[ut] (-3,4) -- (-3,5.5) node[above=-3pt]{$v'$};
\draw (-6,4) -- (-6,5.5) node[above=-3pt]{$a$};
\draw[ut] (3,4) -- (3,5.5) node[above=-3pt]{$w'$};
\draw (6,4) -- (6,5.5) node[above=-3pt]{$b$}; 
\draw[ut] (-3,-4) -- (-3,-5.5) node[below=-3pt]{$t$};
\draw[ut] (3,-4) -- (3,-5.5) node[below=-3pt]{$u$};
\end{tikzpicture}
\ - \ 
\begin{tikzpicture}[baseline=-3pt, x=1mm, y=1.2mm]
\newsmalltheta{2}
\draw[ut] (-3,4) -- (-3,5.5) node[above=-3pt]{$v'$};
\draw[ut] (3,4) -- (3,5.5) node[above=-3pt]{$w'$};
\draw (6,4) -- (6,5.5) node[above=-3pt]{$b$}; 
\draw[ut] (-3,-4) -- (-3,-5.5) node[below=-3pt]{$t$};
\draw (-6,-4) -- (-6,-5.5) node[below=-3pt]{$a$};
\draw[ut] (3,-4) -- (3,-5.5) node[below=-3pt]{$u$};
\end{tikzpicture}
\ - \ 
\begin{tikzpicture}[baseline=-3pt, x=1mm, y=1.2mm]
\newsmalltheta{2}
\draw[ut] (-3,4) -- (-3,5.5) node[above=-3pt]{$v'$};
\draw (-6,4) -- (-6,5.5) node[above=-3pt]{$a$};
\draw[ut] (3,4) -- (3,5.5) node[above=-3pt]{$w'$};
\draw[ut] (-3,-4) -- (-3,-5.5) node[below=-3pt]{$t$};
\draw[ut] (3,-4) -- (3,-5.5) node[below=-3pt]{$u$};
\draw (6,-4) -- (6,-5.5) node[below=-3pt]{$b$}; 
\end{tikzpicture}
\ + \ 
\begin{tikzpicture}[baseline=-3pt, x=1mm, y=1.2mm]
\newsmalltheta{2}
\draw[ut] (-3,4) -- (-3,5.5) node[above=-3pt]{$v'$};
\draw[ut] (3,4) -- (3,5.5) node[above=-3pt]{$w'$};
\draw[ut] (-3,-4) -- (-3,-5.5) node[below=-3pt]{$t$};
\draw (-6,-4) -- (-6,-5.5) node[below=-3pt]{$a$};
\draw[ut] (3,-4) -- (3,-5.5) node[below=-3pt]{$u$};
\draw (6,-4) -- (6,-5.5) node[below=-3pt]{$b$}; 
\end{tikzpicture} \ .
\end{align*}
By Lemma~\ref{lem:induction1and2}, the right hand side is in the image of the map \eqref{eq:cokbeta}, and by the inductive assumption the second to fourth terms in the left hand side are also in its image.
Therefore, the same holds for the first term in the left hand side, which completes the case of $\deg v, \deg w >0$. 

Next, consider the case where either $\deg v$ or $\deg w$ is zero.
By symmetry, we may assume that $\deg w = 0$.
One can write $v= av'$ for some pure tensor $v'$ and $a \in H$.
Using the IHX and AS relations, we compute
\[
\Theta_R(t,u,v,1) = \Theta_R(t,u,v',a)
+ \,
\begin{tikzpicture}[baseline=-3pt, x=1mm, y=1.2mm]
\newsmalltheta{1}
\draw[ut] (-6,4) -- (-6,5.5) node[above=-3pt]{$v'$};
\draw (3,4) -- (3,5.5) node[above=-3pt]{$a$};
\draw[ut] (-4.5,-4) -- (-4.5,-5.5) node[below=-3pt]{$t$};
\draw[ut] (0,-3) -- (1.5,-3) node[right=-3pt]{$u$};
\end{tikzpicture}
\, + \, 
\begin{tikzpicture}[baseline=-3pt, x=1mm, y=1.2mm]
\newsmalltheta{2}
\draw[ut] (-6,4) -- (-6,5.5) node[above=-3pt]{$v'$};
\draw (3,4) -- (3,5.5) node[above=-3pt]{$a$};
\draw[ut] (-4.5,-4) -- (-4.5,-5.5) node[below=-3pt]{$t$};
\draw[ut] (0,-3) -- (1.5,-3) node[right=-3pt]{$u$};
\end{tikzpicture}
\, + \, 
\begin{tikzpicture}[baseline=-3pt, x=1mm, y=1.2mm]
\newsmalltheta{3}
\draw[ut] (-6,4) -- (-6,5.5) node[above=-3pt]{$v'$};
\draw (3,4) -- (3,5.5) node[above=-3pt]{$a$};
\draw[ut] (-4.5,-4) -- (-4.5,-5.5) node[below=-3pt]{$t$};
\draw[ut] (0,-3) -- (1.5,-3) node[right=-3pt]{$u$};
\end{tikzpicture} \ .
\]
Applying the first case to the first term and the rest on the right hand side, we conclude that the left hand side is in the image of the map \eqref{eq:cokbeta}.

This completes the proof of Theorem~\ref{thm:omega2}.
\qed

\subsection{Proof of Theorem~\ref{thm:omega2_presentation}} \label{subsec:pf_Thm1.1}

Combining Theorem~\ref{thm:omega2} and the presentation of $C_{1,2}\H$ given in \eqref{eq:C12H}, we prove Theorem~\ref{thm:omega2_presentation}.  
For pure tensors $t,u,v,w \in \mathcal{T}$, let 
\[
\Theta(t,u,v,w) := \, 
\begin{tikzpicture}[baseline=-3pt, x=1.2mm, y=1.4mm]
\newsmalltheta{2}
\draw[ut] (-4,-4) -- (-4,-5.5) node[below=-3pt]{$t$}; 
\draw[ut] (4,-4) -- (4,-5.5) node[below=-3pt]{$u$}; 
\draw[ut] (-4,4) -- (-4,5.5) node[above=-3pt]{$v$}; 
\draw[ut] (4,4) -- (4,5.5) node[above=-3pt]{$w$}; 
\end{tikzpicture}
\, = \,  
\begin{tikzpicture}[baseline=-3pt, x=1.5mm, y=1.8mm]
\newsmalltheta{2}
\draw (-7,-4) -- (-7,-5.5) node[below=-3pt]{$t_1$}; 
\draw (-4.5,-5) node{$\dots$};
\draw (-2,-4) -- (-2,-5.5) node[below=-3pt]{$t_i$}; 
\draw (2,-4) -- (2,-5.5) node[below=-3pt]{$u_1$};
\draw (4.5,-5) node{$\dots$};
\draw (7,-4) -- (7,-5.5) node[below=-3pt]{$u_j$};
\draw (-2,4) -- (-2,5.5) node[above=-3pt]{$v_1$}; 
\draw (-4.5,5) node{$\dots$};
\draw (-7,4) -- (-7,5.5) node[above=-3pt]{$v_j$}; 
\draw (7,4) -- (7,5.5) node[above=-3pt]{$w_1$};
\draw (4.5,5) node{$\dots$};
\draw (2,4) -- (2,5.5) node[above=-3pt]{$w_l$}; 
\end{tikzpicture} \ ,
\]
\[
\mathcal{D}(t,u,v,w) := \, 
\begin{tikzpicture}[baseline=-3pt, x=1.4mm, y=1.4mm]
\newdumbbell 
\draw[ut] (-6.5,-4) -- (-6.5,-5.5) node[below=-3pt]{$t$};
\draw[ut] (6.5,-4) -- (6.5,-5.5) node[below=-3pt]{$u$};
\draw[ut] (-6.5,4) -- (-6.5,5.5) node[above=-3pt]{$v$};
\draw[ut] (6.5,4) -- (6.5,5.5) node[above=-3pt]{$w$};
\end{tikzpicture}
\, = \, 
\begin{tikzpicture}[baseline=-3pt, x=1.8mm, y=1.8mm]
\newdumbbell 
\draw (-8.5,-4) -- (-8.5,-5.5) node[below=-3pt]{$t_1$};
\draw (-6.5,-5) node{$\dots$};
\draw (-4.5,-4) -- (-4.5,-5.5) node[below=-3pt]{$t_i$};
\draw (4.5,-4) -- (4.5,-5.5) node[below=-3pt]{$u_1$};
\draw (6.5,-5) node{$\dots$};
\draw (8.5,-4) -- (8.5,-5.5) node[below=-3pt]{$u_j$};
\draw (-4.5,4) -- (-4.5,5.5) node[above=-3pt]{$v_1$};
\draw (-6.5,5) node{$\dots$}; 
\draw (-8.5,4) -- (-8.5,5.5) node[above=-3pt]{$v_k$};
\draw (8.5,4) -- (8.5,5.5) node[above=-3pt]{$w_1$}; 
\draw (6.5,5) node{$\dots$}; 
\draw (4.5,4) -- (4.5,5.5) node[above=-3pt]{$w_l$};
\end{tikzpicture} \ .
\]
Any generator of $C_{1,2}\H$ in \eqref{eq:C12generators} can be written as a linear combination of these elements,
since we can move the hairs colored with $x$ to other horizontal edges.
Thus, the space $C_{1,2}\H$ is generated by these elements, and so is $\widetilde{\Omega}_2$.
By \eqref{eq:C12H} and Theorem~\ref{thm:omega2}, the following five types of relations give a complete set of relations in $\widetilde{\Omega}_{2}$.
\begin{description}
\item[\rm (ML)]
The multi-linearity relation on $H$-colorings.
\item[\rm (CS)]
This comes from the core symmetry relation for $C_{1,2}\H$.
For the theta graph, the reflections with respect to the horizontal and vertical axes give 
\[
\Theta(t,u,v,w) = \Theta(\overline{v},\overline{w},\overline{t},\overline{u}), 
\qquad 
\Theta(t,u,v,w) = \Theta(\overline{u}, \overline{t}, \overline{w}, \overline{v}).
\]
For the dumbbell graph, turning the left cycle upside down gives
\[
\mathcal{D}(t,u,v,w) = \mathcal{D}(\overline{v},u,\overline{t},w),
\]
and the reflection with respect to the vertical axis gives
\[
\mathcal{D}(t,u,v,w) = \mathcal{D}(\overline{u}, \overline{t}, \overline{w}, \overline{v}).
\]

\item[\rm (HB)]
The handle balance relation. 
In $C_{1,2}\H$ this corresponds to the core IHX relations, and comes from the ambiguity of rewriting generators in \eqref{eq:C12generators} in terms of $\Theta(t,u,v,w)$ and $\mathcal{D}(t,u,v,w)$. 
In more detail, consider a generator of dumbbell type in \eqref{eq:C12generators}.
The extra hairs colored with $x$ can be moved to either left or right cycle.
The two choices give rise to a relation among generators of dumbbell type.
We are reduced to the case where there is only one extra hair, and in this case we have 
\begin{align*}
\begin{tikzpicture}[baseline=-3pt, x=1.2mm, y=1.2mm]
\newdumbbell 
\draw (0,0) -- (0,1.5) node[above=-3pt]{$a$}; 
\draw[ut] (-8,-4) -- (-8,-5.5) node[below=-3pt]{$t$};
\draw[ut] (8,-4) -- (8,-5.5) node[below=-3pt]{$u$};
\draw[ut] (-8,4) -- (-8,5.5) node[above=-3pt]{$v$};
\draw[ut] (8,4) -- (8,5.5) node[above=-3pt]{$w$};
\end{tikzpicture} 
\, = & \  
\begin{tikzpicture}[baseline=-3pt, x=1.2mm, y=1.2mm]
\newdumbbell 
\draw[ut] (-8,-4) -- (-8,-5.5) node[below=-3pt]{$t$};
\draw[ut] (8,-4) -- (8,-5.5) node[below=-3pt]{$u$};
\draw[ut] (-8,4) -- (-8,5.5) node[above=-3pt]{$v$};
\draw (-5,4) -- (-5,5.5) node[above=-3pt]{$a$};
\draw[ut] (8,4) -- (8,5.5) node[above=-3pt]{$w$};
\end{tikzpicture} 
\ - \ 
\begin{tikzpicture}[baseline=-3pt, x=1.2mm, y=1.2mm]
\newdumbbell 
\draw[ut] (-8,-4) -- (-8,-5.5) node[below=-3pt]{$t$};
\draw (-5,-4) -- (-5,-5.5) node[below=-3pt]{$a$};
\draw[ut] (8,-4) -- (8,-5.5) node[below=-3pt]{$u$};
\draw[ut] (-8,4) -- (-8,5.5) node[above=-3pt]{$v$};
\draw[ut] (8,4) -- (8,5.5) node[above=-3pt]{$w$};
\end{tikzpicture} \\
\ = & \   
\begin{tikzpicture}[baseline=-3pt, x=1.2mm, y=1.2mm]
\newdumbbell 
\draw[ut] (-8,-4) -- (-8,-5.5) node[below=-3pt]{$t$};
\draw[ut] (8,-4) -- (8,-5.5) node[below=-3pt]{$u$};
\draw[ut] (-8,4) -- (-8,5.5) node[above=-3pt]{$v$};
\draw[ut] (8,4) -- (8,5.5) node[above=-3pt]{$w$};
\draw (5,4) -- (5,5.5) node[above=-3pt]{$a$};
\end{tikzpicture} 
\ - \ 
\begin{tikzpicture}[baseline=-3pt, x=1.2mm, y=1.2mm]
\newdumbbell 
\draw[ut] (-8,-4) -- (-8,-5.5) node[below=-3pt]{$t$};
\draw[ut] (8,-4) -- (8,-5.5) node[below=-3pt]{$u$};
\draw (5,-4) -- (5,-5.5) node[below=-3pt]{$a$};
\draw[ut] (-8,4) -- (-8,5.5) node[above=-3pt]{$v$};
\draw[ut] (8,4) -- (8,5.5) node[above=-3pt]{$w$};
\end{tikzpicture} \, .
\end{align*}
Therefore, we obtain
\[
\mathcal{D}(t,u,av,w) - \mathcal{D}(ta,u,v,w) 
= \mathcal{D}(t,u,v,wa) - \mathcal{D}(t,au,v,w).
\]
A similar argument for the theta graph yields
\[
\Theta(t,u,v,wa) - \Theta(t,u,av,w) 
= \Theta(t,au,v,w) - \Theta(ta,u,v,w).
\]

\item[\rm (CC)]
The core change relation. 
This has already be explained in Section~\ref{subsec:C12H}.
We have
$
\mathcal{D}(t,u,v,w) = \Theta(t,u,v,w) + \Theta(\overline{v},u,\overline{t},w)
$.
\item[\rm (GC3)]
We have $\Theta_R(t,u,v,w) = 0$, namely
\[
\begin{tikzpicture}[baseline=-3pt, x=1.2mm, y=1.44mm]
\newsmalltheta{1}
\draw[ut] (-4.5,4) -- (-4.4,5.5) node[above=-3pt]{$v$};
\draw[ut] (4.5,4) -- (4.5,5.5) node[above=-3pt]{$w$};
\draw[ut] (-4.5,-4) -- (-4.5,-5.5) node[below=-3pt]{$t$};
\draw[ut] (4.5,-4) -- (4.5,-5.5) node[below=-3pt]{$u$};
\end{tikzpicture}
\, + \, 
\begin{tikzpicture}[baseline=-3pt, x=1.2mm, y=1.44mm]
\newsmalltheta{2}
\draw[ut] (-4.5,4) -- (-4.5,5.5) node[above=-3pt]{$v$};
\draw[ut] (4.5,4) -- (4.5,5.5) node[above=-3pt]{$w$};
\draw[ut] (-4.5,-4) -- (-4.5,-5.5) node[below=-3pt]{$t$};
\draw[ut] (4.5,-4) -- (4.5,-5.5) node[below=-3pt]{$u$};
\end{tikzpicture}
\, + \, 
\begin{tikzpicture}[baseline=-3pt, x=1.2mm, y=1.44mm]
\newsmalltheta{3}
\draw[ut] (-4.5,4) -- (-4.5,5.5) node[above=-3pt]{$v$};
\draw[ut] (4.5,4) -- (4.5,5.5) node[above=-3pt]{$w$};
\draw[ut] (-4.5,-4) -- (-4.5,-5.5) node[below=-3pt]{$t$};
\draw[ut] (4.5,-4) -- (4.5,-5.5) node[below=-3pt]{$u$};
\end{tikzpicture}
=0.
\]
The first term is equal to 
\begin{align*}
\begin{tikzpicture}[baseline=-3pt, x=2mm, y=2mm]
\draw (0,6) -- (0,-6);  
\draw[red, very thick, dotted, -<-] (10,4) -- (10,-4);   
\draw[red, very thick, dotted, -<-] (-10,4) -- (-10,-4); 
\draw (0,6) to (-9,6) to[bend right=30] (-10,5) to (-10,4);
\draw (0,-6) to (-9,-6) to[bend left=30] (-10,-5) to (-10,-4);
\draw (0,6) to (9,6) to[bend left=30] (10,5) to (10,4);
\draw (0,-6) to (9,-6) to[bend right=30] (10,-5) to (10,-4);
\draw[ut] (6,6) -- (6,7.5) node[above=-3pt]{$w$};
\draw[ut] (6,-6) -- (6,-7.5) node[below=-3pt]{$u$};
\draw (0,5) -- (-1.5,5) node[left=-3pt]{$v_1$};
\draw (0,1) -- (-1.5,1) node[left=-3pt]{$v_k$};
\draw (-3,3.5) node{$\vdots$};
\draw (0,-1) -- (-1.5,-1) node[left=-3pt]{$t_1$};
\draw (0,-5) -- (-1.5,-5) node[left=-3pt]{$t_i$};
\draw (-3,-2.5) node{$\vdots$};
\end{tikzpicture} \, 
& = \sum_{\substack{A \sqcup B = \{ 1,\ldots,i \} \\ C \sqcup D = \{ 1,\ldots, k\}} } \, 
\begin{tikzpicture}[baseline=-3pt, x=2mm, y=2mm]
\newsmalltheta{2}
\draw[ut] (6,4) -- (6,5.5) node[above=-3pt]{$w$};
\draw[ut] (3,4) -- (3,5.5) node[above=-3pt]{$v_D$};
\draw[ut] (-3,4) -- (-3,2.5) node[below=-3pt]{$v_C$};
\draw[ut] (-3,-4) -- (-3,-2.5) node[above=-3pt]{$t_A$};
\draw[ut] (3,-4) -- (3,-5.5) node[below=-3pt]{$t_B$};
\draw[ut] (6,-4) -- (6,-5.5) node[below=-3pt]{$u$}; 
\end{tikzpicture}
 \\
&= \Theta(\overline{t'},t''u,\overline{v'},wv'').
\end{align*}
Here, we have used the IHX relation to move away the hairs attached to the handle of the core theta graph: $v$'s go above and $t$'s below.
Similarly, the third term is equal to $\Theta(tu'',\overline{u'},w''v,\overline{w'})$.
Thus we obtain
\[
\Theta(t,u,v,w)
+ \Theta(\overline{t'},t''u,\overline{v'},wv'')
+ \Theta(tu'',\overline{u'},w''v,\overline{w'}) = 0.
\]
\end{description}

By (CC) we can remove generators of the dumbbell type and write everything in terms of generators of the theta type.
We can check that the core symmetry and handle balance relations concerning the dumbbell graph follow from the corresponding relations for the theta graph.
This completes the proof of Theorem~\ref{thm:omega2_presentation}.

\section{The Johnson cokernel in degree $6$} \label{sec:deg6}
The general linear group $\GL=\GL(2g,\Q)$ naturally acts on the $H$-colorings of leaves of hairy Lie graphs,
and $C_{1,r}\H(n)$ and $\widetilde{\Omega}_{r,n+2-2r}$ are natural $\GL$-representation spaces. 

\subsection{The $2$-loop spaces in low degrees}
\label{subsec:table}
As explained in Section~\ref{subsec:tracemap}, we have $\Sp$-isomorphisms
\[
\h(n)\cong C_1 \H_{\rm top}(n)
\quad \text{ and } \quad 
\cok(n)\cong \bigoplus_{r=1}^{\lfloor n/2 \rfloor +1} \widetilde{\Omega}_{r,\braket{n+2-2r}}
\]
for $g\gg n$.
As a corollary of these isomorphisms and \cite[Table 1]{MSS15AM},
using \cite[(25.39)]{FH91},
we obtain the $\GL$-decompositions of $C_{1,r}\H(n)$ and $\widetilde{\Omega}_{r,n+2-2r}$ for small $n$.
With respect to the 2-loop part, we have the following:
\begin{proposition} \label{prop:GL_table}
When $g \gg n$,
the $\GL$-decompositions of $C_{1,2}\H(n)$ and $\widetilde{\Omega}_{2,n-2}$ up to Lie degree $7$ are as follows: 
\[
\renewcommand{\arraystretch}{1.5}
\begin{array}{|c|c|c|}
\hline
n&C_{1,2}\H(n)&\widetilde{\Omega}_{2,n-2}\\\hline
3& \emptyset & \emptyset \\
4& 3[2] & [2]\\
5& [3]\, 3[21]\, 2[1^3] & 2[21]\, 2[1^3]\\
6& 6[4]\, 9[31]\, 12[2^2]\, 9[21^2]\, 6[1^4] & 2[4]\, 3[31]\, 3[2^2]\, 3[21^2]\, 2[1^4] \\
7& 
\renewcommand{\arraystretch}{1.0}
\begin{array}{c}
3[5]\, 15[41]\, 18[32]\, 25[31^2] \\
18[2^21]\, 16[21^3]\, 3[1^5]
\end{array}
\renewcommand{\arraystretch}{1.5}
& 6[41]\, 6[32]\, 11[31^2]\, 6[2^21]\, 5[21^3] \\ \hline
\end{array}
\renewcommand{\arraystretch}{1.0}
\]
Here, we simply denote $[2] = [2]_\GL$, etc.
\end{proposition}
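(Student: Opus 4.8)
The plan is to reduce both columns of the table to the already known $\Sp$-decompositions of $\h(n)$ and $\cok(n)$, exploiting the fact that the $\GL$-module structure of a space of $H$-colored graphs is rigidly tied to its top-level $\Sp$-structure. First I would record the elementary but crucial observation that, since the $H$-colorings of the relevant graphs all lie in $H^{\otimes(n-2)}$, both $C_{1,2}\H(n) = \T_{2,n-2}$ and $\widetilde{\Omega}_{2,n-2}$ are $\GL$-modules all of whose irreducible constituents are $\lambda_\GL$ with $|\lambda| = n-2$ (Schur--Weyl duality for $\GL$ acting on the colorings). Thus each space is determined, as a $\GL$-module, by a tuple of multiplicities $(m_\lambda)_{|\lambda|=n-2}$, and it suffices to pin these down.

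The key step is to extract the multiplicities from $\Sp$-data. For any $\GL$-module $M$ whose constituents all have size $m$, write $M \cong \bigoplus_{|\lambda|=m} m_\lambda\, \lambda_\GL$. Restricting to $\Sp$ and applying the branching rule \cite[(25.39)]{FH91}, each $\lambda_\GL$ decomposes as $\lambda_\Sp$ together with $\Sp$-irreducibles of strictly smaller size. Consequently the size-$m$ isotypic part of $M$, regarded as an $\Sp$-module, is exactly $\bigoplus_{|\lambda|=m} m_\lambda\, \lambda_\Sp$. In other words, the $\GL$-decomposition of $M$ is obtained from its top-level $\Sp$-decomposition by the purely formal substitution $\lambda_\Sp \mapsto \lambda_\GL$, with identical multiplicities. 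This is what reduces the whole proposition to $\Sp$-computations.

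It then remains to identify the two top-level $\Sp$-decompositions. For the right column, the size-$(n-2)$ isotypic part of $\widetilde{\Omega}_{2,n-2}$ is $\widetilde{\Omega}_{2,\langle n-2\rangle}$, which by Theorem~\ref{thm:conant_detail} (through the isomorphism \eqref{eq:r-loop_part}) is the size-$(n-2)$ summand $\cok_{n,\{n-2\}}$ of $\cok(n)$; this I would read off from \cite[Theorem~1.7]{MSS15AM}. For the left column, the size-$(n-2)$ isotypic part of $C_{1,2}\H(n) = \T_{2,n-2}$ is $\T_{2,\langle n-2\rangle}$, which under the isomorphism $\h(n) \cong C_1\H_{\rm top}(n)$ of Theorem~\ref{thm:conant_detail} is the size-$(n-2)$ summand of $\h(n) \cong \m(n) \oplus \cok(n)$. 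Here I would use that the loop order $r$ is recovered bijectively from the Young-diagram size $n+2-2r$, so that the size-$(n-2)$ part is unambiguously the two-loop contribution; I would then add the size-$(n-2)$ parts of $\m(n) = \Im\tau_g \oplus \mathcal{L}_g$ (using \cite[Table~1]{MSS15AM}, together with \cite[Theorem~4.3(ii)]{MSS15AM} for $\mathcal{L}_g$) and of $\cok(n)$.

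The main obstacle is the finite but delicate bookkeeping in this last step: isolating precisely the size-$(n-2)$ isotypic components in the $\Sp$-decompositions supplied by \cite{MSS15AM}, and, for the $C_{1,2}\H(n)$ column, correctly assembling the $\m$- and $\cok$-contributions at that size for each $n = 3,\dots,7$. The conceptual crux that legitimizes this tabulation is the branching argument of the second paragraph, which guarantees that no size-$(n-2)$ $\Sp$-irreducible is concealed inside a $\GL$-constituent of larger size; once that is secured, verifying the displayed entries is a direct computation.
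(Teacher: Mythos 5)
Your proposal is correct and follows essentially the same route as the paper, which likewise combines the stable $\Sp$-isomorphisms $\h(n)\cong C_1\H_{\rm top}(n)$ and $\cok(n)\cong\bigoplus_r\widetilde{\Omega}_{r,\langle n+2-2r\rangle}$ with the $\Sp$-decompositions from \cite[Table~1]{MSS15AM} and the branching rule \cite[(25.39)]{FH91}. Your explicit observation that every $\GL$-constituent of these spaces has size exactly $n-2$, so the $\GL$-multiplicities coincide with the top-level $\Sp$-multiplicities, is precisely the (left implicit) mechanism behind the paper's one-line deduction.
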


\subsection{$[1^4]_\GL$-components in $C_{1,2}\H(6)$} \label{subsec:1^4}

According to Proposition~\ref{prop:GL_table}, there are stably $6$ copies of $[1^4]_{\GL}$ in $C_{1,2}\H(6)$.
We give a direct proof of this. 

\begin{proposition} \label{prop:dim6}
When $g\ge2$,
the multiplicity of the irreducible representation $[1^4]_{\GL}=\Lambda^4H$ in $C_{1,2}\H(6)$ is $6$.
\end{proposition}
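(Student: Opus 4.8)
The plan is to read off the multiplicity from the presentation of $C_{1,2}\H$ and then push the representation theory down to the symmetric group $S_4$. Combining \eqref{eq:C12H} with the reduction carried out in the proof of Theorem~\ref{thm:omega2_presentation}, in which the dumbbell generators are eliminated using (CC), the space $C_{1,2}\H$ is generated by the theta elements $\Theta(t,u,v,w)$ subject only to multilinearity (ML), the core symmetry (CS), and the handle balance (HB) relations — i.e. the presentation of Theorem~\ref{thm:omega2_presentation} with relation $(iv)$ (GC3) omitted. In Lie degree $6$ the relevant generators have total leaf-degree $\deg t+\deg u+\deg v+\deg w=4$, so $C_{1,2}\H(6)$ is the value $F(H)$ of a degree-$4$ polynomial functor $F$ of the $\GL$-module $H$ (the defining relations are natural in $H$ and involve no symplectic contraction). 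Since $g\ge 2$ forces $\Lambda^4H=[1^4]_\GL\neq 0$, Schur--Weyl duality identifies the multiplicity of $\Lambda^4H$ in $C_{1,2}\H(6)$ with the multiplicity of the sign representation of $S_4$ in the multilinear part (colour the four leaves by four distinct basis vectors and let $S_4$ permute them); this is exactly where the hypothesis $g\ge2$ enters.

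Next I would enumerate. A multilinear theta generator is a placement of the four colours into the ordered leaf-positions prescribed by a composition $(i,j,k,l)$ of $4$ into the four slots $t,u,v,w$, and each of the $35$ such compositions supports a unique antisymmetric colouring $A_{(i,j,k,l)}$. The two (CS) relations generate a Klein four-group $V_4$ acting on compositions by $(i,j,k,l)\mapsto(k,l,i,j),\ (j,i,l,k),\ (l,k,j,i)$, and antisymmetrizing (CS) identifies the $A$'s within each $V_4$-orbit. The orbits are one of type $4000$, three of type $3100$, three of type $2200$, three of type $2110$, and one of type $1111$, giving $11$ classes. For the four orbits with nontrivial stabiliser (the three $2200$-orbits and $1111$) one must check that the bar-signs $(-1)^{\deg}$ combined with the position-reordering sign act by $+1$ on the stabiliser, so that none of these classes is forced to vanish; a short computation confirms this, leaving $11$ spanning classes $B_1,\dots,B_{11}$.

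Finally I would impose (HB). Each composition of $3$ into four slots, together with the single sliding letter $a\in H$, produces one relation $\Theta(t,u,v,wa)-\Theta(t,u,av,w)=\Theta(t,au,v,w)-\Theta(ta,u,v,w)$; antisymmetrizing it yields a linear relation among the $B_i$ whose signs record where $a$ falls in each slot's word order. For instance the base $(3,0,0,0)$ gives $B_1-B_2-B_3+B_4=0$ and the base $(2,1,0,0)$ gives $-B_2+B_5-B_8+B_9=0$. Assembling all such relations into a matrix and computing its rank, I expect rank exactly $5$, so that the sign-multiplicity, hence the multiplicity of $\Lambda^4H$, equals $11-5=6$, uniformly for $g\ge2$.

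The hard part is the sign bookkeeping: correctly tracking the bar-signs and the reordering signs when antisymmetrizing both (CS) and (HB), and then verifying that the (HB) matrix has rank exactly $5$ — neither more (which would require an unexpected dependency among the relations) nor fewer (which would require an extra relation among the $B_i$). As an independent check on the final count, the polynomial-functor description of $C_{1,2}\H(6)$ shows that the multiplicity of the degree-$4$ irreducible $[1^4]_\GL$ is independent of $g$ once $2g\ge 4$, so it must agree with the stable value $6$ recorded in Proposition~\ref{prop:GL_table}.
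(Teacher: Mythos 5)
Your proposal is correct, and it reaches the count $6$ by a route genuinely different from the paper's. The paper argues dually: it computes $\dim\Hom_{\GL}(C_{1,2}\H(6),\Lambda^4H)$ via Schur's lemma, assigning a scalar $k(J)$ to each of seven diagrams $J^1,\dots,J^7$ that span $C_{1,2}\H(6)$ up to permuting colorings, evaluating the $22$ theta-type diagram shapes by IHX/AS, and checking that the core IHX relations impose exactly one linear condition $q+r+s+t+2v=0$ on the seven parameters (the dumbbell and core-change consistency is delegated to \cite[Proposition~4.2]{NSS22JT}), so the Hom space is $6$-dimensional. You instead compute the sign-isotypic component of the multilinear part directly from the presentation, using Schur--Weyl duality. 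Your reading of the presentation is legitimate: the elimination of dumbbell generators in Section~\ref{subsec:pf_Thm1.1} and the derivation of the dumbbell (CS)/(HB) relations from the theta ones nowhere use (GC3), so $C_{1,2}\H$ is indeed presented by $\Theta$ with relations (i)--(iii) of Theorem~\ref{thm:omega2_presentation}; and $C_{1,2}\H(6)$ is a degree-$4$ polynomial functor of $H$ since no symplectic contraction appears in its definition, which makes the multiplicity manifestly $g$-independent for $g\ge2$. What your approach buys is this functorial uniformity and a basis-level picture of the $[1^4]$-isotypic space; what the paper's approach buys is the explicit parametrization $\pi_{p,q,r,s,t,u}$ of the Hom space, which is what Lemma~\ref{lem:3r=2t} and Proposition~\ref{prop:14120} actually consume later, so your classes $B_1,\dots,B_{11}$ would not feed directly into the rest of Section~\ref{sec:deg6}.

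The one step you deferred --- that the antisymmetrized (HB) matrix has rank exactly $5$ --- does hold, and your two sample relations are correct; I verified the bookkeeping independently. With your conventions, the relation attached to a base composition $(i,j,k,l)$ of $3$ reads
\[
A_{(i,j,k,l+1)} - (-1)^{k+l}A_{(i,j,k+1,l)} - (-1)^{3-i}A_{(i,j+1,k,l)} + (-1)^{3-i}A_{(i+1,j,k,l)} = 0,
\]
and reducing each $A$ to an orbit representative (the bar signs cancel globally since the total degree $4$ is even; (CS) contributes $(-1)^{(i+j)(k+l)+\sum_s\binom{d_s}{2}}$ resp.\ $(-1)^{ij+kl+\sum_s\binom{d_s}{2}}$), the twenty relations collapse to exactly five: the four bases of type $(3,0,0,0)$ all yield the single relation containing the $4000$-class, the twelve of type $(2,1,0,0)$ yield three relations, one containing each $2200$-class, and the four of type $(1,1,1,0)$ yield one relation containing the $1111$-class. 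Each of these five relations contains a class occurring in no other (the $4000$-class, the three $2200$-classes, the $1111$-class), so the rank is exactly $5$ and the multiplicity is $11-5=6$. One caution: your closing appeal to Proposition~\ref{prop:GL_table} is fine as a sanity check but cannot certify the rank, since that table is derived from the stable isomorphism with $\cok(6)$ and the Morita--Sakasai--Suzuki computation, and Proposition~\ref{prop:dim6} is intended precisely as a direct verification of that entry; the rank computation must be carried out, as above, for the proof to be self-contained.
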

\begin{proof}
First, consider a nontrivial $\GL$-homomorphism
$\Phi\colon H^{\otimes4}\to\Lambda^4H$.
As is well-known, the multiplicity of $[1^4]_\GL$ in the $\GL$-module $H^{\otimes4}$ is one when $g\ge2$.
Hence, by Schur's lemma, there exists some $k \in \Q$ such that
\[
\Phi(a_1\otimes a_2\otimes a_3\otimes a_4)=k(a_1\wedge a_2\wedge a_3\wedge a_4)
\]
for all $a_1,a_2,a_3,a_4\in H$. 

Next, let $\pi:C_{1,2}\H(6)\to \Lambda^4H$ be a $\GL$-homomorphism. 
Pick a tree-shaped Jacobi diagram $J$ of degree $6$ with $2$ dotted edges attached.
There are $4$ leaves in $J$ which we denote by $v_1$, $v_2$, $v_3$, $v_4$.
By setting their colors, we have a $\GL$-homomorphism $f\colon H^{\otimes 4}\to C_{1,2}\H(6)$ defined by
\[
f(a_1\otimes a_2\otimes a_3\otimes a_4)=J_{a_1,a_2,a_3,a_4},
\]
where $J_{a_1,a_2,a_3,a_4}\in C_{1,2}\H(6)$ is
obtained by coloring the leaf $v_i$ with $a_i$.
As we saw, there exists some $k(J) \in \Q$ such that
\[
\pi(J_{a_1,a_2,a_3,a_4})=(\pi\circ f)(a_1\otimes a_2\otimes a_3\otimes a_4)=k(J)\, a_1\wedge a_2\wedge a_3\wedge a_4.
\]
Note also that 
\begin{equation} \label{eq:piJsigma}
\pi(J_{a_{\sigma(1)}, a_{\sigma(2)},a_{\sigma(3)}, a_{\sigma(4)}})=\sign(\sigma)\pi(J_{a_1,a_2,a_3,a_4})
\end{equation}
for any permutation $\sigma$ of degree $4$.

In the following,
we will always draw hairs of the diagram $J$ vertically and make them distinguishable by their horizontal positions.
Then we denote the 4 leaves of $J$, ordered from left to right, by $v_1,v_2,v_3,v_4$. 
With this convention, we sometimes omit the $H$-coloring of $J_{a_1,a_2,a_3,a_4}$ from the figure. 
For example, 
\[
\thetagraph21001:=
\begin{tikzpicture}[scale=0.7, baseline={(0,0.3)},show background rectangle,
background rectangle/.style={fill=none},
        inner frame sep=1mm]
  \coordinate (origin) at (0,0); 
\draw (2.35,0) to[bend right=30] (2.5,0.15)--(2.5,1.35) to[bend right=30] (2.35,1.5) -- (0.5,1.5);
\draw (-2.35,0) to[bend left=30] (-2.5,0.15) -- (-2.5,1.35) to[bend left=30] (-2.35,1.5)-- (-0.5,1.5);
\draw (-2.35,0)--(2.35,0);
\draw (1.5,0) -- (1.5,0.85) to[bend right=20] (1.35,1) -- (0.5,1);
\draw (-1.5,0)-- (-1.5,0.85) to[bend left=20] (-1.35,1) --(-0.5,1);
\draw [red, very thick, dotted, ->-={.5}{red}] (0.5,1.5)--(-0.5,1.5);
\draw [red, very thick, dotted, ->-={.5}{red}] (0.5,1)--(-0.5,1);
\draw (-2-0.175,0) -- (-2-0.175,-0.4) [below=-1mm] node{$a_1$} ;
\draw (-2+0.375,0) -- (-2+0.375,-0.4) [below=-1mm] node{$a_2$};
\draw (-1+0.075-0.15+0.075,1) -- (-1+0.075-0.15+0.075,1-0.4) [below=-1mm] node{$a_3$};
\draw (2-0.075+0.15-0.075,0) -- (2-0.075+0.15-0.075,-0.4) [below=-1mm] node{$a_4$};
\end{tikzpicture}.
\]
We now compute the value of $\pi:C_{1,2}H(6)\to \Lambda^4H$ in some explicit way.
Consider the following diagrams:
\begin{align*}
& J^1 = \thetagraph40000, \ 
J^2 = \thetagraph20200, \ 
J^3 = \thetagraph10210, \\
& J^4 = \thetagraph30100, \ 
J^5 = \thetagraph20110, \ 
J^6 = \thetagraph02020, \\ 
&  J^7 = \thetagraph03010.
\end{align*}
We can check that any hairy Lie graph in $C_{1,2}\H(6)$ colored with $a_1$, $a_2$, $a_3$, $a_4\in H$ is written as a linear combination of the above $7$ diagrams (viewed as elements in $C_{1,2}\H(6)$ by the above convention) and those obtained from such diagrams by permuting the $H$-coloring.
Let us denote 
\begin{align}
& p = k(J^1),\ 
q = k(J^2), \ 
r = k(J^3), \ 
s = k(J^4), \ \nonumber \\ 
& t = k(J^5), \
u = k(J^6), \ 
v = k(J^7). \label{eq:pqrstuv} 
\end{align}
Together with the property~\eqref{eq:piJsigma},
these values determine the map $\pi$. 
We give explicit values for the generators of theta type in \eqref{eq:C12generators}.
If we remove the $H$-coloring of any generator of theta type, then up to the AS relation we obtain one of the following $22$ diagrams:
\begin{equation} \label{eq:manyhairs}
\begin{array}{cccc}
\thetagraph00400,&\thetagraph10300,&\thetagraph20200,&\thetagraph11200,\\
2q&q&q&0\\
\thetagraph10210,&\thetagraph10201,&\thetagraph30100,&\thetagraph21100,\\
r&q+r&s&-q+s\\
\thetagraph20110,&\thetagraph20101,&\thetagraph11110,\\
t&q+t&-r+t 
\end{array}
\end{equation}
and
\[
\begin{array}{cccc}
\thetagraph40000,&\thetagraph31000,&\thetagraph30010,&\thetagraph30001,\\
p&p-s&-s-v&-v\\
\thetagraph22000,&\thetagraph20020,&\thetagraph20002,&\thetagraph21010,\\
p-q&-q+u&u&q+r+v\\
\thetagraph21001,&\thetagraph20011,&\thetagraph11011.\\
-q-t-v&-q-t+u&-q+r-2t+u
\end{array}
\]
Here, the values of $k$ are written under each diagram. 
These values are computed by expressing each diagram as a linear combination of $J^1,\ldots,J^7$ by the IHX and AS relations.
Using the core change relation we can obtain the values for the generators of dumbbell type, which we omit.

We need to check that these values are consistent with all the relations in $C_{1,2}\H$.
By the core IHX relation 
\[
\thetagraph20110 + \thetagraph21010-\thetagraph30010=0,
\]
we obtain $t + (q+r+v) - (-s-v) = 0$, namely
\begin{equation} \label{eq:qrst2v}
q+r+s+t+2v=0.
\end{equation}
Conversely, this condition ensures that the assignment~\eqref{eq:pqrstuv} gives rise to a well-defined map $\pi: C_{1,2}\H(6) \to \Lambda^4 H$. 
We first check that any core IHX relation among theta graphs is a consequence of \eqref{eq:qrst2v}.
To see this, we need to consider the IHX relations around the two trivalent vertices of the core theta graph.
Furthermore, it suffices to consider the IHX relation which moves hairs attached to the bottom horizontal edge in the $11$ diagrams in \eqref{eq:manyhairs}.
Also, exactly in the same way as \cite[Proposition~4.2]{NSS22JT}, the core IHX relations between dumbbell graphs and the core change relation between dumbbell and theta graphs are satisfied without any further condition on $p,q,r,s,t,u,v$.

We have shown that $\pi$ is determined by $7$ parameters $p,q,r,s,t,u,v$ subject to the condition \eqref{eq:qrst2v}.
This proves the proposition. 
\end{proof}
We want to construct a $\GL$-homomorphism $C_{1,2}\H(6) \to \Lambda^4 H$ which factors through $\widetilde{\Omega}_2(6)$.
By Proposition~\ref{subsec:table}, in a stable range,
the multiplicity of $[1^4]_\GL$ in $C_{1,2}\H(6)$ and in $\widetilde{\Omega}_2(6)$ is $6$ and $2$, respectively.
The next lemma gives explicit homomorphisms which detect the two components in $\widetilde{\Omega}_2(6)$.
\begin{lemma} \label{lem:3r=2t}
Let $g \ge 2$ and keep the notation as in the proof of Proposition~{\rm\ref{prop:dim6}}.
Assume that $q+r+s+t+2v=0$.
The homomorphism $\pi :C_{1,2}\H(6) \to \Lambda^4 H$ factors through $\widetilde{\Omega}_2(6)$ if and only if $p=-q=u=v$ and $3r-2t=0$.
\end{lemma}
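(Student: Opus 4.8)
The plan is to combine Theorem~\ref{thm:omega2} with the explicit description of $\pi$ produced in the proof of Proposition~\ref{prop:dim6}. By Theorem~\ref{thm:omega2} we have $\widetilde{\Omega}_2 = C_{1,2}\H / R$, where $R$ is spanned by the elements $\Theta_R(t,u,v,w)$. Hence $\pi$ factors through $\widetilde{\Omega}_2(6)$ if and only if $\pi$ annihilates the degree-$6$ part of $R$, that is,
\[
\pi(\Theta_R(t,u,v,w)) = 0 \qquad \text{for all } t,u,v,w \in \mathcal{T} \text{ with } \deg t + \deg u + \deg v + \deg w = 4.
\]
Since $\pi$ is already multilinear in the $H$-colorings, it suffices to test this on pure tensors, and by $\GL$-equivariance one needs only finitely many representative colorings, one for each way of distributing the four hairs among the slots $t,u,v,w$.

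First I would establish necessity. For a pure-tensor coloring, each of the three theta graphs making up $\Theta_R(t,u,v,w)$ can be rewritten, using the IHX and AS relations, as a $\Q$-linear combination of the seven diagrams $J^1,\ldots,J^7$ and their relabelings; applying $\pi$ together with the tabulated values $p,q,r,s,t,u,v$ then expresses $\pi(\Theta_R(t,u,v,w))$ as an explicit scalar multiple of $a_1 \wedge a_2 \wedge a_3 \wedge a_4$. Choosing a handful of colorings — distributing the four hairs so that the three terms collapse onto $J^1,J^2,J^6,J^7$ in one case and onto $J^3,J^5$ in another — forces the vanishing conditions
\[
p = u, \qquad p = v, \qquad p = -q, \qquad 3r = 2t.
\]
The constraint $3r = 2t$ is the characteristic two-loop relation and arises from the three-term structure of $\Theta_R$, in the same way that the factor $3$ appeared in the proof of Lemma~\ref{lem:elementsR2}(2).

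For the converse I would use a dimension count rather than evaluating $\pi$ on every generator. The surjection $C_{1,2}\H(6) \twoheadrightarrow \widetilde{\Omega}_2(6)$ induces an injection of $\GL$-homomorphism spaces whose image is exactly the set of $\pi$ that factor; by Proposition~\ref{prop:GL_table} (in the stable range) this image has dimension equal to the multiplicity of $[1^4]_\GL$ in $\widetilde{\Omega}_2(6)$, namely $2$. On the other hand, a direct check shows that the four functionals $p-u$, $p-v$, $p+q$, $3r-2t$ are linearly independent modulo the standing relation $q+r+s+t+2v = 0$, so the conditions above cut the $6$-dimensional space of $\pi$ (Proposition~\ref{prop:dim6}) down to a $2$-dimensional subspace, on which the free parameters may be taken to be $p$ and $r$. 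Since the space of factoring $\pi$ is contained in this $2$-dimensional solution space by necessity, and both have dimension $2$, they coincide; this proves the ``if'' direction at the same time and confirms that no further conditions beyond the four listed can appear.

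The main obstacle is the bookkeeping in the necessity computation: each term of $\Theta_R(t,u,v,w)$ must be expanded into the basis $J^1,\ldots,J^7$ with the correct signs coming from the AS relation and from the antisymmetrization in $\Lambda^4 H$ recorded in \eqref{eq:piJsigma}, and one must track which of the three dotted-edge configurations contributes which diagram. Deriving $3r = 2t$ is the most delicate point, since it requires the three terms to interact rather than each vanishing on its own. If one prefers to avoid invoking the stable multiplicity of $\widetilde{\Omega}_2(6)$ and argue uniformly for all $g \ge 2$, then sufficiency can instead be verified directly by computing $\pi(\Theta_R(t,u,v,w))$ for each of the finitely many hair distributions of total degree $4$ and checking that the resulting linear expression in $p,q,r,s,t,u,v$ vanishes once the four conditions are imposed.
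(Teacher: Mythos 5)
Your proposal is correct and, in substance, follows the same route as the paper: both reduce the factorization question to the condition $\pi(C_{1,2}\H(6)\cap R)=0$ via Theorem~\ref{thm:omega2}; both derive necessity by evaluating $\pi$ on a few explicit elements of $R$ with four hairs (the paper tests four three-term combinations of theta graphs, obtaining $2v+2q=0$, $2u+2q=0$, $p+q=0$ and $2t-3r=0$, which is exactly the list of conditions you predict); and both obtain sufficiency in the stable range from the multiplicity count of Proposition~\ref{prop:GL_table} combined with a dimension comparison. Your linear algebra here is sound: the four functionals $p-u$, $p-v$, $p+q$, $3r-2t$ are indeed independent modulo $q+r+s+t+2v=0$, and your parametrization of the $2$-dimensional solution space by $(p,r)$ is consistent --- in fact it is more reliable than the paper's displayed second spanning vector $\pi_{0,0,2,0,3,0}$, whose entry $s=0$ violates the constraint $r+s+t=q$ forced by $v=-q$ (it should read $s=-5$), an apparent typo your coordinate-free formulation avoids. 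The one genuine divergence is how the stable conclusion is transported to all $g\ge2$, which the statement requires: the paper composes the stabilization map $\widetilde{\Omega}_2(6)[g]\to\widetilde{\Omega}_2(6)[g']$ for $g'\gg0$ with $\pi_{p,q,r,s,t,u}$ and the projection $\Lambda^4H[g']\to\Lambda^4H[g]$, and checks that the composite agrees with $\pi_{p,q,r,s,t,u}$ on $C_{1,2}\H(6)[g]$; you instead propose, as a fallback, a brute-force evaluation of $\pi(\Theta_R(t,u,v,w))$ over the finitely many hair distributions of total degree $4$. Your fallback is legitimate (the scalar $k$ depends only on hair positions, so the check is finite and genus-independent), but note that your primary dimension count alone proves the ``if'' direction only for $g\gg0$, so for $g\ge2$ one of the two transfers is mandatory rather than optional; the paper's stabilization trick buys this uniformity at essentially no computational cost, whereas your direct check trades softness for the bookkeeping you yourself identify as the main burden.
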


\begin{proof}
Since $\widetilde{\Omega}_2=C_{1,2}\H/R$, where the space $R$ is defined just after Theorem~\ref{thm:omega2}, the homomorphism $\pi$ factors through $\widetilde{\Omega}_2(6)$ if and only $\pi(C_{1,2}\H(6) \cap R) = 0$.

First, assume that $\pi(C_{1,2}\H(6) \cap R)=0$. 
The following four linear combinations are elements of $C_{1,2}\H(6) \cap R$:
\begin{align*}
&\thetagraph03010-\thetagraph30001+\thetagraph00400,\\
&\thetagraph02020+\thetagraph20002+\thetagraph00400,\\
&\thetagraph22000+\thetagraph20200-\thetagraph02200,\\
&\thetagraph11101
-\thetagraph10210
+\thetagraph01111.
\end{align*}
Hence, we obtain
\[
v-(-v)+2q=u+u+2q=(p-q)+q-(-q)=(-r+t)-r+(-r+t)=0.
\]
These equalities are equivalent to $p=-q=u=v$ and $3r-2t=0$.

Next, we show that $\pi$ factors through $\widetilde{\Omega}_2(6)$ assuming the equalities $p=-q=u=v$ and $3r-2t=0$.
With Proposition~\ref{prop:dim6} in mind, we denote 
\[
\pi = \pi_{p,q,r,s,t,u}\colon C_{1,2}\H (6) \to\Lambda^4H
\]
to clarify the values of $p,q,r,s,t,u$.
The remaining parameter $v$ is given by $v=-\dfrac{1}{2}(q+r+s+t)$.
As we saw above, if $\pi$ factors through $\widetilde{\Omega}_2(6)$, $p,q,r,s,t,u$ must satisfy $p=-q=u=v$ and $3r-2t=0$.
Hence, every homomorphism which factors through $\widetilde{\Omega}_2(6)$ must be a linear combination of $\pi_{1,-1,0,-1,0,1}$ and $\pi_{0,0,2,0,3,0}$.
Assume that $g$ is sufficiently large.
Then, the multiplicity of $[1^4]_{\GL}$ in $\widetilde{\Omega}_2(6)$ is $2$.
Hence, $\pi_{1,-1,0,-1,0,1}$ and $\pi_{0,0,2,0,3,0}$ must factor through $\widetilde{\Omega}_2(6)$.

We now consider the case of arbitrary genus $g \ge 2$.
To clarify the genus, we denote 
\[
C_{1,2}\H(6)[g] =C_{1,2}\H(6),\
\Omega_2(6)[g]=\Omega_2(6),\ \text{and } \Lambda^4H[g]=\Lambda^4H.
\]
Let $g'$ be a sufficiently large integer.
For $(p,q,r,s,t,u)=(1,-1,0,-1,0,1)$ or $(0,0,2,0,3,0)$, 
the composite map
\[
\widetilde{\Omega}_2(6)[g] \xrightarrow{\text{inc.}} \widetilde{\Omega}_2(6)[g'] \xrightarrow{\pi_{p,q,r,s,t,u}}
\Lambda^4H[g']
\xrightarrow{\text{proj.}}
\Lambda^4H[g]
\]
is well-defined, and its composition with the projection $C_{1,2}\H(6)[g] \to \widetilde{\Omega}_2(6)[g]$ coincides with $\pi_{p,q,r,s,t,u}\colon C_{1,2}\H(6)[g]\to \Lambda^4H[g]$. 
Hence, $\pi_{p,q,r,s,t,u}$ factors through $\widetilde{\Omega}_2(6)$ for any $g\ge 2$.
\end{proof}

\subsection{Proof of Theorem~\ref{thm:main2}}
Theorem~\ref{thm:main2} is deduced from Theorem~\ref{thm:cok(6)} and the following:

\begin{proposition} \label{prop:14120}
When $g\ge6$, 
\[
[1^4]_\Sp+[1^2]_\Sp+[0]_\Sp\subset \Im( \widetilde{\Tr}_2 \colon \h(6) \cap \Ker  \widetilde{\Tr}_1 \to \widetilde{\Omega}_2(6)).
\]
\end{proposition}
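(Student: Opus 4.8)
The plan is to deduce the statement from Theorem~\ref{thm:cok(6)} together with the explicit detecting homomorphisms produced in Lemma~\ref{lem:3r=2t}. By Theorem~\ref{thm:cok(6)}, the quotient $(\h(6)\cap\Ker\widetilde{\Tr}_1)/\m(6)$ is isomorphic to $[1^4]_\Sp\oplus[1^2]_\Sp\oplus[0]_\Sp$, a direct sum of three pairwise non-isomorphic irreducible $\Sp$-modules. Since $\widetilde{\Tr}_2$ vanishes on $\m(6)$, it descends to an $\Sp$-homomorphism on this quotient, and by Schur's lemma the proposition is equivalent to the assertion that $\widetilde{\Tr}_2$ is nonzero on each of the three summands.

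To detect nonvanishing I would use the two $\GL$-homomorphisms $\pi_{1,-1,0,-1,0,1}$ and $\pi_{0,0,2,0,3,0}\colon\widetilde{\Omega}_2(6)\to\Lambda^4H$, which factor through $\widetilde{\Omega}_2(6)$ by Lemma~\ref{lem:3r=2t}. As an $\Sp$-module one has $\Lambda^4H\cong[1^4]_\Sp\oplus[1^2]_\Sp\oplus[0]_\Sp$, the decomposition obtained by successively contracting with the symplectic form. Composing $\widetilde{\Tr}_2$ with each $\pi_i$ and then with the $\Sp$-equivariant projection of $\Lambda^4H$ onto a chosen summand yields, for each of the three types, a candidate $\Sp$-homomorphism out of $\cok(6)$ whose target is exactly that irreducible. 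The remaining task is to show that, restricted to the copy of $[\lambda]_\Sp$ sitting in $\Ker\widetilde{\Tr}_1/\m(6)$, at least one such composite is nonzero; $\Sp$-equivariance then upgrades this to $[\lambda]_\Sp\subset\Im\widetilde{\Tr}_2$.

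For the top type $[1^4]_\Sp$ this can be read off directly from the grand isomorphism~\eqref{eq:r-loop_part}: since $[1^4]_\Sp$ has size $4$ and lies in $\Ker\widetilde{\Tr}_1$, its image under $\pi\circ\widetilde{\Tr}$ must land in the only top-level summand carrying size-$4$ diagrams, namely $\widetilde{\Omega}_{2,\langle 4\rangle}$; injectivity of $\pi\circ\widetilde{\Tr}$ then forces $\pi\circ\widetilde{\Tr}_2$, and hence $\widetilde{\Tr}_2$, to be nonzero on $[1^4]_\Sp$. For $[1^2]_\Sp$ and $[0]_\Sp$ this shortcut fails, since under the grand isomorphism these are captured by the top-level parts of $\widetilde{\Tr}_3$ and $\widetilde{\Tr}_4$ respectively, and the top-level $r=2$ part cannot contain diagrams of size $2$ or $0$. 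The content of the proposition is precisely that the \emph{lower-order} parts of $\widetilde{\Tr}_2$ already see these two components. So I would exhibit, for each of $\lambda=[1^2]$ and $\lambda=[0]$, an explicit degree-$6$ tree-shaped Jacobi diagram $X_\lambda\in\h(6)$ lying in $\Ker\widetilde{\Tr}_1$ and spanning a highest-weight line of the $\lambda$-isotypic piece modulo $\m(6)$, and then compute $\widetilde{\Tr}_2(X_\lambda)$ by attaching two dotted edges in all unordered ways, rewriting the result in terms of the generators $\Theta(t,u,v,w)$ and $\mathcal{D}(t,u,v,w)$ via the relations of Theorem~\ref{thm:omega2_presentation}, and finally evaluating $\pi_i$ through the parameters $p,q,r,s,t,u,v$ of Proposition~\ref{prop:dim6}. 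One checks that the value lands nontrivially in the $[\lambda]_\Sp$-summand of $\Lambda^4H$.

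The main obstacle I anticipate is the trivial summand $[0]_\Sp$, the Galois part. Producing an $\Sp$-invariant element of $\h(6)$ that is genuinely nontrivial in the cokernel, and verifying that it lies in $\Ker\widetilde{\Tr}_1$, is delicate; moreover its $2$-loop trace is detected only after the full double contraction $\Lambda^4H\to[0]_\Sp$, so there is no help from the grand isomorphism, and the same remark applies to $[1^2]_\Sp$ via a single contraction. Keeping the dotted-edge bookkeeping consistent with the relations of Theorem~\ref{thm:omega2_presentation} while simultaneously tracking the symplectic contractions is the principal source of difficulty, and the two independent detecting homomorphisms of Lemma~\ref{lem:3r=2t} are exactly what provide enough room to separate the three isotypic components.
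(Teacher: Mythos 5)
Your scaffolding is sound, and your argument for the $[1^4]_\Sp$ component is a genuinely nice alternative to the paper's: since size-$4$ partitions occur only in the summand $\widetilde{\Omega}_{2,\langle 4\rangle}$ of $\widetilde{\Omega}_{\rm top}(6)$, injectivity of the composition \eqref{eq:r-loop_part} forces $\pi\circ\widetilde{\Tr}_2$, hence $\widetilde{\Tr}_2$, to be nonzero on the copy of $[1^4]_\Sp$ supplied by Theorem~\ref{thm:cok(6)}. However, for $[1^2]_\Sp$ and $[0]_\Sp$ your proposal is a plan rather than a proof, and the missing step is exactly the technical heart of the paper's argument. You say you \emph{would exhibit} explicit elements of $\h(6)\cap\Ker\widetilde{\Tr}_1$ detecting these two components, but you never produce them, nor indicate how to verify the two required properties. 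The paper constructs them concretely: for a coloring set closed under $(x,y)\mapsto(-y,x)$, e.g.\ $S=\{(a_j,b_j),(-b_j,a_j)\}_{j=1}^4$ for $[0]_\Sp$ and variants $S'$, $S''$ containing pairs such as $(a_1,a_2),(-a_2,a_1)$ for $[1^2]_\Sp$ and $[1^4]_\Sp$, it forms the combination $X_S$ of $-5$ times a caterpillar diagram with nested dashed pairings plus $2$ times one with interleaved pairings, proves $\widetilde{\Tr}_1(X_S)=0$ using the hair-sliding relation and core symmetry, and computes $(\pi\circ\widetilde{\Tr}_2)(X_S)=256(16q+3r+3u-2t)\sum_{1\le i<j\le 4}a_i\wedge b_i\wedge a_j\wedge b_j$, which is nonzero for $\pi_{1,-1,0,-1,0,1}$ of Lemma~\ref{lem:3r=2t}; composing with the contractions $C\colon H^{\otimes 4}\to\Q$ and $C'\colon H^{\otimes 4}\to H^{\otimes 2}$ then yields the trivial summand and a maximal vector of $[1^2]_\Sp$. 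Without such elements and computations, nonvanishing of $\widetilde{\Tr}_2$ on these two summands is not established --- and you yourself flag this as the main obstacle.

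A second defect concerns the range of validity. Your reduction leans on Theorem~\ref{thm:cok(6)} and on the isomorphism \eqref{eq:r-loop_part}, both stated only for $g$ sufficiently large, so even if completed your argument would prove the statement only stably, not for the claimed bound $g\ge 6$. The paper avoids this: its proof of the proposition uses only Lemma~\ref{lem:3r=2t} (valid for $g\ge 2$) and the explicit elements, whose colorings require precisely $g\ge 6$; Theorem~\ref{thm:cok(6)} enters only afterwards, when deducing Theorem~\ref{thm:main2}. Note also that the paper does not need its detecting elements to span highest-weight lines of the cokernel, as your plan requires; it suffices that their images under $C'\circ\pi\circ\widetilde{\Tr}_2$ be maximal vectors, which is far easier to arrange and to check.
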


For the proof of Proposition~\ref{prop:14120}, we use the following diagrammatic notation to describe a specific linear combination of hairy Lie graphs.
Let $S = \{ s_1,\ldots, s_n\}$ be a finite subset of $H\times H$ and $m$ an integer not greater than $n$.
Let $X$ be a tree-shaped Jacobi diagram with several dotted edges, such that among its leaves, exactly $2m$ leaves are not joined to dotted edges.
Suppose that the $2m$ leaves decompose into ordered $m$-pairs $\{ (v_i, w_i)\}_{i=1}^m$.
We denote by $X(x_1, y_1,\ldots, x_m, y_m)$ the hairy Lie graph obtained from $X$ by coloring the leaves $v_i$, $w_i$ with $x_i$, $y_i\in H$, respectively, for all $1 \le i \le m$.
Then we define
\[
X_S:=\sum X(x_1, y_1,\ldots, x_m, y_m),
\]
where the sum is taken over all
ordered $m$-pairs
$\{ (x_i,y_i)\}_{i=1}^m$ of elements of $S$ such that the set $\{ x_i, y_i \}_{i=1}^m$ is linearly independent in $H$.
We draw $X_S$ as a diagram obtained from $X$ by adding $m$ dashed lines directed from $v_i$ to $w_i$ for each $1\le i\le m$.
We call $S$ the \emph{coloring set} of $X_S$.

For example, when $S=\{(a_1,b_1),(-b_1,a_1),(a_2,a_3)\}$,
\begin{align*}
\kushiitwo14&=
\hspace*{-0.9cm}
\begin{tikzpicture}[domain=-3:4,scale=0.45, baseline={(0,0.4)}]
\draw (-3,0)--(4,0);
\draw (-2,0)--(-2,1) node[above, yshift=-1mm] {$b_1$};
\draw (-1,0)--(-1,1) node[above, yshift=-1mm] {$a_3$};
\draw (0,0)--(0,1) coordinate(P4);
\draw (1,0)--(1,1) coordinate (P5);
\draw (2,0)--(2,1) node[above, yshift=-1mm] {$a_2$};
\draw (3,0)--(3,1) node[above, yshift=-1mm] {$a_1$};
\draw[red, very thick, dotted,   ->-={.5}{red}] (-3,0) to [out=200,in=-20] (4,0);
\draw[red, very thick, dotted, -<-={.5}{red}] (P4) to [out=80,in=100] (P5);
\end{tikzpicture}
\hspace*{-0.9cm}
-
\hspace*{-0.9cm}
\begin{tikzpicture}[domain=-3:4,scale=0.45, baseline={(0,0.4)}]
\draw (-3,0)--(4,0);
\draw (-2,0)--(-2,1) node[above, yshift=-1mm] {$a_1$};
\draw (-1,0)--(-1,1) node[above, yshift=-1mm] {$a_3$};
\draw (0,0)--(0,1) coordinate(P4);
\draw (1,0)--(1,1) coordinate (P5);
\draw (2,0)--(2,1) node[above, yshift=-1mm] {$a_2$};
\draw (3,0)--(3,1) node[above, yshift=-1mm] {$b_1$};
\draw[red, very thick, dotted,   ->-={.5}{red}] (-3,0) to [out=200,in=-20] (4,0);
\draw[red, very thick, dotted, -<-={.5}{red}] (P4) to [out=80,in=100] (P5);
\end{tikzpicture}
\hspace*{-0.9cm}\\
&\quad+
\hspace*{-0.9cm}
\begin{tikzpicture}[domain=-3:4,scale=0.45, baseline={(0,0.4)}]
\draw (-3,0)--(4,0);
\draw (-2,0)--(-2,1) node[above, yshift=-1mm] {$a_3$};
\draw (-1,0)--(-1,1) node[above, yshift=-1mm] {$b_1$};
\draw (0,0)--(0,1) coordinate(P4);
\draw (1,0)--(1,1) coordinate (P5);
\draw (2,0)--(2,1) node[above, yshift=-1mm] {$a_1$};
\draw (3,0)--(3,1) node[above, yshift=-1mm] {$a_2$};
\draw[red, very thick, dotted,   ->-={.5}{red}] (-3,0) to [out=200,in=-20] (4,0);
\draw[red, very thick, dotted, -<-={.5}{red}] (P4) to [out=80,in=100] (P5);
\end{tikzpicture}
\hspace*{-0.9cm}
-
\hspace*{-0.9cm}
\begin{tikzpicture}[domain=-3:4,scale=0.45, baseline={(0,0.4)}]
\draw (-3,0)--(4,0);
\draw (-2,0)--(-2,1) node[above, yshift=-1mm] {$a_3$};
\draw (-1,0)--(-1,1) node[above, yshift=-1mm] {$a_1$};
\draw (0,0)--(0,1) coordinate(P4);
\draw (1,0)--(1,1) coordinate (P5);
\draw (2,0)--(2,1) node[above, yshift=-1mm] {$b_1$};
\draw (3,0)--(3,1) node[above, yshift=-1mm] {$a_2$};
\draw[red, very thick, dotted,   ->-={.5}{red}] (-3,0) to [out=200,in=-20] (4,0);
\draw[red, very thick, dotted, -<-={.5}{red}] (P4) to [out=80,in=100] (P5);
\end{tikzpicture}
\hspace*{-0.9cm}.
\end{align*}

\begin{proof}[Proof of Proposition~\ref{prop:14120}]
First, we prove that 
$[0]_\Sp\subset \Im( \widetilde{\Tr}_2 \colon \h(6) \cap \Ker  \widetilde{\Tr}_1 \to \widetilde{\Omega}_2(6))$.
For $S:=\{(a_j,b_j), (-b_j,a_j)\}_{j=1}^4$,
we show that 
\[
X_S:=-5\kushii0
+2\kushiii
\in\Ker \widetilde{\Tr}_1.
\]
By the definition of dashed lines, we compute 
\begin{align}
\widetilde{\Tr}_1\left(\kushii0\right)
&=2\left(\kushii1+\kushii2\right. \nonumber \\
&\left.+\kushii3+\kushii4\right), \label{eq:Tr1_dashed}
\end{align}
where the coloring set of the right hand side is also $S$.
Since the set $S$ is stationary under the involution $(x,y) \mapsto (-y,x)$, changing the direction of a dashed line gives a minus sign. 
As shown in \cite[Proof of Theorem~1.1]{Con16}, one can slide a hair along a dotted edge in $\widetilde{\Omega}_1$.
Note that this is the same as the relation (C2) for another loop space $\Omega$ whose definition will be recalled in Section~\ref{sec:Omega1_Omega_2}, and that $\widetilde{\Omega}_1 = \Omega_1$.
By using this relation and the AS and IHX relations, the right hand side of \eqref{eq:Tr1_dashed} is equal to
$-4
 \begin{tikzpicture}[scale=0.4, baseline={(0,0)}]
 \draw ([shift={(0,0)}]50:3) arc [radius=3, start angle = 50, end angle=370];
  \draw [red, very thick, dotted, ->-={.5}{red}] ([shift={(0,0)}]10:3) arc [radius=3, start angle = 10, end angle=50];
 \draw (0:3)--(0:2) coordinate (A);
 \draw (60:3)--(60:2) coordinate (B);
 \draw (120:3)--(120:2) coordinate (C);
 \draw (180:3)--(180:2) coordinate (D);
  \draw (240:3)--(240:2) coordinate (E);
 \draw (300:3)--(300:2) coordinate (F);
\draw [blue, very thick, dashed, ->-={.5}{blue}] (B) to[bend left=30] (C);
\draw [blue, very thick, dashed, ->-={.2}{blue}] (A) to[bend right=20] (E);
\draw [blue, very thick, dashed, ->-={.8}{blue}] (F) to[bend right=20] (D);
 \end{tikzpicture}$. 
Here, we have used the following equalities which are the consequences of core symmetry:
\[
\begin{tikzpicture}[scale=0.4, baseline={(0,0)}]
 \draw ([shift={(0,0)}]50:3) arc [radius=3, start angle = 50, end angle=370];
  \draw [red, very thick, dotted, ->-={.5}{red}] ([shift={(0,0)}]10:3) arc [radius=3, start angle = 10, end angle=50];
 \draw (0:3)--(0:2) coordinate (A);
 \draw (60:3)--(60:2) coordinate (B);
 \draw (120:3)--(120:2) coordinate (C);
 \draw (180:3)--(180:2) coordinate (D);
  \draw (240:3)--(240:2) coordinate (E);
 \draw (300:3)--(300:2) coordinate (F);
\draw [blue, very thick, dashed, ->-={.5}{blue}] (B) to[bend left=30] (C);
\draw [blue, very thick, dashed, ->-={.5}{blue}] (A) -- (D);
\draw [blue, very thick, dashed, ->-={.5}{blue}] (F) to[bend right=30] (E);
 \end{tikzpicture}
 =
 \begin{tikzpicture}[scale=0.4, baseline={(0,0)}]
 \draw ([shift={(0,0)}]50:3) arc [radius=3, start angle = 50, end angle=370];
  \draw [red, very thick, dotted, ->-={.5}{red}] ([shift={(0,0)}]10:3) arc [radius=3, start angle = 10, end angle=50];
 \draw (0:3)--(0:2) coordinate (A);
 \draw (60:3)--(60:2) coordinate (B);
 \draw (120:3)--(120:2) coordinate (C);
 \draw (180:3)--(180:2) coordinate (D);
  \draw (240:3)--(240:2) coordinate (E);
 \draw (300:3)--(300:2) coordinate (F);
\draw [blue, very thick, dashed, ->-={.2}{blue}] (B) -- (E);
\draw [blue, very thick, dashed, ->-={.2}{blue}] (A) -- (D);
\draw [blue, very thick, dashed, ->-={.2}{blue}] (F) -- (C);
 \end{tikzpicture}
=
 \begin{tikzpicture}[scale=0.4, baseline={(0,0)}]
 \draw ([shift={(0,0)}]50:3) arc [radius=3, start angle = 50, end angle=370];
  \draw [red, very thick, dotted, ->-={.5}{red}] ([shift={(0,0)}]10:3) arc [radius=3, start angle = 10, end angle=50];
 \draw (0:3)--(0:2) coordinate (A);
 \draw (60:3)--(60:2) coordinate (B);
 \draw (120:3)--(120:2) coordinate (C);
 \draw (180:3)--(180:2) coordinate (D);
  \draw (240:3)--(240:2) coordinate (E);
 \draw (300:3)--(300:2) coordinate (F);
\draw [blue, very thick, dashed, ->-={.5}{blue}] (A) -- (D);
\draw [blue, very thick, dashed, ->-={.3}{blue}] (B) to[bend right=15] (F);
\draw [blue, very thick, dashed, ->-={.3}{blue}] (C) to[bend left=15] (E);
 \end{tikzpicture}
=0.
\]
In the same way, we obtain the equality
\[
\widetilde{\Tr}_1\left(\kushiii\right)
= -10\begin{tikzpicture}[scale=0.4, baseline={(0,0)}]
 \draw ([shift={(0,0)}]50:3) arc [radius=3, start angle = 50, end angle=370];
  \draw [red, very thick, dotted, ->-={.5}{red}] ([shift={(0,0)}]10:3) arc [radius=3, start angle = 10, end angle=50];
 \draw (0:3)--(0:2) coordinate (A);
 \draw (60:3)--(60:2) coordinate (B);
 \draw (120:3)--(120:2) coordinate (C);
 \draw (180:3)--(180:2) coordinate (D);
  \draw (240:3)--(240:2) coordinate (E);
 \draw (300:3)--(300:2) coordinate (F);
 \draw [blue, very thick, dashed, ->-={.5}{blue}] (B) to[bend left=30] (C);
\draw [blue, very thick, dashed, ->-={.2}{blue}] (A) to[bend right=20] (E);
\draw [blue, very thick, dashed, ->-={.8}{blue}] (F) to[bend right=20] (D);
 \end{tikzpicture},
\]
and hence $\widetilde{\Tr}_1(X_S) = 0$.
Now, we compute the value $(\pi\circ \widetilde{\Tr}_2)(X_S)$, where $\pi: \widetilde{\Omega}_2(6) \to \Lambda^4 H$ is the homomorphism in Lemma~\ref{lem:3r=2t}. 
\begin{align*}
&\widetilde{\Tr}_2\left(\kushii0\right)\\
&=8\left(\kushiitwo12+\kushiitwo13+\kushiitwo14\right.\\
&\left.+\kushiitwo23+\kushiitwo24+\kushiitwo34\right),
\end{align*}
where the coloring set of the right hand side is also $S$.
Then $\pi$ sends it to 
\begin{align*}
&8\{-2q-(q+r)-u-(q+r)-u-8q\}\Bigl(8\sum_{1\le i<j\le 4}a_i\wedge b_i\wedge a_j\wedge b_j\Bigr)\\
&=-128(6q+r+u)\sum_{1\le i<j\le 4}a_i\wedge b_i\wedge a_j\wedge b_j.
\end{align*}
In the same way, we obtain that 
\begin{align*}
&\widetilde{\Tr}_2\left(\kushiii\right)\\
&=8\left(\kushiiitwo23+\kushiiitwo24+\kushiiitwo12\right.\\
&\left.+\kushiiitwo34+\kushiiitwo13+\kushiiitwo14\right).
\end{align*}
The map $\pi$ sends it to
\begin{align*}
&8\{-2t+(-q+u)+q-2t+(q+r)+q\}\Bigl(8\sum_{1\le i<j\le 4}a_i\wedge b_i\wedge a_j\wedge b_j\Bigr)\\
&=64(2q+u+r-4t)\sum_{1\le i<j\le 4}a_i\wedge b_i\wedge a_j\wedge b_j.
\end{align*}
Hence, 
\[
(\pi \circ \widetilde{\Tr}_2) (X_S) = 256(16q+3r+3u-2t)\sum_{1\le i<j\le 4}a_i\wedge b_i\wedge a_j\wedge b_j.
\]
Let $C\colon H^{\otimes 4}\to \Q$ be the contraction map defined by
\[
C(x\otimes y\otimes  z\otimes w)=(x\cdot y)(z\cdot w).
\]
Since $C(\sum_{1\le i<j\le 4}a_i\wedge b_i\wedge a_j\wedge b_j)$ is nontrivial, we see that when $p=1$, $q=s=-1$, $u=1$, and $r=t=0$, namely when $\pi = \pi_{1,-1,0,-1,0,1}$ in the notation of the proof of Lemma~\ref{lem:3r=2t}, we obtain a nontrivial $\Sp$-homomorphism 
\[
\h(6) \cap \Ker\widetilde{\Tr}_1 \xrightarrow{\widetilde{\Tr}_2} \widetilde{\Omega}_2(6) \xrightarrow{C \circ \pi} \Q.
\]
Hence, $[0]_\Sp$ is contained in the image of $\widetilde{\Tr}_2: \h(6) \cap \Ker\widetilde{\Tr}_1 \to \widetilde{\Omega}_2(6)$.

Next, we treat $[1^2]_\Sp$ and $[1^4]_\Sp$.
Let $S':=\{(a_1,a_2),(-a_2,a_1)\}\cup \{(a_i,b_i),(-b_i,a_i)\}_{i=3}^5$. 
A similar computation shows that $X_{S'}\in \Ker\widetilde{\Tr}_1$,
and 
\[
(\pi\circ\widetilde{\Tr}_2)(X_{S'})
= 256(16q+3r+3u-2t)\sum_{i=3}^5 a_1\wedge a_2\wedge a_i\wedge b_i.
\]
Let $C'\colon H^{\otimes 4}\to H^{\otimes2}$ be the contraction map defined by
\[
C'(x\otimes y\otimes  z\otimes w)=(x\cdot y)z\otimes w.
\]
When $\pi = \pi_{1,-1,0,-1,0,1}$,
$(C'\circ \pi\circ\widetilde{\Tr}_2)(X_{S'})$ is a nonzero constant multiple of a maximal vector $a_1\wedge a_2$ in $[1^2]_\Sp$.
In the same way, for $S'':=\{(a_1,a_2),(-a_2,a_1)\}\cup\{(a_3,a_4),(-a_4,a_3)\}\cup\{(a_i,b_i),(-b_i,a_i)\}_{i=5}^6$,
we see that $X_{S''}\in \Ker\widetilde{\Tr}_1$, and that the value $(\pi_{1,-1,0,-1,0,1}\circ\widetilde{\Tr}_2)(X_{S''})$ is a nonzero constant multiple of a maximal vector $a_1\wedge a_2\wedge a_3\wedge a_4$ in $[1^4]_\Sp$.
\end{proof}

\section{Comparison of two $2$-loop traces} \label{sec:Omega1_Omega_2}

In this section, we review another trace map $\Tr^C$ defined in \cite{Con15} and compare its $2$-loop part with $\widetilde{\Tr}_2$.

\subsection{Another $r$-loop space} \label{subsec:another_r}

Let $\Omega$ be the quotient space of $C_1 \H$ by the following three relations: 
\[ 
\begin{tikzpicture}[scale=0.4, baseline={(0,-0.1)}]
 \draw ([shift={(0,0)}]-150:2) arc [radius=2, start angle = -150, end angle=150];
  \draw [red, very thick, dotted, -<-={.5}{red}]([shift={(0,0)}]140:2) arc [radius=2, start angle = 140, end angle=220];
  \draw (2,0)--(5,0);
\end{tikzpicture}
=0; \tag{C1}
\]
\[
\begin{tikzpicture}[scale=0.5, baseline={(0,3)}]
 \draw ([shift={(0,0)}]60:6) arc [radius=6, start angle = 60, end angle=70];
 \draw [red, very thick, dotted, -<-={.5}{red}]([shift={(0,0)}]70:6) arc [radius=6, start angle = 70, end angle=90];
\draw ([shift={(0,0)}]90:6) arc [radius=6, start angle = 90, end angle=110];
\draw ([shift={(0,0)}]110:6) arc [radius=6, start angle = 110, end angle=120];
\draw (100:6)-- ++(0,1.5)[above=-1mm] node{$v$};
\end{tikzpicture}
=
\begin{tikzpicture}[scale=0.5, baseline={(0,3)}]
 \draw ([shift={(0,0)}]60:6) arc [radius=6, start angle = 60, end angle=70];
 \draw ([shift={(0,0)}]70:6) arc [radius=6, start angle = 70, end angle=90];
\draw [red, very thick, dotted, -<-={.5}{red}]([shift={(0,0)}]90:6) arc [radius=6, start angle = 90, end angle=110];
\draw ([shift={(0,0)}]110:6) arc [radius=6, start angle = 110, end angle=120];
\draw (80:6)-- ++(0,1.5)[above=-1mm] node{$v$};
\end{tikzpicture}
\qquad \text{for any $v \in H$};
\tag{C2}
\]
\vspace{0.5em}
\[
\begin{tikzpicture}[scale=0.3, baseline={(0,-1)}]
\draw (0,0)--(2,-2);
\draw (0,0)--(0,-2);
\draw (0,0)--(-2,-2);
\draw (2,-2)--(4,-4);
\draw [red, very thick, dotted, ->-={.5}{red}] (0,-2)--(0,-4);
\draw [red, very thick, dotted, ->-={.5}{red}] (-2,-2)--(-4,-4);
\end{tikzpicture}
+
\begin{tikzpicture}[scale=0.3, baseline={(0,-1)}]
\draw (0,0)--(2,-2);
\draw (0,0)--(0,-2);
\draw (0,0)--(-2,-2);
\draw [red, very thick, dotted, ->-={.5}{red}] (2,-2)--(4,-4);
\draw (0,-2)--(0,-4);
\draw [red, very thick, dotted, ->-={.5}{red}] (-2,-2)--(-4,-4);
\end{tikzpicture}
+
\begin{tikzpicture}[scale=0.3, baseline={(0,-1)}]
\draw (0,0)--(2,-2);
\draw (0,0)--(0,-2);
\draw (0,0)--(-2,-2);
\draw [red, very thick, dotted, ->-={.5}{red}] (2,-2)--(4,-4);
\draw [red, very thick, dotted, ->-={.5}{red}] (0,-2)--(0,-4);
\draw (-2,-2)--(-4,-4);
\end{tikzpicture}
=0;
\tag{C3}
\]
\vspace{0.5em}

\begin{remark}\label{rem:moveC2'}
Applying the IHX relation and (C2) several times,
we also have a relation in $\Omega$ which moves a rooted $H$-colored tree-shaped Jacobi diagram $T$ as follows.
\[
\begin{tikzpicture}[scale=0.5, baseline={(0,3)}]
 \draw ([shift={(0,0)}]60:6) arc [radius=6, start angle = 60, end angle=70];
 \draw [red, very thick, dotted, -<-={.5}{red}]([shift={(0,0)}]70:6) arc [radius=6, start angle = 70, end angle=90];
\draw ([shift={(0,0)}]90:6) arc [radius=6, start angle = 90, end angle=110];
\draw ([shift={(0,0)}]110:6) arc [radius=6, start angle = 110, end angle=120];
\draw (100:6)-- ++(0,1.5)[above=-1mm] node{$T$};
\end{tikzpicture}
=
\begin{tikzpicture}[scale=0.5, baseline={(0,3)}]
 \draw ([shift={(0,0)}]60:6) arc [radius=6, start angle = 60, end angle=70];
 \draw ([shift={(0,0)}]70:6) arc [radius=6, start angle = 70, end angle=90];
\draw [red, very thick, dotted, -<-={.5}{red}]([shift={(0,0)}]90:6) arc [radius=6, start angle = 90, end angle=110];
\draw ([shift={(0,0)}]110:6) arc [radius=6, start angle = 110, end angle=120];
\draw (80:6)-- ++(0,1.5)[above=-1mm] node{$T$};
\end{tikzpicture}.
\]
\end{remark}

Then the space $\Omega$ decomposes as 
\[
\Omega = \bigoplus_{n \ge 0} \Omega(n) = 
\bigoplus_{n \ge 0} \bigoplus_{r \ge 0} \Omega_r(n),
\]
where $n$ and $r$ stand for the Lie degree and the number of dotted edges, respectively.
We also use the notation $\Omega_{r,m}= \Omega_r(m+2r-2)$ and $\Omega_{r}=\bigoplus_{m=0}^{\infty} \Omega_{r,m}$, where $m$ denotes the homological degree.
It was shown by Conant that the Conant-Kassabov-Vogtmann trace induces the map $\Tr^{\rm C}: \h \to \Omega$ which vanishes on $\m$.

\begin{proposition}\label{prop:refine}
There is a natural map $\widetilde{\Omega} \to \Omega$ such that the following diagram commutes. 
\[
\xymatrix@R=0.5em@C=4em{
 & \widetilde{\Omega} \ar[dd] \\
\h \ar[ur]^{\widetilde{\Tr}} \ar[dr]_{\Tr^{\rm C}} & \\
 & \Omega
}
\]
\end{proposition}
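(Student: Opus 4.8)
The plan is to realize both $\widetilde{\Omega}$ and $\Omega$ as quotients of the same space $C_1\H$ and to obtain the desired map as the map induced on quotients by the identity of $C_1\H$. Recall that $\widetilde{\Tr}$ and $\Tr^{\rm C}$ are both defined as $\Tr\colon\h\to C_1\H$ followed by the respective projections $p_{\widetilde{\Omega}}\colon C_1\H\to\widetilde{\Omega}$ and $p_{\Omega}\colon C_1\H\to\Omega$. Write $\widetilde{N}=\bigoplus_{n}\beta^{n-1}(C_n\H^{\ord}(n))\subset C_1\H$ for the kernel of $p_{\widetilde{\Omega}}$ and $N\subset C_1\H$ for the subspace generated by the relations (C1), (C2), (C3), which is the kernel of $p_{\Omega}$. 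It then suffices to prove the inclusion $\widetilde{N}\subseteq N$. Indeed, once this is established, $p_{\Omega}$ factors uniquely through $p_{\widetilde{\Omega}}$, producing a natural map $q\colon\widetilde{\Omega}\to\Omega$ with $q\circ p_{\widetilde{\Omega}}=p_{\Omega}$, and the triangle commutes formally since $q\circ\widetilde{\Tr}=q\circ p_{\widetilde{\Omega}}\circ\Tr=p_{\Omega}\circ\Tr=\Tr^{\rm C}$. Since $\beta^{n-1}$ sends $C_{n,r}\H^{\ord}(n)$ to $C_{1,r-n+1}\H(n)$ and the relations (C1)--(C3) are homogeneous in the number of dotted edges, $\widetilde{N}$ and $N$ are graded by loop order, so $q$ restricts to maps $\widetilde{\Omega}_r\to\Omega_r$; this is the compatibility needed for Proposition~\ref{prop:facthr_short}.

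Thus the entire content of the proposition is the assertion that $\beta^{n-1}(X)$ maps to $0$ in $\Omega$ for every $X\in C_n\H^{\ord}(n)$. First I would reduce, exactly as in the discussion preceding the proof of Theorem~\ref{thm:omega2}, to the case $\beta^{n-1}(X)\neq0$ and organize the terms of $\beta^{n-1}(X)$ according to the self-loop/backward-edge counts $N(Y_i)$ of the tripods $Y_1,\dots,Y_n$. This is the same bookkeeping used in Step~1 of the proof of Theorem~\ref{thm:omega2}; it presents $\beta^{n-1}(X)$ as a signed sum of hairy Lie graphs that differ only in the placement of one or two dotted edges around the trivalent vertices created by the merges. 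The task is then to verify that each such local pattern is annihilated by (C1), (C2), (C3), rather than merely by the coarser relation defining $\widetilde{\Omega}$.

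Concretely I would treat three model situations. When a single tripod carries three edges that are simultaneously converted, $\beta^{n-1}(X)$ is a three-term sum attached to one trivalent vertex; after using (C2) and Remark~\ref{rem:moveC2'} to slide all hairs and rooted subtrees into standard position, and the AS/IHX relations to normalize the vertex, this is precisely an instance of (C3) and so lies in $N$. When the merging produces a self-loop on some tripod, the resulting tadpole is killed by (C1). In the remaining cases two distinct tripods each contribute one extra dotted edge; after sliding, these reduce to $\Q$-linear combinations of instances of (C3), via the general-loop-order analogues of Lemmas~\ref{lem:elementsR} and~\ref{lem:elementsR2} (whose roles are now played directly by (C1)--(C3)). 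In particular, specializing to two surviving dotted edges recovers the fact that the generating relation (GC3) of $R$ from Theorem~\ref{thm:omega2} is itself a consequence of (C1), (C2), (C3), the decorations $x,v,w$ in (GC3) being absorbed by repeated use of (C2) and Remark~\ref{rem:moveC2'}.

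The main obstacle is the general-loop-order bookkeeping of the previous paragraph: unlike the $2$-loop situation of Theorem~\ref{thm:omega2}, a general $X$ may yield, after $\beta^{n-1}$, long chains of merged tripods carrying several surviving dotted edges, and one must verify that the full signed sum collapses under (C1)--(C3). I expect this to follow by induction on the number $n$ of tripods, peeling off the first merge and transporting the newly created solid edge past the remaining dotted edges by the sliding relation of Remark~\ref{rem:moveC2'}; the $2$-loop analysis of Theorem~\ref{thm:omega2} serves as the prototype, and the three model situations above are exactly the configurations that arise at each inductive step.
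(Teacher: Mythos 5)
Your overall framework is the same as the paper's: both traces factor through $\Tr\colon\h\to C_1\H$, so everything reduces to showing that $\beta^{n-1}(C_n\H^{\ord}(n))$ maps to zero in $\Omega$, and your bookkeeping via the counts $N(Y_i)$ is exactly the paper's. But the decisive step is missing rather than proved. You yourself flag the ``general-loop-order bookkeeping'' as the main obstacle and propose (a) general-$r$ analogues of Lemmas~\ref{lem:elementsR} and~\ref{lem:elementsR2} and (b) an induction on $n$ peeling off the \emph{first} merge; neither is carried out, and the induction as sketched does not obviously close: after the first merge the first tree is no longer a tripod, so an inductive hypothesis phrased for unions of tripods does not apply, and you would need a strengthened statement about hairy Lie graphs with an arbitrary first tree. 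The paper avoids all of this with one localization you do not make: look at the \emph{last} tripod $Y_i$ with $N(Y_i)\ge1$. All tripods after $Y_i$ have exactly one backward edge and no self-loop, so every merge after step $i-1$ is forced; hence, term by term in $\beta^{i-2}(X)$, the sub-sum over the choice of which edge of $Y_i$ to solidify is \emph{verbatim} a single relation instance: (C1) if $Y_i$ carries a self-loop (including the case $i=1$), (C3) if it has three backward edges, and (C2) --- or its rooted-tree version, Remark~\ref{rem:moveC2'}, when the third leg of $Y_i$ is a forward dotted edge whose far side becomes a rooted subtree after the remaining forced merges --- if it has two backward edges. No induction, and no general analogues of the $2$-loop lemmas, are needed.

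A secondary inaccuracy: your model situation of two tripods each contributing one extra dotted edge is misattributed to (C3). In $\Omega$ the relevant cancellation at each such tripod is the two-term sliding relation (C2)/Remark~\ref{rem:moveC2'}, applied separately at the last heavy tripod; the four-term GC3-type combinations of Lemma~\ref{lem:elementsR} were forced on the paper only in the $\widetilde{\Omega}_2$ setting, where (C2) is unavailable. Importing that heavier machinery here is both unnecessary and the source of the unproved ``general-loop-order analogues'' your argument leans on.
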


\begin{proof}
As in the proof of Theorem~\ref{thm:omega2}, let $X\in C_n\H^{\ord}(n)$ be a hairy Lie graph such that $\beta^{n-1}(X)\ne 0$ in $C_1 \H(n)$. 
We show that $\beta^{n-1}(X)$ vanishes when it is sent to $\Omega(n)$.
Then, we have a natural surjective homomorphism $\widetilde{\Omega}(n)= C_1 \H(n)/ \beta^{n-1}(C_n \H^{\ord}(n))\to\Omega(n)$ which makes the diagram commute.

We use the terms introduced in the proof of Theorem~\ref{thm:omega2}.
Let $Y_i$ be the last tripod in $X$ satisfying $N(Y_i)\ge1$.
When $i=1$, $Y_1$ has a self-loop edge and the other tripods have no extra dotted edges.
Hence, $\beta^{n-1}(X)$ is of the form in the left hand side of (C1), and it vanishes in $\Omega(n)$.
Hence, $\beta^{n-1}(X)$ vanishes when it is sent to $\Omega(n)$.
When $i\ge 2$, there are three cases concerning the extra dotted edges of $Y_i$:
\begin{enumerate}
\item[(i)] $N(Y_i)=2$ and $Y_i$ has no self-loop edge,
\item[(ii)] $N(Y_i)=1$ and $Y_i$ has one self-loop edge,
\item[(iii)] $N(Y_i)=1$ and $Y_i$ has no self-loop edge.
\end{enumerate}
In each case, we analyze the result of applying $\beta$ to $\beta^{i-2}(X)$. 

In the case (i), the three backward edges of $Y_i$ are connected to the preceding tripods.
If we apply $\beta$ to each term of $\beta^{i-2}(X)$,
the result is of the form in the left hand side of (C3).
In the case (ii), $\beta^{i-2}(X)$ consists of hairy Lie graphs with one self-loop edge, and all terms in $\beta^{i-1}(X)$ are of the form in the left hand side of (C1).
In the case (iii), two univalent vertices of $Y_i$ are connected to the preceding tripods by the backward edges of $Y_i$.
Assume that the remaining univalent vertex of $Y_i$ is not incident to a dotted edge, i.e., it is colored with an element of $H$.
Applying $\beta$ to each hairy Lie graph in $\beta^{i-2}(X)$, we obtain an element of the form of the difference between the two sides of (C2). 
Finally, in the case (iii), assume that the remaining univalent vertex of $Y_i$ is incident to a dotted edge, which is connected to one of the succeeding tripods.
Since $Y_i$ is the last tripod which has extra dotted edges,
this dotted edge turns into a solid edge in $\beta^{n-1}(X)$.
The complement of the solid edge consists of two connected components,
and the component containing the succeeding tripods is a rooted $H$-colored tree-shaped Jacobi diagram.
By Remark~\ref{rem:moveC2'}, $\beta^{n-1}(X)$ vanishes in $\Omega(n)$. 
\end{proof}

The $1$-loop parts $\Omega_1$ and $\widetilde{\Omega}_1$ are the same and determined by the following proposition.

\begin{proposition}[{\cite[Theorem 4.1]{Con15}, \cite[Theorem 1.1]{Con16}}] \label{prop:Omega1}
\[
\Omega_{1,m} \cong \widetilde{\Omega}_{1,m} \cong H^{\otimes m}/D_{2m}.
\]
\end{proposition}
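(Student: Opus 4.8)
The plan is to identify both $\widetilde{\Omega}_{1,m}$ and $\Omega_{1,m}$ with the dihedral coinvariants $H^{\otimes m}/D_{2m}$ by exhibiting an explicit pair of mutually inverse maps, deducing the isomorphism $\Omega_{1,m}\cong\widetilde{\Omega}_{1,m}$ along the way. Since the $1$-loop part involves a single dotted edge, the relation (C3) (which requires two dotted edges) is vacuous here, so $\Omega_{1,m}$ is the quotient of $C_{1,1}\H(m)$ by (C1) and (C2) alone. Exactly as in Steps~1 and 2 of the proof of Theorem~\ref{thm:omega2}, one checks that the subspace $\beta^{m-1}(\Y^{\ord}_{m,m})$ defining $\widetilde{\Omega}_{1,m}$ coincides with the subspace of $C_{1,1}\H(m)$ generated by (C1) and (C2); the map of Proposition~\ref{prop:refine} then restricts to an isomorphism $\widetilde{\Omega}_{1,m}\xrightarrow{\sim}\Omega_{1,m}$, recovering the identification $\Omega_1=\widetilde{\Omega}_1$. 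It therefore suffices to treat $\widetilde{\Omega}_{1,m}$.

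First I would define $\phi\colon H^{\otimes m}\to\widetilde{\Omega}_{1,m}$ by sending $v_1\otimes\cdots\otimes v_m$ to the \emph{necklace} graph $N(v_1,\ldots,v_m)$: an $m$-gon with one dotted edge and $m-1$ solid edges, carrying hairs colored $v_1,\ldots,v_m$ read cyclically starting just after the dotted edge. The first task is to show that $\phi$ factors through the dihedral coinvariants. Cyclic rotation by one step is precisely the move that slides a hair across the dotted edge, namely relation (C2) combined with reversing the orientation of the dotted edge (relation (iv), which supplies the expected sign); reflection of the necklace is implemented by the AS relations at the trivalent vertices, producing the dihedral reflection sign. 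Hence $\phi$ descends to $\overline{\phi}\colon H^{\otimes m}/D_{2m}\to\widetilde{\Omega}_{1,m}$.

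Next I would prove that $\overline{\phi}$ is surjective. A general $1$-loop graph is a tree-shaped Jacobi diagram with a dotted edge joining two of its leaves; its unique cycle consists of this dotted edge together with the path between its endpoints, and rooted subtrees hang off the trivalent vertices on the cycle. Inducting on the number of trivalent vertices lying off the cycle, I would use the IHX relation to comb each hanging subtree onto the cycle and Remark~\ref{rem:moveC2'} to slide whole subtrees past the dotted edge when necessary, until every trivalent vertex lies on the cycle and carries a single hair, i.e. until the diagram is a necklace. This shows that every element of $\widetilde{\Omega}_{1,m}$ is a linear combination of necklaces.

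Finally, and this is where the real work lies, I would construct the inverse $\psi$. Define $\psi$ on $C_{1,1}\H(m)$ by reading around the loop: each subtree hanging off the cycle is a rooted binary tree and hence determines a Lie monomial, an element of the free Lie algebra $\mathcal{L}\subset\mathcal{T}$; concatenating these Lie words in cyclic order and projecting into $H^{\otimes m}/D_{2m}$ gives $\psi$. The main obstacle is verifying that $\psi$ respects every relation. The IHX and AS relations internal to a subtree are exactly the Jacobi and antisymmetry relations of $\mathcal{L}$, so each leg is well-defined in $H^{\otimes\deg}$; the delicate point is an IHX triple one of whose edges lies on the cycle, which I would match with the identity $[a,b]=ab-ba$ inside the cyclic word, together with the case where the pivot edge is the dotted edge, handled by (C2) and the built-in cyclic invariance, while relation (C1) is absorbed by the reflection sign in $D_2$. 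Granting this, $\psi$ descends to $\widetilde{\Omega}_{1,m}\to H^{\otimes m}/D_{2m}$, and since $N(v_1,\ldots,v_m)$ reads back to the cyclic word $v_1\otimes\cdots\otimes v_m$ we obtain $\psi\circ\overline{\phi}=\mathrm{id}$. Combined with the surjectivity of $\overline{\phi}$ this yields the isomorphism $\widetilde{\Omega}_{1,m}\cong H^{\otimes m}/D_{2m}$, and transporting it through Proposition~\ref{prop:refine} gives $\Omega_{1,m}\cong\widetilde{\Omega}_{1,m}$ as well.
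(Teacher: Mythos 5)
The paper offers no internal proof of this proposition: it is quoted from Conant (\cite[Theorem~4.1]{Con15} and \cite[Theorem~1.1]{Con16}), so the comparison here is with Conant's argument, which your sketch does follow in outline --- normal form by necklaces, cyclic rotation via hair-sliding, reflection via AS plus dotted-edge reversal, and an inverse built from cyclic words of Lie monomials. The logical skeleton (surjectivity of $\overline{\phi}$ plus a left inverse $\psi$) is sound. But three points need repair. First, your opening claim that (C3) is ``vacuous'' at $r=1$ is not literally true: with a single dotted edge, the two dotted half-edges at the tripod in (C3) can be the two ends of that one dotted edge, producing genuine $r=1$ instances; each of the three resulting terms is then a cycle attached to the rest of the graph by one edge, hence already zero by (C1), so the conclusion ($\Omega_{1,m}$ is cut out by (C1) and (C2) alone) survives, but the reason you give is wrong. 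Second, the assertion that $\beta^{m-1}(\Y^{\ord}_{m,m})$ equals the span of the (C1) elements and (C2) differences is \emph{not} obtained ``exactly as in Steps~1 and~2'' of Theorem~\ref{thm:omega2}: those steps concern $C_{n,n+1}\H^{\ord}(n)$ and the two-loop relation (GC3). The inclusion $\subseteq$ at $r=1$ is instead the case analysis in the proof of Proposition~\ref{prop:refine} (extra self-loop gives a (C1) configuration; two backward edges on one tripod give precisely the difference of the two sides of (C2)); the reverse inclusion $\supseteq$ requires explicit tripod constructions in the style of Step~2 --- exhibiting each (C1) graph and each (C2) difference as a value of $\beta^{m-1}$ --- and you supply none of them. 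This equality is exactly what makes $\widetilde{\Omega}_{1,m}\to\Omega_{1,m}$ an isomorphism, so it cannot be waved through.

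Third, and most substantially, the well-definedness of $\psi$ is the actual content of Conant's theorem, and you only flag it (``granting this''). The delicate verification --- that an IHX relation whose pivot edge lies \emph{on} the cycle is compatible with reading the hanging subtrees as Lie monomials and expanding brackets as $ab-ba$ inside the cyclic word, and that (C1) maps to zero (for a hanging tree of degree $\ge 2$ this is cyclic invariance applied to $\mathrm{word}([A,B])=\mathrm{word}(A)\mathrm{word}(B)-\mathrm{word}(B)\mathrm{word}(A)$, while the degree-one case needs the reflection sign) --- is where the theorem lives, and a proof that defers it has deferred the theorem. There is also a small sign slip: cyclic rotation of the necklace is implemented by (C2) \emph{alone}, with no reversal of the dotted edge and no sign; the orientation-reversal sign from relation (iv) enters only in the reflection, where it combines with $m$ applications of AS to give the total sign $(-1)^{m+1}$. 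As written, your parenthetical would impose a signed cyclic action on $H^{\otimes m}$, under which $\phi$ would fail to factor through the intended dihedral coinvariants.
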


Under this isomorphism, the pure tensor $x_1\otimes \cdots \otimes x_m$ corresponds to the following hairy Lie graph:
\[
\begin{tikzpicture}[scale=0.25, baseline={(0,0.1)}]
 \draw ([shift={(0,0)}]100:3) arc [radius=3, start angle = 100, end angle=-230];
 \draw [red, very thick, dotted, -<-={.5}{red}] ([shift={(0,0)}]100:3) arc [radius=3, start angle = 100, end angle=130];
 \draw (90:3)--(90:4.5) node[above=-3pt]{$x_1$};
 \draw (35:3)--(35:4.5) node[above right=-3pt]{$x_2$};
  \draw[loosely dotted, very thick] (25:4) arc (25:0:4);
  \draw[loosely dotted, very thick] (150:4) arc (150:175:4);
 \draw (-10:3)--(-10:4.5) node[right=-3pt]{$x_i$};
 \draw (-60:3)--(-60:4.5) node[below right=-3pt]{$x_{i+1}$};
 \draw[loosely dotted, very thick] (-80:4) arc (-80:-105:4);
 \draw (240:3)--(240:4.5) node[below left=-3pt]{$x_{j-1}$};
 \draw (190:3)--(190:4.5) node[left=-3pt]{$x_j$};
 \draw (140:3)--(140:4.5) node[above left=-3pt] {$x_m$};
\end{tikzpicture}
\]

\subsection{Factorization of $\Tr^C_2$}

The map $\Tr^C$ decomposes into components:
$\Tr^C = \bigoplus_{r=1}^{\infty} \Tr_r^C: \h \to \Omega_r$. 
We show that $\Tr^{\rm C}_2$ factors through $\Tr^C_1$.

Let $\Phi\colon \Omega_1 \to \Omega_2$ be a homomorphism defined by 
\[
\begin{tikzpicture}[scale=0.25, baseline={(0,0.1)}]
 \draw ([shift={(0,0)}]100:3) arc [radius=3, start angle = 100, end angle=-230];
 \draw [red, very thick, dotted, -<-={.5}{red}] ([shift={(0,0)}]100:3) arc [radius=3, start angle = 100, end angle=130];
 \draw (90:3)--(90:4.5) node[above=-3pt]{$x_1$};
 \draw (35:3)--(35:4.5) node[above right=-3pt]{$x_2$};
  \draw[loosely dotted, very thick] (25:4) arc (25:0:4);
  \draw[loosely dotted, very thick] (150:4) arc (150:175:4);
 \draw (-10:3)--(-10:4.5) node[right=-3pt]{$x_i$};
 \draw (-60:3)--(-60:4.5) node[below right=-3pt]{$x_{i+1}$};
 \draw[loosely dotted, very thick] (-80:4) arc (-80:-105:4);
 \draw (240:3)--(240:4.5) node[below left=-3pt]{$x_{j-1}$};
 \draw (190:3)--(190:4.5) node[left=-3pt]{$x_j$};
 \draw (140:3)--(140:4.5) node[above left=-3pt] {$x_n$};
\end{tikzpicture}
\mapsto
\sum_{i<j}
(x_i\cdot x_j)
\left\{
\begin{tikzpicture}[scale=0.25, baseline={(0,0)}]
 \draw ([shift={(0,0)}]100:3) arc [radius=3, start angle = 100, end angle=-230];
 \draw [red, very thick, dotted, -<-={.5}{red}] ([shift={(0,0)}]100:3) arc [radius=3, start angle = 100, end angle=130];
 \draw (90:3)--(90:4.5) node[above=-3pt]{$x_1$};
 \draw (35:3)--(35:4.5) node[above right=-3pt]{$x_2$};
  \draw[loosely dotted, very thick] (25:4) arc (25:0:4);
  \draw[loosely dotted, very thick] (150:4) arc (150:175:4);
 \draw (-10:3)-- node(A){} ++ (-1.5,0) ;
 \draw (-60:3)--(-60:4.5) node[below right=-3pt]{$x_{i+1}$};
 \draw[loosely dotted, very thick] (-80:4) arc (-80:-105:4);
 \draw (240:3)--(240:4.5) node[below left=-3pt]{$x_{j-1}$};
 \draw (190:3)-- node(B){} ++ (1.5,0) ;
\draw [red, very thick, dotted, ->-={.5}{red}] (A)--(B);
 \draw (140:3)--(140:4.5) node[above left=-3pt]{$x_n$};
\end{tikzpicture}
+
\begin{tikzpicture}[scale=0.25, baseline={(0,0)}]
 \draw ([shift={(0,0)}]-20:3) arc [radius=3, start angle = -20, end angle=310];
  \draw [red, very thick, dotted, -<-={.5}{red}] ([shift={(0,0)}]310:3) arc [radius=3, start angle = 310, end angle=340];
 \draw (90:3)--(90:4.5) node[above=-3pt]{$x_1$};
 \draw (35:3)--(35:4.5) node[above right=-3pt]{$x_2$};
  \draw[loosely dotted, very thick] (25:4) arc (25:0:4);
  \draw[loosely dotted, very thick] (150:4) arc (150:175:4);
 \draw (-10:3)-- node(A){} ++ (-1.5,0) ;
 \draw (-60:3)--(-60:4.5) node[below right=-3pt]{$x_{i+1}$};
 \draw[loosely dotted, very thick] (-80:4) arc (-80:-105:4);
 \draw (240:3)--(240:4.5) node[below left=-3pt]{$x_{j-1}$};
 \draw (190:3)-- node(B){} ++ (1.5,0) ;
\draw [red, very thick, dotted, ->-={.5}{red}] (A)--(B);
 \draw (140:3)--(140:4.5) node[above left=-3pt]{$x_n$};
\end{tikzpicture}
\right\}.
\] 

\begin{remark} \label{rem:aboutPhi}
The definition of the map $\Phi$ makes use of 
the description of $\Omega_1$ in Proposition~\ref{prop:Omega1}.
The first term is obtained by just adding a dotted edge between the $i$th and $j$th leaves.
If the core of the hairy Lie graph thus obtained is a theta graph, then the second term is obtained from the first term by sliding the initial dotted edge across the newly added edges.
If the core is a dumbbell graph, there will be more than one term.
For example, let $T_0, T_1, T_2$ be rooted $H$-colored tree-shaped Jacobi diagrams such that the coloring of any of their leaves has trivial intersections with $a_1$, $b_1$ and the colorings of any other leaves.
Then, if 
\[
X = \begin{tikzpicture}[baseline=0pt, x=1.4mm, y=1.4mm]
\draw (0,-3) -- (0,3);
\draw (0,3) to[bend right=30] (-1,4) -- (-7,4) to[bend right=30] (-8,3) -- (-8,2);
\draw (0,-3) to[bend left=30] (-1,-4) -- (-7,-4) to[bend left=30] (-8,-3) -- (-8,-2);
\draw[red, very thick, dotted, ->-] (-8,-2) -- (-8,2);
\draw (0,0) -- (5,0);
\draw (5,0) -- (7,3) -- (10,3) node[right=-3pt]{$a_1$};
\draw (7,3) -- (7,5) node[above=-3pt]{$T_1$};
\draw (5,0) -- (7,-3) -- (10,-3) node[right=-3pt]{$b_1$};
\draw (7,-3) -- (7,-5) node[below=-3pt]{$T_2$};
\draw (-4,4) -- (-4,6) node[above=-3pt]{$T_0$};
\end{tikzpicture},
\]
then we have
\[
\Phi(X) = \, 
\begin{tikzpicture}[baseline=0pt, x=1.4mm, y=1.4mm]
\draw (0,-3) -- (0,3);
\draw (0,3) to[bend right=30] (-1,4) -- (-7,4) to[bend right=30] (-8,3) -- (-8,2);
\draw (0,-3) to[bend left=30] (-1,-4) -- (-7,-4) to[bend left=30] (-8,-3) -- (-8,-2);
\draw[red, very thick, dotted, ->-] (-8,-2) -- (-8,2);
\draw (0,0) -- (5,0);
\draw (5,0) -- (7,3) -- (10,3);
\draw (7,3) -- (7,5) node[above=-3pt]{$T_1$};
\draw (5,0) -- (7,-3) -- (10,-3);
\draw (7,-3) -- (7,-5) node[below=-3pt]{$T_2$};
\draw (-4,4) -- (-4,6) node[above=-3pt]{$T_0$};
\draw[red, very thick, dotted, ->-] (10,3) to[bend left=80] (10,-3); 
\end{tikzpicture} 
\ + \ 
\begin{tikzpicture}[baseline=0pt, x=1.4mm, y=1.4mm]
\draw[red, very thick, dotted, -<-] (0,-2) -- (0,2);
\draw (0,2) -- (0,3) to[bend right=30] (-1,4) -- (-7,4) to[bend right=30] (-8,3) -- (-8,2);
\draw (0,-2) -- (0,-3) to[bend left=30] (-1,-4) -- (-7,-4) to[bend left=30] (-8,-3) -- (-8,-2);
\draw (-8,-2) -- (-8,2);
\draw (0,3) -- (5,3);
\draw (2,3) -- (2,5) node[above=-3pt]{$T_1$};
\draw (0,-3) -- (5,-3);
\draw (2,-3) -- (2,-5) node[below=-3pt]{$T_2$};
\draw (-4,4) -- (-4,6) node[above=-3pt]{$T_0$};
\draw[red, very thick, dotted, ->-] (5,3) to[bend left=80] (5,-3); 
\end{tikzpicture}
\ - \ 
\begin{tikzpicture}[baseline=0pt, x=1.4mm, y=1.4mm]
\draw[red, very thick, dotted, -<-] (0,-2) -- (0,2);
\draw (0,2) -- (0,3) to[bend right=30] (-1,4) -- (-7,4) to[bend right=30] (-8,3) -- (-8,2);
\draw (0,-2) -- (0,-3) to[bend left=30] (-1,-4) -- (-7,-4) to[bend left=30] (-8,-3) -- (-8,-2);
\draw (-8,-2) -- (-8,2);
\draw (0,3) -- (5,3);
\draw (2,3) -- (2,5) node[above=-3pt]{$T_2$};
\draw (0,-3) -- (5,-3);
\draw (2,-3) -- (2,-5) node[below=-3pt]{$T_1$};
\draw (-4,4) -- (-4,6) node[above=-3pt]{$T_0$};
\draw[red, very thick, dotted, -<-] (5,3) to[bend left=80] (5,-3); 
\end{tikzpicture} \ . 
\]
\end{remark}

\begin{proposition}\label{prop:factorthrough}
\[
\Phi\circ \Tr^C_1=3\, \Tr^C_2\colon \h \to \Omega_2.
\]
\end{proposition}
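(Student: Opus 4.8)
The plan is to verify the identity $\Q$-linearly on the generators of $\h$, that is, on tree-shaped Jacobi diagrams $X\in\h(n)=C_{1,0}\H(n)$. Write the $n+2$ leaves of such an $X$ as a set $L$ with colourings $(x_\ell)_{\ell\in L}$, and for a two-element subset $P=\{p_1,p_2\}\subset L$ put $c_P=(x_{p_1}\cdot x_{p_2})$. Since $\Tr_r$ adds $r$ dotted edges in all unordered ways, I first record a chain-level identity in $C_1\H$: letting $\delta$ denote the operation that adjoins a single dotted edge over all unordered pairs of unpaired leaves, each weighted by the symplectic pairing (so that $\delta=\Tr_1$ on trees), one has $\delta\circ\Tr_1=2\,\Tr_2$. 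Indeed $\delta\circ\Tr_1$ runs over \emph{ordered} pairs $(P,Q)$ of disjoint leaf-pairs while $\Tr_2$ runs over \emph{unordered} ones, and the two-loop graph $G_{P,Q}$ obtained by adjoining dotted edges on $P$ and on $Q$ satisfies $G_{P,Q}=G_{Q,P}$ (with the coefficient $c_Pc_Q$ making the orientations irrelevant, by relation~(iv) of $C_1\H$).

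Next I would unwind $\Phi\circ\Tr^{\rm C}_1(X)$. By definition $\Tr^{\rm C}_1(X)=\sum_{P}c_P\,W_P$ in $\Omega_1$, where $W_P$ is the one-loop wheel obtained by closing $P$, whose hairs are the leaves in $L\setminus P$. Applying $\Phi$ and using its description in Remark~\ref{rem:aboutPhi}, each ordered disjoint pair $(P,Q)$ contributes $c_Q$ times $(A_{P,Q}+B_{P,Q})$, where $A_{P,Q}$ is the graph got by literally adjoining the dotted edge on $Q$ (the ``first term'') and $B_{P,Q}$ is its companion in which the original dotted edge of $W_P$ is slid onto the other arc cut out by $Q$ (the ``second term''). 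Thus
\[
\Phi\circ\Tr^{\rm C}_1(X)=\sum_{(P,Q)}c_Pc_Q\,A_{P,Q}\ +\ \sum_{(P,Q)}c_Pc_Q\,B_{P,Q},
\]
the sums being over ordered disjoint pairs. Because $\Phi$ is a well-defined map on $\Omega_1$, its first term applied to the literal representative $W_P$ yields the literal two-loop graph $A_{P,Q}=G_{P,Q}$, so the first sum is the image of $\delta\circ\Tr_1(X)=2\,\Tr_2(X)$ and equals $2\,\Tr^{\rm C}_2(X)$.

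It therefore remains to prove $\sum_{(P,Q)}c_Pc_Q\,B_{P,Q}=\Tr^{\rm C}_2(X)$. The heart of the argument is the local identity in $\Omega_2$, valid for each unordered disjoint pair $\{P,Q\}$,
\[
B_{P,Q}+B_{Q,P}=G_{P,Q}.
\]
Granting it, pairing the two orderings gives $\sum_{(P,Q)}c_Pc_Q\,B_{P,Q}=\sum_{\{P,Q\}}c_Pc_Q\,G_{P,Q}=\Tr^{\rm C}_2(X)$, and adding the two contributions yields $2\,\Tr^{\rm C}_2(X)+\Tr^{\rm C}_2(X)=3\,\Tr^{\rm C}_2(X)$, as claimed. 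To establish the local identity I would work inside the theta (resp.\ dumbbell) core carved out by the two added dotted edges: $B_{P,Q}$ and $B_{Q,P}$ are the two theta graphs in which the closing dotted edge lies on the band \emph{not} naturally containing it, and the relations (C1), (C2), (C3) together with the IHX and AS relations (and the sign incurred by reversing a dotted edge) collapse their sum to the single standard graph $G_{P,Q}$. I would simultaneously dispose of the degenerate contributions: a pair whose two leaves share a trivalent vertex yields a tadpole that vanishes by (C1), and a pair whose core is a dumbbell produces the extra terms recorded in Remark~\ref{rem:aboutPhi}, which are matched using Remark~\ref{rem:moveC2'}.

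The main obstacle is precisely this local identity $B_{P,Q}+B_{Q,P}=G_{P,Q}$. It demands a careful placement of the two dotted edges among the three bands of the theta core, correct bookkeeping of the dotted-edge orientations (each reversal costing a sign), and a separate but parallel verification in the dumbbell case, where $\Phi$ outputs more than two terms and the move of a rooted subtree across a dotted edge from Remark~\ref{rem:moveC2'} must be invoked. Once the band-by-band counting $2+1=3$ is pinned down, the proposition follows.
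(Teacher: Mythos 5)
Your intended mechanism is the right one, and it is in fact the same mechanism as the paper's: the contribution of each pair of disjoint leaf-pairs splits into two ``literal'' terms plus the ``slid'' terms, and relation (C3) converts the slid terms into one more copy of the literal graph, giving $2+1=3$. Your local identity $B_{P,Q}+B_{Q,P}=G_{P,Q}$ is, in the theta case, exactly the application of (C3) that appears in the paper's proof. What you have dropped is the paper's reduction to formal colorings, and that omission creates a genuine gap rather than a mere simplification.

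The gap is your central expansion $\Phi\circ\Tr^{C}_1(X)=\sum_{(P,Q)}c_Pc_Q\,(A_{P,Q}+B_{P,Q})$. This presumes that for every ordered pair $(P,Q)$ the closure of $P$ and the newly added edge on $Q$ form a theta core, which is the only case in which $\Phi$ contributes exactly two terms. When the tree-paths joining the two leaves of $P$ and the two leaves of $Q$ are edge-disjoint, the core is a dumbbell, and by Remark~\ref{rem:aboutPhi} the map $\Phi$ then produces \emph{more} than two terms. Such pairs are unavoidable: already for a caterpillar of Lie degree $4$, closing the two leftmost hairs and, separately, the two rightmost hairs gives a dumbbell, and no choice of representative of $X$ makes all pairs simultaneously of theta type. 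So for these pairs your formula is false as written, the symbol $B_{P,Q}$ is not even defined, and the phrase ``matched using Remark~\ref{rem:moveC2'}'' is not an argument; one would need a general multi-term formula for $\Phi$ on dumbbell configurations and a verification, using the core change relation together with (C3) and correct dotted-edge orientations, that the extra terms coming from the two orderings $(P,Q)$ and $(Q,P)$ still sum to $G_{P,Q}$ in $\Omega_2$. The paper's Steps 1 and 3 (formal symbols $w_p$, with coefficients in the symmetric square of $\Lambda^2 W$, and evaluation maps back to $H$) exist precisely to eliminate this case: since the coefficient $(w_i\wedge w_j)\cdot(w_k\wedge w_l)$ isolates each unordered pair of pairs, one may, for that pair alone, rewrite $X$ as a caterpillar with $w_i,w_j$ at the two ends of the spine, which forces both cores to be thetas. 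With scalar coefficients $c_Pc_Q$ the contributions of different pairs cannot be decoupled, a single representative must serve all pairs at once, and your route therefore requires the dumbbell computation you have not supplied.
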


\begin{proof}
Let $X$ be an $H$-colored tree-shaped Jacobi diagram.
On the one hand, $\Tr^C_2(X)$ is a summation over all unordered ways of adding two dotted edges to $X$.
On the other hand, $\Tr^C_1(X)$ is a summation over all ways of adding one dotted edge to $X$. 
Applying $\Phi$ to each term of $\Tr^C_1(X)$ is a summation over all choices of a pair of its leaves.
Thus, $(\Phi \circ \Tr_1^C)(X)$ is a summation over all ordered pairs of disjoint 2-element subsets of the leaves of $X$, where the first pair is consumed by $\Tr^C_1$ and the second pair by $\Phi$.
We will show that, for each unordered pair $\{ S,S'\}$ of disjoint 2-element subsets of the leaves of $X$, three times its contribution to $\Tr^C_2(X)$ is equal to the sum of contributions to $(\Phi \circ \Tr^C_1)(X)$ from the two ordered pairs $(S,S')$ and $(S',S)$.
For this purpose, we view the colorings of the leaves of $X$ as indeterminates and proceed as follows.

{\em Step 1}.
Fix an integer $n \ge 2$.
Let $\{ w_p\}_{p=1}^{n+2}$ be a set of formal symbols and set $W = \bigoplus_{p=1}^{n+2} \Q w_p$. 
By a $w$-colored hairy Lie graph, we mean a hairy Lie graph whose leaves are colored with distinct elements of $\{ w_p \}_{p=1}^{n+2}$.  
All the hairy Lie graphs that we consider for the moment have one tree and Lie degree $n$. 
We introduce three spaces $\h^F(n)$, $\Omega^F_{1,n}$ and $\Omega^F_{2,n-2}$ as follows.
\begin{itemize}
    \item $\h^F(n)$ is the space spanned by $w$-colored hairy Lie graphs with no dotted edge, modulo the multi-linearity, IHX and AS relations.
    \item $\Omega^F_{1,n}$ is spanned by elements of the form $(w_i \wedge w_j) \otimes X$, where $X$ is a $w$-colored hairy Lie graph with one dotted edge such that its set of colorings is $\{ w_p \}_{p=1}^{n+2} \setminus \{ w_i, w_j\}$.
    Here, the first component of the expression is understood to be an element in $\Lambda^2 W$, and the second component is considered modulo the same relations as in $\Omega_{1,n}$: the multi-linearity, IHX and AS relations, as well as the relations (C1), (C2), and (C3).
    \item $\Omega^F_{2,n-2}$ is defined similarly to $\Omega^F_{1,n}$:
    it is spanned by elements of the form $\big((w_i \wedge w_j)\cdot (w_k \wedge w_l)\big) \otimes X$, where $X$ is a $w$-colored hairy Lie graph with two dotted edges such that its set of colorings is $\{ w_p \}_{p=1}^{n+2} \setminus \{ w_i, w_j, w_k, w_l\}$. 
    Here, the first component is understood as an element in the second symmetric power of $\Lambda^2 W$.
\end{itemize}
We can define the trace maps $\Tr^C_1: \h^F(n) \to \Omega^F_{1,n}$ and $\Tr^C_2: \h^F(n) \to \Omega^F_{2,n-2}$ and the map $\Phi: \Omega^F_{1,n} \to \Omega^F_{2,n-2}$ in exactly the same manner as their original versions, where one uses the wedge product $w_i \wedge w_j$ instead of the intersection pairing. 

{\em Step 2}.
We claim that 
\[
\Phi \circ \Tr^C_1 = 3\, \Tr^C_2: 
\h^F(n) \to \Omega^F_{2,n-2}.
\]

\begin{proof}[Proof of the claim]
For any unordered pair $\{ (w_i,w_j), (w_k,w_l) \}$ of disjoint 2-element subsets of $\{w_p\}_{p=1}^{n+2}$ and for any $w$-colored tree-shaped Jacobi diagram $X$, it is enough to
compare the terms in $\Tr^C_2(X)$ which are of the form $\big((w_i \wedge w_j) \cdot (w_k \wedge w_l)\big) \otimes Y$ for some $Y$ with those in $(\Phi\circ \Tr^C_1)(X)$.
Since $\h^F(n)$ is generated by elements of the form 
\[
\begin{tikzpicture}[scale=0.7, baseline={(0,0.5)}]
\draw (0,0) node[left] {$w_i$} -- (5,0) node[right] {$w_j$};
\draw (1,0)--(1,1);
\node at (1.6,0.8) {$\cdots$};
\draw (2,0)--(2,1) node[above=-2pt]{$w_k$};
\node at (2.6,0.8) {$\cdots$};
\draw (3,0) -- (3,1) node[above=-2pt]{$w_l$};
\node at (3.6,0.8) {$\cdots$};
\draw (4,0)--(4,1); 
\end{tikzpicture}
\quad \text{and} \quad 
\begin{tikzpicture}[scale=0.7, baseline={(0,0.5)}]
\draw (0,0) node[left] {$w_i$} -- (5,0) node[right] {$w_j$};
\draw (1,0)--(1,1);
\node at (1.6,0.8) {$\cdots$};
\draw (2,0)--(2,1) node[above=-2pt]{$w_l$};
\node at (2.6,0.8) {$\cdots$};
\draw (3,0) -- (3,1) node[above=-2pt]{$w_k$};
\node at (3.6,0.8) {$\cdots$};
\draw (4,0)--(4,1); 
\end{tikzpicture} \ , 
\]
we may assume that $X$ is one of these elements. 
Furthermore, by exchanging the role of $k$ and $l$, it is sufficient to consider the first one only.
In this case, $\Tr^C_1(X)$ is equal to 
\[
(w_i \wedge w_j) \otimes \, 
\begin{tikzpicture}[scale=0.7, baseline=5pt]
\draw (0.5,0) -- (4.5,0);
\draw (1,0)--(1,0.8);
\node at (1.6,0.6) {$\cdots$};
\draw (2,0)--(2,0.8) node[above=-2pt]{$w_k$};
\node at (2.6,0.6) {$\cdots$};
\draw (3,0) -- (3,0.8) node[above=-2pt]{$w_l$};
\node at (3.6,0.6) {$\cdots$};
\draw (4,0)--(4,0.8); 
\draw[red, very thick, dotted, ->-] (0.5,0) to[bend left=30] (0.25,0.25) -- (0.25,1.25) to[bend left=30] (0.5,1.5) -- (4.5,1.5) to[bend left=30] (4.75,1.25) -- (4.75,0.25) to[bend left=30] (4.5,0); 
\end{tikzpicture}
\ + \ 
(w_k \wedge w_l) \otimes 
\begin{tikzpicture}[scale=0.7, baseline=5pt]
\draw (0.5,0) node[left=-2pt]{$w_i$} -- (4.5,0) node[right=-2pt]{$w_j$};
\draw (1,0)--(1,0.8);
\node at (1.6,0.6) {$\cdots$};
\draw (2,0)--(2,0.8);
\node at (2.6,0.6) {$\cdots$};
\draw (3,0) -- (3,0.8);
\node at (3.6,0.6) {$\cdots$};
\draw (4,0)--(4,0.8); 
\draw[red, very thick, dotted, ->-] (2,0.8) to[bend left=80] (3,0.8);
\end{tikzpicture}
+ \cdots,  
\]
and $(\Phi \circ \Tr^C_1)(X) = \big( (w_i \wedge w_j)\cdot (w_k \wedge w_l)\big) \otimes Z + \cdots$, where  
\begin{align*}
Z &= \begin{tikzpicture}[scale=0.7, baseline=8pt]
\draw (0.5,0) -- (4.5,0);
\draw (1,0)--(1,0.8);
\node at (1.6,0.6) {$\cdots$};
\draw (2,0)--(2,0.8);
\node at (2.6,0.6) {$\cdots$};
\draw (3,0) -- (3,0.8);
\node at (3.6,0.6) {$\cdots$};
\draw (4,0)--(4,0.8); 
\draw[red, very thick, dotted, ->-] (0.5,0) to[bend left=30] (0.25,0.25) -- (0.25,1.25) to[bend left=30] (0.5,1.5) -- (4.5,1.5) to[bend left=30] (4.75,1.25) -- (4.75,0.25) to[bend left=30] (4.5,0); 
\draw[red, very thick, dotted, ->-] (2,0.8) to[bend left=80] (3,0.8);
\end{tikzpicture}
\ + \ 
\begin{tikzpicture}[scale=0.7, baseline=8pt]
\draw (0.5,0) -- (2.2,0);
\draw (2.8,0) -- (4.5,0);
\draw (1,0)--(1,0.8);
\node at (1.6,0.6) {$\cdots$};
\draw (2,0)--(2,0.8);
\node at (2.6,0.6) {$\cdots$};
\draw (3,0) -- (3,0.8);
\node at (3.6,0.6) {$\cdots$};
\draw (4,0)--(4,0.8); 
\draw (0.5,0) to[bend left=30] (0.25,0.25) -- (0.25,1.25) to[bend left=30] (0.5,1.5) -- (4.5,1.5) to[bend left=30] (4.75,1.25) -- (4.75,0.25) to[bend left=30] (4.5,0); 
\draw[red, very thick, dotted, ->-] (2.8,0) -- (2.2,0);
\draw[red, very thick, dotted, ->-] (2,0.8) to[bend left=80] (3,0.8);
\end{tikzpicture} \\
& \quad + 
\begin{tikzpicture}[scale=0.7, baseline=8pt]
\draw (0.5,0) -- (4.5,0);
\draw (1,0)--(1,0.8);
\node at (1.6,0.6) {$\cdots$};
\draw (2,0)--(2,0.8);
\node at (2.6,0.6) {$\cdots$};
\draw (3,0) -- (3,0.8);
\node at (3.6,0.6) {$\cdots$};
\draw (4,0)--(4,0.8); 
\draw[red, very thick, dotted, ->-] (0.5,0) to[bend left=30] (0.25,0.25) -- (0.25,1.25) to[bend left=30] (0.5,1.5) -- (4.5,1.5) to[bend left=30] (4.75,1.25) -- (4.75,0.25) to[bend left=30] (4.5,0); 
\draw[red, very thick, dotted, ->-] (2,0.8) to[bend left=80] (3,0.8);
\end{tikzpicture}
\ + \ 
\begin{tikzpicture}[scale=0.7, baseline=8pt]
\draw (0.5,0) -- (2.2,0);
\draw (2.8,0) -- (4.5,0);
\draw (1,0)--(1,0.8);
\node at (1.6,0.6) {$\cdots$};
\draw (2,0)--(2,0.8);
\node at (2.6,0.6) {$\cdots$};
\draw (3,0) -- (3,0.8);
\node at (3.6,0.6) {$\cdots$};
\draw (4,0)--(4,0.8); 
\draw[red, very thick, dotted, ->-] (0.5,0) to[bend left=30] (0.25,0.25) -- (0.25,1.25) to[bend left=30] (0.5,1.5) -- (4.5,1.5) to[bend left=30] (4.75,1.25) -- (4.75,0.25) to[bend left=30] (4.5,0); 
\draw[red, very thick, dotted, ->-] (2.8,0) -- (2.2,0);
\draw (2,0.8) to[bend left=80] (3,0.8);
\end{tikzpicture} \ . 
\end{align*}
By (C3), the sum of the second and fourth terms is equal to the first term. 
Hence 
\[
Z = 3 \ 
\begin{tikzpicture}[scale=0.7, baseline=8pt]
\draw (0.5,0) -- (4.5,0);
\draw (1,0)--(1,0.8);
\node at (1.6,0.6) {$\cdots$};
\draw (2,0)--(2,0.8);
\node at (2.6,0.6) {$\cdots$};
\draw (3,0) -- (3,0.8);
\node at (3.6,0.6) {$\cdots$};
\draw (4,0)--(4,0.8); 
\draw[red, very thick, dotted, ->-] (0.5,0) to[bend left=30] (0.25,0.25) -- (0.25,1.25) to[bend left=30] (0.5,1.5) -- (4.5,1.5) to[bend left=30] (4.75,1.25) -- (4.75,0.25) to[bend left=30] (4.5,0); 
\draw[red, very thick, dotted, ->-] (2,0.8) to[bend left=80] (3,0.8);
\end{tikzpicture} \ .
\]
Divided by three, it is exactly the same as the corresponding term in $\Tr^C_2(X)$.
This proves the claim. 
\end{proof}

{\em Step 3}. 
Finally, we deduce the assertion of the proposition from the claim.
Given any homology classes $x_1,\ldots,x_{n+2}$, one can define natural evaluation maps $\h^F(n) \to \h(n)$, $\Omega^F_{1,n} \to \Omega_{1,n}$, and $\Omega^F_{2,n-2} \to \Omega_{2,n-2}$, by replacing $w_p$ with $x_p$ and $w_i \wedge w_j$ with $(x_i \cdot x_j)$.
Any element in $\h(n)$ is a linear combination of elements obtained from the first map for various choices of $x_1, \ldots, x_{n+2}$. 
These evaluation maps intertwine the two versions of the trace maps $\Tr^C_1$, $\Tr^C_2$ and $\Phi$, one defined on $\h^F(n)$ and $\Omega^F_{1,n}$, and the other defined on $\h(n)$ and $\Omega_{1,n}$. 
Thus, the validity of the claim implies the equality $\Phi \circ \Tr^C_1 = 3\, \Tr^C_2$ on $\h(n)$. 
\end{proof}

\begin{remark}
We do not know whether $\Tr^C_r$ for $r\ge3$ factors through $\Tr^C_1$.
\end{remark}

\def\cprime{$'$}

\end{document}